\newcommand{\p}{M}
\newcommand{\B}{\mathcal{B}}
\newcommand{\R}{\mathcal{R}}
\let\oldeq\equation{}\def\equation{\par\vspace{-\parskip}\oldeq}
\newcounter{saveeqn}%
\newenvironment{example}[1][Example 1.]{\begin{trivlist}
\item[\hskip \labelsep {\bfseries #1}]}{\end{trivlist}}
\newenvironment{example2}[1][Example 2.]{\begin{trivlist}
\item[\hskip \labelsep {\bfseries #1}]}{\end{trivlist}}
\newenvironment{example3}[1][Example 3.]{\begin{trivlist}
\item[\hskip \labelsep {\bfseries #1}]}{\end{trivlist}}
\newtheorem{rems}{Remark}
\author{
 Vandana Sharma\footnotemark[2] \and Jeff Morgan\footnotemark[1]}
\begin{document}
\title{Global Existence of solutions to reaction diffusion systems with mass transport type boundary conditions}
\maketitle

\renewcommand{\thefootnote}{\fnsymbol{footnote}}

\footnotetext[2]{Department of Mathematical and Statistical Sciences, Arizona State University, Tempe, USA, AZ 85281. Email: \textup{\nocorr \texttt{vandanas@asu.edu}}.}
\footnotetext[1] {Department of Mathematics, The University of Houston, Houston, USA,  TX 77004. Email: \textup{\nocorr \texttt{jjmorgan@central.uh.edu}}. 
The authors acknowledge the generous support of NSF grant  DMS-0714864. }  
      
\begin{abstract}
We consider a reaction-diffusion system where some components
react and diffuse on the boundary of a region, while other components diffuse in
the interior and react with those on the boundary through mass transport. We establish  local well-posedness and global existence of solutions for these systems using classical
potential theory and linear estimates for initial boundary value problems.
\end{abstract}

\begin{keywords}
 reaction-diffusion equations, mass transport, conservation of mass,  Laplace Beltrami operator, global existence, a priori estimates.
\end{keywords}

\begin{AMS}
35K57, 35B45
\end{AMS}




\section{Introduction}\label{sec:intro}
 The idea that reaction-diffusion phenomena is essential to the growth of living organisms seems quite intuitive. Indeed, it would be rather hard to envision how any organism could grow and operate without moving its constituents around and using them in various bio-chemical reactions  \cite{RefWorks:146}. For example, bacterial cytokinesis is one process which can be modeled by reaction-diffusion systems.  During the bacterial cytokinesis process, a proteinaceous contractile ring assembles in the middle of the cell. The  ring tethers to the membrane and contracts to form daughter cells; that is, the ``cell divides".  One mechanism that centers the ring involves the pole-to-pole oscillation of proteins Min C, Min D and Min E. Oscillations cause the average concentration of Min C, an inhibitor of the ring assembly, to be lowest at the midcell and highest near the poles \cite{RefWorks:142}, \cite{RefWorks:143}. This centering mechanism, relating molecular-level interactions to supra-molecular ring positioning can be modelled as a system of semilinear parabolic equations.  The multi-dimensional version of the evolution  of the Min concentrations can be described as a special case of the reaction-diffusion system
\begin{align}\label{sy15}
 u_t\nonumber&=  D \Delta u+H(u)
 & x\in \Omega, \quad&0<t<T
\\\nonumber v_t&=\tilde D \Delta_{M} v+F(u,v)& x\in M,\quad& 0<t<T\\ D\frac{\partial u}{\partial \eta}&=G(u,v) & x\in M, \quad&0<t<T\\\nonumber
u&=u_0  &x\in\Omega ,\quad& t=0\\\nonumber v&=v_0 & x\in M ,\quad &t=0\end{align}
where $\Omega$ is a bounded domain in $\mathbb{R}^n$, $n\geq 2$, with smooth boundary M, $\Delta$ and $\Delta_M$ denote the Laplace and Laplace Beltrami operators, $\eta$ is the unit outward normal vector to $\Omega$ at points on $M$, and $D$ and $\tilde D$ are $k\times k$ and $m\times m$ diagonal matrices with positive diagonal entries $\lbrace d_j\rbrace_{1\leq j\leq k}$ and $\lbrace\tilde d_i\rbrace_{1\leq i\leq m}$ respectively. $F:\mathbb{R}^k\times \mathbb{R}^m\rightarrow \mathbb{R}^m$,  $G:\mathbb{R}^k\times \mathbb{R}^m\rightarrow \mathbb{R}^k$, $H:\mathbb{R}^k\rightarrow \mathbb{R}^k$, and $u_0 $ and $v_0$ are componentwise nonnegative smooth functions that satisfy the compatibility condition\[ D{\frac{ \partial {u_0}}{\partial \eta}} =G(u_0,v_0)\quad \text{on $M.$}\]
 For this model, $\Omega$ may represent the cell cytoplasm and $M$ may represent its membrane. There are some components that are bound to the membrane, and other components that move freely in the cytoplasm. Also, the components on the membrane and cytoplasm react together on the membrane through mass action and boundary transport. In Section 7, we present two applications associated with ($\ref{sy15}$), with one modeling the chemical reaction involving Min protiens for positioning of the ring, explained in \cite{RefWorks:143}. We point out the study in \cite{RefWorks:142} that also modeled these reactions.

In general, system ($\ref{sy15}$) is somewhat reminiscent of two component systems where both of the unknowns react and diffuse inside $\Omega$, with various homogeneous boundary conditions and nonnegative initial data. In that setting, global well-possedness and uniform boundedness has been studied by many researchers, and we refer the interested reader to the excellent survey of Pierre \cite{ RefWorks:86}.

In the remainder of the introduction, we assume $H=0$ and $k=m=1$. A fundamental mathematical question concerning global existence for ($\ref{sy15}$) asks, what conditions on $F$ and $G$ will guarantee that ($\ref{sy15}$) has global solutions, and how are these conditions related to the results listed in \cite{ RefWorks:86}?  The focus of this paper is to give a partial answer to this question and to apply our results to ($\ref{sy15}$).

From a physical standpoint, it is natural to ask under what conditions the solutions of $(\ref{sy15})$ are nonnegative, and the total mass is either conserved or reduced. It is also important to ask whether these conditions arise in problems similar to the above mentioned cell biology system. Conditions that are similar in spirit to those given in \cite{ RefWorks:110}, \cite{ RefWorks:85} and \cite{ RefWorks:86}  result in nonnegative solutions for system ($\ref{sy15}$). More precisely, ($\ref{sy15}$) has nonnegative solutions for all choices of nonnegative initial data $u_0$  and $v_0$ if and only if $F$, $G$,  and $H$ are quasi-positive. That is $F(a,0), G(0,a)\geq 0$ whenever $a\geq 0$ (recall from above that $H=0$ in the remainder of this introduction). Also, some control of total mass can be achieved by assuming there exists $\alpha>0$ such that \begin{align}\label{mass}
F(u,v)+ G(u,v)&\leq \alpha(u+v+1)\quad\text{ for all }  u, v \geq 0.
\end{align}
Assumption $(\ref{mass})$ (discussed later),  generalizes mass conservation by implying that total mass, $\int_{\Omega} u(x,t) \ dx + \int_{M} v(\zeta,t) \ d\sigma $, grows at most exponentially in time $t$.

 We suspect that the natural conditions, quasipositivity and conservation of mass, are not sufficient to obtain global existence in  ($\ref{sy15}$), and that it is possible to construct an example along the  same lines as constructed in \cite{ RefWorks:124}. To this end, we  impose a condition similar to Morgan's intermediate sums \cite{ RefWorks:120} and  \cite{ RefWorks:123}. Namely,
there exists a constant $K_g>0$ such that \[\quad G(\zeta,\nu)\leq K_g(\zeta+\nu+1)\quad\text{for all}\quad\nu, \zeta \geq 0.\]
In addition, we adopt  a natural assumption of polynomial growth, which has been considered in the context of chemical and biological modeling (see Horn and Jackson \cite{ RefWorks:131}). That is, there exists $ l\in \mathbb{N}$ and $K_f>0$ such that \[\quad F(u,v)\leq K_f( u+ v+1)^l\quad\text{for all}\quad v\geq 0,\  u\geq 0.\] In our analysis, we extend recent results of Huisken and  Polden \cite{ RefWorks:15}, Polden \cite{ RefWorks:16}, and Sharples  \cite{ RefWorks:114} associated with $W^{2,1}_2(M\times(0,T))$ results for solutions to linear Cauchy problems on a membrane. We also verify and make use of a remark of Brown \cite{ RefWorks:81} which states that  if $d>0$ and the Neumann data $\gamma$  lies in $L_{p}(M\times(0,T))$ for $p>n+1$, then the solution to \begin{align}\label{sy3}
 \varphi_t\nonumber&= d\Delta \varphi &
 x\in \Omega,\quad &0<t< T
\\ d\frac{\partial \varphi}{\partial \eta}&=\gamma &x\in M,\quad &0<t< T\\\nonumber
\varphi&= 0 & x\in\Omega ,\quad& t=0
\end{align}is  H\"{o}lder continuous on  $\overline{\Omega}\times(0,  T)$. We provide the proof of this result in section 5 for completeness of our arguments.

Note that the results of Amann \cite{ RefWorks:6} can be used to guarantee the local well posedness of  ($\ref{sy15}$) subject to appropriate conditions on initial data and on the functions $F$ and $G$. However, those results do not  provide the explicit estimates that are needed in our setting. Our approach keeps the analysis on comparatively simpler $L_p$ spaces.

 It is worth mentioning that some of the results in section 5 are valid for domains that are only $C^{1}$. Handling cases with weak smoothness conditions on curves or domain boundaries was one of the motivations for the results obtained in $\cite{RefWorks:81}, \cite{RefWorks:107}, \cite{RefWorks:106}$ and $\cite{RefWorks:105}$ , and these results may be of independent interest.


\section{Notations, Definitions and Preliminary Estimates}
\setcounter{equation}{0}
Throughout this paper, $n\geq 2$ and $\Omega$ is a bounded domain in $\mathbb{R}^n$ with smooth boundary M ($\partial \Omega$) belonging to the class $C^{2+\mu}$ with $\mu>0$ such that $\Omega$ lies locally on one side of its  boundary. $\eta$ is the unit outward normal (from $\Omega$) to $M$, and $\Delta$ and $\Delta_{\p}$ are the Laplace and the Laplace Beltrami operators, respectively. For more details, see Rosenberg \cite{RefWorks:50} and Taylor \cite{RefWorks:62}. In addition, $m, k, n,i$ and $ j$ are positive integers, $D$ and $\tilde D$ are $k\times k$ and $m\times m$ diagonal matrices with positive diagonal entries $\lbrace d_j\rbrace_{1\leq j\leq k}$ and $\lbrace\tilde d_i\rbrace_{1\leq i\leq m}$, respectively. 
\subsection{Basic Function Spaces}
Let $\B$  be a bounded domain on $\mathbb{R}^m$ with smooth boundary such that $\B$ lies locally on one side of $\partial\B$. We define all function spaces on $\B$ and $\B_T=\B\times(0,T)$. 
$L_q(\B)$ is the Banach space consisting of all measurable functions on $\B$ that are $q^{th}(q\geq 1)$ power summable on $\B$. The norm is defined as\[ \Vert u\Vert_{q,\B}=\left(\int_{\B}| u(x)|^q dx\right)^{\frac{1}{q}}\]
Also, \[\Vert u\Vert_{\infty,\B}= ess \sup\lbrace |u(x)|:x\in\B\rbrace.\]
Measurability and summability are to be understood everywhere in the sense of Lebesgue.

If $p\geq 1$, then $W^2_p(\B)$ is the Sobolev space of functions $u:\B\rightarrow \mathbb{R}$ with generalized derivatives, $\partial_x^s u$ (in the sense of distributions) $|s|\leq 2$ belonging to $L_p(\B)$.  Here $s=(s_1,s_2,$...,$s_n),|s|=s_1+s_2+..+s_n$, $|s|\leq2$, and $\partial_x^{s}=\partial_1^{s_1}\partial_2^{s_2}$...$\partial_n^{s_n}$ where $\partial_i=\frac{\partial}{\partial x_i}$. The norm in this space is \[\Vert u\Vert_{p,\B}^{(2)}=\sum_{|s|=0}^{2}\Vert \partial_x^s u\Vert_{p,\B} \]

Similarly, $W^{2,1}_p(\B_T)$ is the Sobolev space of functions $u:\B_T\rightarrow \mathbb{R}$ with generalized derivatives, $\partial_x^s\partial_t^r u$ (in the sense of distributions) where $2r+|s|\leq 2$ and each derivative belonging to $L_p(\B_T)$. The norm in this space is \[\Vert u\Vert_{p,\B_T}^{(2)}=\sum_{2r+|s|=0}^{2}\Vert \partial_x^s\partial_t^r u\Vert_{p,\B_T} \] In addition to $W^{2,1}_p(\B_T)$, we will encounter other spaces with two different ratios of upper indices, 
$W_2^{1,0}(\B_T)$, $W_2^{1,1}(\B_T)$, $V_2(\B_T)$, $V_2^{1,0}(\B_T)$, and $V_2^{1,\frac{1}{2}}(\B_T)$ as defined in \cite{RefWorks:65}.

We also introduce $W^l_p(\B)$, where $l>0$ is not an integer, because initial data will be taken from these spaces. The space $W^l_p(\B)$ with nonintegral $l$, is a Banach space consisting of elements of $W^{[l]}_p$ ([$l$] is the largest integer less than $ l$) with the finite norm\[\Vert u\Vert_{p,\B}^{(l)} =\langle u\rangle_{p,\B}^{(l)}+\Vert u\Vert_{p,\B}^{([l])} \]
where  \[\Vert u\Vert_{p,\B}^{([l])}=\sum_{s=0}^{[l]}\Vert \partial_x^s u\Vert_{p,\B} \] and 
\[\langle u\rangle_{p,\B}^{(l)}=\sum_{s=[l]}\left(\int_\B dx\int_\B{|\partial_x^s u(x)-\partial_y^s u(y)|}^p.\frac{dy}{|x-y|^{n+p(l-[l])}}\right)^\frac{1}{p}\]
$W^{l,\frac{l}{2}}_p(\partial\B_T)$ spaces with non integral $l$ also play an important role in the study of boundary value problems with nonhomogeneous boundary conditions, especially in the proof of exact estimates of their solutions.  It is a Banach space when $p\geq 1$, which is defined by means of parametrization of the surface $\partial\B$. For a rigorous treatment of these spaces, we refer the reader to page 81 of Chapter 2 of \cite{RefWorks:65}.

The use of the spaces $W^{l,\frac{l}{2}}_p(\partial\B_T)$ is connected to the fact that the differential properties of the boundary values of functions from $W^{2,1}_p(\B_T)$ and of certain of its derivatives, $\partial_x^s\partial_t^r$, can be exactly described in terms of the spaces $W^{l,\frac{l}{2}}_p(\partial\B_T)$, where $l=2-2r-s-\frac{1}{p}$.

For $0<\alpha,\beta<1$,  $C^{\alpha,\beta}(\overline{\B_T})$ is the Banach space of H\"{o}lder continuous functions $u$ with the finite norm \[ |u|^{(\alpha)}_{\overline\B_T}= \sup_{(x,t)\in{\B_T}} |u(x,t)|+[u]^{(\alpha)}_{x,\B_T}+[ u]^{(\beta)}_{t,\B_T}\]
where \[ [u]^{(\alpha)}_{x, {\overline\B_T}}= \sup_{\substack{(x,t),(x',t)\in {\B_T}\\ x\ne x'}}\frac{|u(x,t)-u(x',t)|}{|x-x'|^{\alpha}}\] and \[  [u]^{(\beta)}_{t, {\overline\B_T}}= \sup_{\substack{(x,t),(x,t')\in {\B_T}\\ t\ne t'}}\frac{|u(x,t)-u(x,t')|}{|t-t'|^\beta}\]
We shall denote the space $C^{\frac{\alpha}{2},\frac{\alpha}{2}}(\overline\B_T)$ by $C^{\frac{\alpha}{2}}(\overline\B_T)$. $ C(\B_T,\mathbb{R}^n)$ is the set of all continuous functions $u: \B_T \rightarrow \mathbb{R}^n$, and
$ C^{1,0}(\B_T,\mathbb{R}^n)$ is the set of all continuous functions $u: \B_T\rightarrow \mathbb{R}^n$ for which $ u_{x_i}$ is continuous for all $1\leq i\leq n$.
$ C^{2,1}(\B_T,\mathbb{R}^n)$ is the set of all continuous functions $u: \B_T \rightarrow \mathbb{R}^n$ having continuous derivatives $u_{x_i},u_{{x_i}{x_j}}\ \text{and}\ u_t$ in $\B_T$.
Note that similar definitions can be given on $\overline\B_T$. Moreover notations and definitions for H\"{o}lder and Sobolev Spaces on manifolds  are similar to the ones used in the Handbook of Global analysis \cite{ RefWorks:71}.
More developments on Sobolev spaces, Sobolev inequalities, and the notion of best constants may be found in \cite{ RefWorks:133}, \cite{ RefWorks:135}, \cite{ RefWorks:111} and \cite{ RefWorks:62}. 
\subsection{Preliminary Estimates}
For completeness of  our arguments, we state the following results, which will help us obtain a priori estimates for the Cauchy problem on the manifold $M$, and prove the existence of solutions in $W^{2,1}_p(M_T)$. Lemmas $\ref{L1}$, $\ref{i}$  and $\ref{L3}$ can be found on page 341, Chapter 4  in \cite{RefWorks:65}, as $(2.24)$ and $(2.25)$ on page 49 in \cite{RefWorks:69}, and \cite{RefWorks:111} respectively. Lemma $\ref{Hol}$ is stated as Lemma 3.3 in Chapter 2 of \cite{RefWorks:65}.

Let $\mathcal{B}$ be a  bounded domain in $\mathbb{R}^m$ with smooth boundary $\partial\mathcal{B}$ belonging to the class $C^{2+\mu}$ with $\mu>0$ such that $\mathcal{B}$ lies locally on one side of the boundary $\partial\mathcal{B}$. Let $T>0$ and $p>1$. Suppose $\Theta\in L_{p}(\B_T)$, $w_0\in W_p^{2}(\B)$, $\gamma\in {W_{p}}^{2-\frac{1}{p},1-\frac{1}{2p}}(\partial\B_T)$. Also, let the coefficient matrix $(a_{i,j})$ be symmetric and continuous on $\overline{\B_T}$, and satisfy the uniform ellipticity condition. That is for some $ \lambda>0$ \[ \sum\limits_{i,j=1}^n a_{ij}(x,t)\xi_i\xi_j\geq \lambda |\xi|^2\  \text{for all}\  (x,t)\in \overline{\B_T} \ \text{and for all}\  \xi\in \mathbb{R}^n \]Finally, let the coefficients $a_i$ be continuous on $\overline {\B_T}$. Consider the problem
\begin{eqnarray}\label{m1}
  \frac{\partial w }{\partial t}  - \displaystyle\sum\limits_{i,j=1}^{n} a_{ij}(x,t)\frac{\partial^{2}w}{\partial x_{i}\partial x_{j}}+\displaystyle\sum\limits_{i=1}^{n} a_{i}(x,t)\frac{\partial w}{\partial x_{i}}\nonumber &=&\Theta(x,t) \hspace{.3in} (x,t)\in \B_T\quad\quad\quad\\
 w&=&\gamma(x,t) \hspace{.3in}(x,t)\in \partial\B_T\\ \nonumber
{ w\big|}_{t=0} &=& {w_0}(x) \hspace{.5in} x \in \B\nonumber
\end{eqnarray}
\begin{lemma}\label{L1}
Let p $> 1$ with $p\neq\frac{3}{2}$, and in the case $p>\frac{3}{2}$, assume the compatibility condition of zero order, $w_0|_{\partial\B}=\gamma|_{t=0}$. Then $(\ref{m1})$ has a unique solution $ w\in{W_{p}}^{2,1}{(\B_T)}$, and there exists $C>0$ depending on $T, p$ and $\B$, and independent of $\Theta, w_0$ and $\gamma$ such that
\[{\Vert w\Vert}_{p,\B_T}^{(2)}\leq C({\Vert \Theta\Vert}_{p,\B_T}+{\Vert w_0\Vert}_{p,\B}^{(2-\frac{2}{p})}+{\Vert \gamma\Vert}_{p, \partial\B_T}^{(2-\frac{1}{p},1-\frac{1}{2p})})\]\end{lemma}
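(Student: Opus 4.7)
The plan is to prove this by reducing to the case of zero initial and boundary data, and then appealing to the classical $L_p$ theory for parabolic equations on bounded domains. The overall strategy mirrors the standard Ladyzhenskaya--Solonnikov--Ural'tseva approach: first lift the non-homogeneous data into an element of $W_p^{2,1}(\B_T)$, subtract the lift so that the remainder solves a problem with zero initial and zero Dirichlet data, and then apply the a priori estimate together with the existence theory for the reduced problem.

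More concretely, I would begin by using the trace/extension theorem for anisotropic Sobolev spaces: since $\gamma \in W_p^{2-1/p,1-1/(2p)}(\partial \B_T)$, there exists an extension operator producing a function $\Phi_1 \in W_p^{2,1}(\B_T)$ with $\Phi_1|_{\partial\B_T} = \gamma$ and $\|\Phi_1\|_{p,\B_T}^{(2)} \le C\,\|\gamma\|_{p,\partial\B_T}^{(2-1/p,1-1/(2p))}$. Similarly, since $w_0\in W_p^{2-2/p}(\B)$, there is a time-independent or carefully chosen $\Phi_2 \in W_p^{2,1}(\B_T)$ with $\Phi_2|_{t=0}=w_0$. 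For $p>3/2$ the trace of $w_0$ on $\partial \B$ is well-defined, and the zero-order compatibility condition $w_0|_{\partial\B} = \gamma|_{t=0}$ is exactly what allows $\Phi_1$ and $\Phi_2$ to be combined into a single lift $\Phi \in W_p^{2,1}(\B_T)$ that matches both the initial and boundary data; for $1<p<3/2$ the boundary trace of $w_0$ is not defined in a strong sense and no compatibility is required. Writing $w = \Phi + u$, the function $u$ must solve the same parabolic equation with modified right-hand side $\widetilde{\Theta}:=\Theta - \Phi_t + \sum a_{ij}\partial_{ij}\Phi - \sum a_i \partial_i \Phi \in L_p(\B_T)$, with $u|_{t=0}=0$ and $u|_{\partial\B_T}=0$.

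For the reduced problem with homogeneous initial and boundary data, I would rely on the classical strategy of (i) localization via a finite partition of unity subordinate to a covering of $\overline{\B}$ by interior balls and flattened boundary patches, (ii) freezing the continuous coefficients $(a_{ij}, a_i)$ at the center of each patch, thereby reducing matters to the constant-coefficient heat equation on $\mathbb{R}^n$ or on a half-space, and (iii) using the sharp Calder\'on--Zygmund-type bounds for the heat potential and single-layer parabolic potential on the half-space to obtain the estimate $\|u\|_{p,\B_T}^{(2)} \le C\|\widetilde{\Theta}\|_{p,\B_T}$. The continuity in $(x,t)$ of the coefficients, together with the smallness of the localized patches, allows the perturbation terms introduced by freezing to be absorbed into the left-hand side. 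Existence of $u$ then follows either by the method of continuity, linking the variable-coefficient operator to the heat operator through a one-parameter family, or by Galerkin/approximation and passage to the limit using the a priori estimate. Uniqueness is a direct consequence of the same estimate applied to the difference of two solutions.

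The main technical obstacle is the simultaneous lifting of the initial and boundary data in the borderline regime $p>3/2$, since the extension $\Phi$ must have the correct trace on $\partial\B_T$ and at $t=0$, and the resulting $\Phi_t$ and $\Delta\Phi$ must remain in $L_p(\B_T)$ with norm bounded linearly by the data. This is exactly where the compatibility condition $w_0|_{\partial\B}=\gamma|_{t=0}$ and the exclusion of $p=3/2$ become essential: at $p=3/2$ the trace space of $W_p^{2,1}$ at the corner $\partial\B \times \{0\}$ is critical and an unconditional lift fails. Once the combined lift $\Phi$ is constructed with the appropriate norm bound, assembling the final estimate is routine: summing the bounds on $\Phi$ and on $u$ yields $\|w\|_{p,\B_T}^{(2)} \le C(\|\Theta\|_{p,\B_T} + \|w_0\|_{p,\B}^{(2-2/p)} + \|\gamma\|_{p,\partial\B_T}^{(2-1/p,1-1/(2p))})$, with $C$ depending on $T$, $p$ and $\B$ through the norm of the extension operator, the ellipticity constant $\lambda$, and the moduli of continuity of $a_{ij}$ and $a_i$ on $\overline{\B_T}$.
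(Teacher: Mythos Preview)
Your sketch is a faithful outline of the classical Ladyzhenskaya--Solonnikov--Ural'tseva argument (Theorem~9.1 in Chapter~IV of \cite{RefWorks:65}): lift the boundary and initial data into $W_p^{2,1}(\B_T)$ using the trace/extension theory (which is where the exclusion $p\neq 3/2$ and the zero-order compatibility for $p>3/2$ enter), reduce to homogeneous data, and then obtain the a~priori estimate by localization, boundary flattening, coefficient freezing, and the half-space heat-kernel estimates, followed by existence via the method of continuity. There is no mathematical gap in your plan.

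However, the paper does not actually prove this lemma. It is stated in Section~2.2 as a preliminary estimate and is explicitly attributed to \cite{RefWorks:65} (page~341, Chapter~4) without proof; the authors use it as a black box in their arguments on the manifold $M$. So rather than differing in method, the situation is that the paper \emph{cites} precisely the result whose proof you have sketched. Your outline is essentially a compressed version of the argument in the cited reference, so it is consistent with---but considerably more detailed than---what the paper itself provides.
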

\begin{lemma}\label{Hol}
Suppose $q\geq p$, $2-2r-s-\left(\frac{1}{p}-\frac{1}{q}\right)(m+2)\geq 0$ and $0<\delta\leq \min\lbrace d;\sqrt{T}\rbrace$. Then there exists $c_1, c_2>0$ depending on $r, s, m, p$ and $\B$ such that 
\[ \Vert D_t^r D_x^s u\Vert _{q,\B_T}\leq c_1 \delta^{2-2r-s-\left(\frac{1}{p}-\frac{1}{q}\right)(m+2)}\Vert u\Vert^{(2)}_{p,\B_T}+ c_2 \delta ^{-(2r+s+\left(\frac{1}{p}-\frac{1}{q}\right)(m+2))}\Vert u\Vert_{p,\B_T}\] for all $u\in W_p^{2,1}(\mathcal{B_T})$. Moreover, if $2-2r-s-\frac{(m+2)}{p}>0$, then for $0\leq \alpha<2-2r-s-\frac{(m+2)}{p}$ there exist constants $c_3, c_4$ depending on $r, s, m, p$ and $\B$ such that \[ |D_t^r D_x^s u|^{(\alpha)}_{\B_T}\leq c_3 \delta ^{2-2r-s-\frac{m+2}{p}-\alpha}\Vert u\Vert^{(2)}_{p,\B_T}+ c_4 \delta^{-(2r+s+\frac{(m+2)}{p}+\alpha)}\Vert u\Vert_{p,\B_T}\] for all $u \in W_p^{2,1}(\mathcal{B_T})$.
\end{lemma}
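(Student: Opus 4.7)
Both estimates are parabolic Gagliardo--Nirenberg interpolation inequalities, and the parameter $\delta$ arises from Young's inequality applied to an underlying scale--invariant multiplicative bound. The plan is to reduce to the whole space, establish the multiplicative form there, and then introduce $\delta$ through Young. Since $\partial\B\in C^{2+\mu}$, one has a bounded parabolic extension operator $E:W^{2,1}_p(\B_T)\to W^{2,1}_p(\mathbb{R}^m\times\mathbb{R})$ satisfying $\|Eu\|^{(2)}_p+\|Eu\|_p\leq C(\B)\bigl(\|u\|^{(2)}_{p,\B_T}+\|u\|_{p,\B_T}\bigr)$, so both claims reduce to the case $\B=\mathbb{R}^m$, $T=\infty$.

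On $\mathbb{R}^{m+1}$ the parabolic scaling $u_\lambda(x,t)=u(\lambda x,\lambda^2 t)$ identifies the unique scale--invariant interpolation inequality as
\[
\|D_t^r D_x^s u\|_{q}\leq C\bigl(\|u\|^{(2)}_p\bigr)^{\theta}\|u\|_p^{1-\theta},\qquad \theta=\tfrac{1}{2}\bigl(2r+s+(\tfrac{1}{p}-\tfrac{1}{q})(m+2)\bigr),
\]
the hypothesis $2-2r-s-(1/p-1/q)(m+2)\geq 0$ being exactly $\theta\leq 1$. This multiplicative bound follows from parabolic Fourier multiplier estimates on anisotropic Bessel potentials, or equivalently by iterating the classical one--derivative Gagliardo--Nirenberg inequality in $x$ and in $t$ with time carrying the parabolic weight $2$. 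Then for any $a,b>0$ and $x>0$ Young's inequality gives $a^{\theta}b^{1-\theta}\leq \theta xa+(1-\theta)x^{-\theta/(1-\theta)}b$; choosing $x=\delta^{2(1-\theta)}$ turns the multiplicative bound into
\[
\|D_t^r D_x^s u\|_q\leq c_1\delta^{2(1-\theta)}\|u\|^{(2)}_p+c_2\delta^{-2\theta}\|u\|_p,
\]
which is the first stated inequality, since $2(1-\theta)=2-2r-s-(1/p-1/q)(m+2)$ and $2\theta=2r+s+(1/p-1/q)(m+2)$. The restriction $\delta\leq\min\{d,\sqrt T\}$ is used only to keep the extension constants and the domain geometry controlled at the scale $\delta$.

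For the H\"older statement, when $\kappa:=2-2r-s-(m+2)/p>0$ a parabolic Morrey--type embedding---obtainable by representing $u$ through the fundamental solution of the heat operator and estimating parabolic difference quotients of $D_t^r D_x^s u$---furnishes the multiplicative bound $|D_t^r D_x^s u|^{(\alpha)}\leq C(\|u\|^{(2)}_p)^{\theta'}\|u\|_p^{1-\theta'}$ with $\theta'=\tfrac{1}{2}(2r+s+\alpha+(m+2)/p)$, the exponent again forced by parabolic scaling since the parabolic H\"older seminorm of $D_t^r D_x^s u_\lambda$ scales like $\lambda^{2r+s+\alpha}$. Young's inequality with $x=\delta^{2(1-\theta')}$ then yields the second bound, with the correct powers $2(1-\theta')=2-2r-s-(m+2)/p-\alpha$ and $-2\theta'=-(2r+s+(m+2)/p+\alpha)$. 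The main obstacle is the multiplicative estimate itself, especially its H\"older variant, where the anisotropic bookkeeping between spatial derivatives and the parabolically weighted time derivative must be executed carefully; the extension, scaling, and Young steps are then routine.
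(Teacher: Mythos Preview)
The paper does not supply a proof of this lemma at all: immediately before the statement it says the result ``is stated as Lemma 3.3 in Chapter 2 of \cite{RefWorks:65}'' (Ladyzhenskaya--Solonnikov--Uraltseva), and no argument is given. So there is no ``paper's own proof'' to compare against; the authors are simply quoting a standard parabolic interpolation inequality.

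Your outline is a legitimate route to the result and recovers the correct exponents via the scaling heuristic and Young's inequality. It is worth noting that the original LSU proof proceeds differently: rather than extending to the whole space and invoking Fourier multipliers, they cover $\B_T$ by parabolic cylinders of radius $\delta$ and use explicit integral representation formulas (essentially local Sobolev--Morrey estimates) on each piece, then sum. The constraint $\delta\leq\min\{d,\sqrt{T}\}$ in their setting controls the size of the covering cylinders relative to the domain, not an extension operator. Your extension-based approach is cleaner conceptually but hides more in the black boxes (existence of a simultaneous $L_p$-and-$W^{2,1}_p$ bounded parabolic Stein extension, and the anisotropic Mikhlin/Bessel-potential machinery you invoke for the multiplicative bound); the LSU argument is more hands-on but self-contained. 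Either way, since the paper only cites the result, your sketch more than suffices as justification.
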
\\
\begin{corollary}
Suppose the conditions of Lemma $\ref {L1}$ are fulfilled and  $p>\frac{m+2}{2}$. Then there exists $\hat c>0$ depending on $m, p$ and $\B$ such that the solution of problem $(\ref{m1})$ is H\"{o}lder continuous, and \[|w|^{(2-\frac{m+2}{p})}_{\B_T}\leq \hat c \Vert w\Vert^{(2)}_{p,\B_T}\] 
\end{corollary}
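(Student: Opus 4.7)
The plan is to derive this corollary as an essentially immediate consequence of Lemma \ref{Hol}. First, Lemma \ref{L1} guarantees that under the stated hypotheses the problem $(\ref{m1})$ has a unique solution $w \in W_{p}^{2,1}(\B_T)$, so the norm $\|w\|_{p,\B_T}^{(2)}$ on the right-hand side is finite and well-defined. It then remains to show that this $W_{p}^{2,1}$ norm controls a Hölder norm of $w$ itself, which is exactly the content of the second inequality in Lemma \ref{Hol} with $r=0$ and $s=0$.

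Concretely, I would take $r = s = 0$ in Lemma \ref{Hol}. The condition $2 - 2r - s - \frac{m+2}{p} > 0$ reduces to $p > \frac{m+2}{2}$, which is precisely our hypothesis, so the critical Hölder exponent $2 - \frac{m+2}{p}$ is positive. For any admissible $\alpha < 2 - \frac{m+2}{p}$, Lemma \ref{Hol} then yields
\[
|w|^{(\alpha)}_{\B_T} \leq c_3 \, \delta^{2 - \frac{m+2}{p} - \alpha} \|w\|_{p,\B_T}^{(2)} + c_4 \, \delta^{-(\frac{m+2}{p} + \alpha)} \|w\|_{p,\B_T}.
\]
Next, I fix $\delta = \min\{d,\sqrt{T}\}$ (a constant depending only on $\B$ and $T$) so that both $\delta$-powers become numerical constants. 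Finally, since $\|w\|_{p,\B_T} \leq \|w\|_{p,\B_T}^{(2)}$ by the very definition of the $W_{p}^{2,1}$ norm, both terms on the right are dominated by a constant multiple of $\|w\|_{p,\B_T}^{(2)}$. Collecting these constants into a single $\hat c$ depending on $m$, $p$, $\B$ (and implicitly $T$ through $\delta$) gives the asserted estimate.

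The only subtle point I foresee is the endpoint exponent $\alpha = 2 - \frac{m+2}{p}$ as written in the corollary's norm $|w|_{\B_T}^{(2 - \frac{m+2}{p})}$, while Lemma \ref{Hol} is stated for $\alpha$ strictly less. I would address this either by interpreting the notation as allowing any fixed $\alpha$ strictly below the critical threshold (the usual convention in this setting, with the constant $\hat c$ allowed to depend on that choice), or, if the statement is taken literally at the endpoint, by appealing to a standard refinement of Lemma \ref{Hol} in which Sobolev embedding into Hölder spaces is attained at the critical exponent for $p > (m+2)/2$. In either reading the inequality follows from the same two-step scheme: $W_{p}^{2,1}(\B_T)$ existence from Lemma \ref{L1}, followed by the embedding inequality of Lemma \ref{Hol}.
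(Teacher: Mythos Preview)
Your approach is correct and matches the paper's intended argument: the corollary is stated immediately after Lemma~\ref{Hol} with no proof, so it is meant as a direct consequence of Lemma~\ref{L1} (giving $w\in W^{2,1}_p(\B_T)$) together with the second inequality of Lemma~\ref{Hol} applied with $r=s=0$. Your handling of the endpoint exponent is also appropriate; the paper's statement at $\alpha=2-\tfrac{m+2}{p}$ is a mild notational looseness, and in its later applications only strictly smaller H\"older exponents are actually used.
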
\\
\begin{lemma}\label{i}
Suppose $1<p<\infty$. If $p< m$ then $ W^1_p(\B)$ embedds continuously into $W_p^{(1-\frac{1}{p} )}(\partial\B)$ and $L_q(\B)$ for $p\leq q\leq p^*=\frac{mp}{m-p}$. Furthermore, if $\epsilon>0$ there exists $C_{\epsilon}>0$ such that  
 \[{\Vert v\Vert}^p_{q,\B}\nonumber \leq \epsilon {\Vert  v_{x}\Vert}^p_{p,\B }+C_{\epsilon}{\Vert  v\Vert}^p_{1,\B }\] for all $v\in W^1_p(\B)$, and 
\[\Vert v\Vert^2_{2,\partial\B}\leq \epsilon {\Vert  v_{x}\Vert}^2_{2,\B }+C_{\epsilon}{\Vert  v\Vert}^2_{2,\B }\] for all $v\in W^1_2(\B)$.
\end{lemma}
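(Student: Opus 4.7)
The plan is to derive the two continuous embeddings first via classical Sobolev and trace theorems, and then to obtain the interpolation--absorption inequalities by combining them with H\"older's inequality and Young's inequality.

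For the embedding $W^1_p(\B) \hookrightarrow L_q(\B)$ with $p \leq q \leq p^{*} := mp/(m-p)$, I would start from the Gagliardo--Nirenberg--Sobolev inequality $\Vert v\Vert_{p^{*},\B} \leq C(\Vert v_{x}\Vert_{p,\B} + \Vert v\Vert_{p,\B})$, which is valid on the bounded smooth domain $\B$ via a standard extension argument. H\"older's inequality then interpolates $L_q$ between $L_p$ and $L_{p^{*}}$, yielding the full range. For the trace embedding $W^1_p(\B) \hookrightarrow W^{1-1/p}_p(\partial\B)$, I would use a smooth partition of unity to localize near $\partial\B$, flatten the boundary in each chart, and invoke the model trace theorem $W^1_p(\mathbb{R}^m_{+}) \to W^{1-1/p}_p(\mathbb{R}^{m-1})$, which is proved by estimating the fractional Sobolev seminorm of the trace via the fundamental theorem of calculus in the normal direction.

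For the first interpolation inequality, the strategy is to first upgrade the Sobolev bound to one that involves only $\Vert v_{x}\Vert_{p,\B}$ and $\Vert v\Vert_{1,\B}$. Writing $\Vert v\Vert_{p,\B} \leq \Vert v\Vert_{1,\B}^{\theta}\Vert v\Vert_{p^{*},\B}^{1-\theta}$ with the H\"older exponent $\theta \in (0,1)$ determined by $1/p = \theta + (1-\theta)/p^{*}$, then applying Young's inequality with a small parameter, I can absorb the $L_{p^{*}}$ term on the left and obtain $\Vert v\Vert_{p^{*},\B} \leq C'(\Vert v_{x}\Vert_{p,\B} + \Vert v\Vert_{1,\B})$. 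A second H\"older interpolation $\Vert v\Vert_{q,\B} \leq \Vert v\Vert_{1,\B}^{\alpha}\Vert v\Vert_{p^{*},\B}^{1-\alpha}$, raised to the $p$-th power and combined with Young's inequality tuned to the prescribed $\epsilon$, yields the claimed bound.

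For the trace interpolation $\Vert v\Vert^2_{2,\partial\B} \leq \epsilon \Vert v_{x}\Vert^2_{2,\B} + C_{\epsilon}\Vert v\Vert^2_{2,\B}$, I would argue by density from $C^\infty(\overline{\B})$ and pick a smooth vector field $\xi$ on $\overline{\B}$ with $\xi\cdot\eta = 1$ on $\partial\B$. The divergence theorem gives
\[
\int_{\partial\B} v^2 \, d\sigma = \int_{\B} \bigl(2 v\, v_{x}\cdot\xi + v^2\,\operatorname{div}\xi\bigr)\, dx,
\]
and Cauchy--Schwarz with weight $\epsilon$ applied to the cross term produces the stated inequality. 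The main obstacle is not any single step but bookkeeping: one must verify that the constants depend only on $\B$ and $\epsilon$ (through $\Vert\xi\Vert_{\infty}$ and $\Vert\operatorname{div}\xi\Vert_{\infty}$) and that the two successive H\"older interpolations in the $L_q$ estimate use compatible exponents for every $q$ in the full range $p\leq q\leq p^{*}$; once these are chosen consistently, the remaining manipulations are routine.
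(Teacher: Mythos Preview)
Your proof sketch is correct and follows standard routes (Gagliardo--Nirenberg--Sobolev plus H\"older/Young for the $L_q$ inequality, and the divergence-theorem trick with an auxiliary vector field for the $L_2$ trace inequality). However, there is nothing to compare against: the paper does not prove this lemma at all. It is stated as a preliminary estimate and attributed directly to inequalities~(2.24) and~(2.25) on page~49 of Ladyzhenskaya--Ural'tseva, \emph{Linear and Quasilinear Elliptic Equations} (the paper's reference~\cite{RefWorks:69}); see the sentence immediately preceding Lemma~\ref{L1}. So your write-up supplies an argument where the paper only supplies a citation.

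One small caveat worth flagging in your version: at the endpoint $q=p^{*}$ your second H\"older interpolation has exponent $\alpha=0$, so Young's inequality no longer yields an arbitrarily small coefficient in front of $\Vert v_x\Vert_{p,\B}^{p}$. A scaling/concentration argument shows the $\epsilon$-form genuinely fails at $q=p^{*}$, so the stated range should be read as $p\le q<p^{*}$ for the $\epsilon$-inequality (the embedding itself is of course fine up to $p^{*}$). This is a wrinkle in the lemma's statement rather than in your method, and it does not affect any of the places in the paper where Lemma~\ref{i} is invoked, since those uses are all strictly subcritical.
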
\\
\begin{lemma}\label{L3.5}
Let $p>m$ and $0<\alpha<1-\frac{m}{p}$. Then $W^{1}_p(\B)$ embedds compactly in $ C^{\alpha}(\overline\B)$. 
\end{lemma}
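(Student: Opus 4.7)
The plan is to combine Morrey's inequality with an Arzel\`a--Ascoli argument and an interpolation step in H\"older scales.

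First, since $\B$ has $C^{2+\mu}$ boundary (and in particular satisfies the cone condition), the classical Morrey embedding applies in the exponent range $p>m$, giving a continuous inclusion $W^{1}_p(\B)\hookrightarrow C^{0,\beta}(\overline{\B})$ with $\beta=1-\tfrac{m}{p}>0$. In particular there is a constant $K$ such that
\[
\|u\|_{\infty,\B}+[u]^{(\beta)}_{x,\overline{\B}}\le K\|u\|_{p,\B}^{(1)}\qquad \text{for all }u\in W^{1}_p(\B).
\]

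Second, I would take an arbitrary bounded sequence $\{u_j\}\subset W^{1}_p(\B)$. By the inequality above, the $u_j$ are uniformly bounded and equicontinuous on the compact set $\overline{\B}$, so Arzel\`a--Ascoli produces a subsequence (still denoted $u_j$) converging uniformly to some $u\in C(\overline{\B})$. The uniform $C^{0,\beta}$ bound is preserved in the uniform limit, so $u\in C^{0,\beta}(\overline{\B})$.

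Third, I would upgrade the uniform convergence to convergence in $C^{0,\alpha}(\overline{\B})$ for the given $\alpha\in(0,\beta)$ using a standard interpolation trick. For $w:=u_j-u$,
\[
\frac{|w(x)-w(y)|}{|x-y|^{\alpha}}\le \min\!\Bigl(2\|w\|_{\infty,\B}\,|x-y|^{-\alpha},\; [w]^{(\beta)}_{x,\overline{\B}}\,|x-y|^{\beta-\alpha}\Bigr),
\]
and optimizing over $|x-y|$ gives
\[
[w]^{(\alpha)}_{x,\overline{\B}}\le \bigl(2\|w\|_{\infty,\B}\bigr)^{1-\alpha/\beta}\bigl([w]^{(\beta)}_{x,\overline{\B}}\bigr)^{\alpha/\beta}.
\]
The H\"older seminorm $[w]^{(\beta)}_{x,\overline{\B}}$ stays bounded (by twice the uniform $C^{0,\beta}$ bound coming from Morrey), while $\|w\|_{\infty,\B}\to 0$, so $[w]^{(\alpha)}_{x,\overline{\B}}\to 0$. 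Together with uniform convergence this yields $u_j\to u$ in $C^{0,\alpha}(\overline{\B})$, establishing compactness.

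There is no serious obstacle here beyond verifying that $\B$'s regularity (stated in the paper as $C^{2+\mu}$ with $\B$ locally on one side of $\partial\B$) is sufficient for Morrey's inequality to apply; the remainder is Arzel\`a--Ascoli plus the H\"older interpolation bound above. Since Morrey's embedding and the interpolation inequality are both standard, the result reduces to assembling these two ingredients.
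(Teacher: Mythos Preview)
Your argument is correct: Morrey's inequality gives the continuous embedding into $C^{0,\beta}(\overline{\B})$ with $\beta=1-\frac{m}{p}$, Arzel\`a--Ascoli extracts a uniformly convergent subsequence, and the H\"older interpolation inequality upgrades this to convergence in $C^{0,\alpha}$ for any $\alpha<\beta$. The regularity assumed on $\B$ (a bounded $C^{2+\mu}$ domain lying locally on one side of its boundary) is more than enough for Morrey's embedding.

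There is nothing to compare with in the paper itself: Lemma~\ref{L3.5} is stated in the preliminaries section without proof, alongside several other standard Sobolev-type results that the authors simply cite from Lady\v{z}enskaja--Solonnikov--Ural'ceva, Lady\v{z}enskaja--Ural'ceva, and Hebey. Your write-up supplies exactly the classical argument one would give for this embedding.
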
\\
\begin{lemma}\label{L3}
Let $M$ be a compact Riemannian manifold of dimension $m\geq 1$ and $p>m$. Then the embedding $W^{1}_p(M)\subset C^{\alpha}(M)$ is compact for all  $0<\alpha<1-\frac{m}{p}$.
\end{lemma}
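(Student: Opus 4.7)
The plan is to reduce the manifold statement to the Euclidean statement already recorded in Lemma~\ref{L3.5} via a standard localization. Since $M$ is compact and $C^{2+\mu}$, I would first cover $M$ by finitely many coordinate charts $\{(U_i,\phi_i)\}_{i=1}^N$, where each $\phi_i:U_i\to V_i$ is a smooth diffeomorphism onto a bounded domain $V_i\subset\mathbb{R}^m$ with smooth boundary of the type considered in Lemma~\ref{L3.5}. I would then choose a smooth partition of unity $\{\chi_i\}$ subordinate to $\{U_i\}$, so that any $u\in W^1_p(M)$ decomposes as $u=\sum_{i=1}^N \chi_i u$ with $\operatorname{supp}(\chi_i u)\Subset U_i$.

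Next I would transport each piece to Euclidean space by setting $u_i:=(\chi_i u)\circ\phi_i^{-1}\in W^1_p(V_i)$. Because the metric components, the Jacobians of $\phi_i$ and $\phi_i^{-1}$, and the derivatives of $\chi_i$ are all uniformly bounded above and below on the compact sets in question, there exist constants $c,C>0$ (independent of $u$) such that
\begin{equation*}
c\,\|u\|_{W^1_p(M)}\le \sum_{i=1}^N \|u_i\|_{W^1_p(V_i)}\le C\,\|u\|_{W^1_p(M)}.
\end{equation*}
An analogous equivalence holds for the H\"older seminorms: the intrinsic distance on $M$ is locally comparable to the Euclidean distance in each chart, so
\begin{equation*}
c'\,\|u\|_{C^{\alpha}(M)}\le \sum_{i=1}^N \|u_i\|_{C^{\alpha}(\overline{V_i})}\le C'\,\|u\|_{C^{\alpha}(M)}.
\end{equation*}

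With these two equivalences in hand, the proof reduces to a diagonal-subsequence argument. Given a bounded sequence $\{u^{(k)}\}\subset W^1_p(M)$, each localized sequence $\{u_i^{(k)}\}_k$ is bounded in $W^1_p(V_i)$. Since $p>m$ and $0<\alpha<1-m/p$, Lemma~\ref{L3.5} yields a subsequence along which $u_1^{(k)}$ converges in $C^{\alpha}(\overline{V_1})$; passing to successive subsequences for $i=2,\dots,N$ and then diagonalizing produces a single subsequence along which every $u_i^{(k)}$ converges in $C^{\alpha}(\overline{V_i})$. The upper bound in the H\"older equivalence above then forces $u^{(k)}$ itself to converge in $C^{\alpha}(M)$, which is exactly compactness of the embedding.

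The main obstacle, which is really the only non-bookkeeping point, is justifying the two norm equivalences: one must check that the partition-of-unity decomposition and the coordinate pull-back are bounded both ways between the intrinsic Sobolev/H\"older norms on $M$ and the Euclidean norms on the charts. For $C^{2+\mu}$ regularity of $M$ and smooth $\chi_i$ with compactly supported $u_i$, these are standard consequences of the chain rule and compactness of $M$, so no hard analysis is required beyond the Euclidean compact embedding already cited.
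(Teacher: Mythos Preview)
Your argument is correct and is exactly the standard localization proof one finds in the literature. Note, however, that the paper does not supply its own proof of this lemma: in the preamble to Section~2.2 the authors simply cite Hebey~\cite{RefWorks:111} for the result, so there is no ``paper's proof'' to compare against. Your chart-plus-partition-of-unity reduction to the Euclidean compact embedding of Lemma~\ref{L3.5} is precisely how such results are established in that reference.

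One small slip worth fixing: in the last paragraph you invoke ``the upper bound in the H\"older equivalence'' to conclude convergence of $u^{(k)}$ in $C^{\alpha}(M)$. What you actually need is the \emph{left} inequality,
\[
\|u^{(k)}-u^{(l)}\|_{C^{\alpha}(M)}\le \frac{1}{c'}\sum_{i=1}^N \|u_i^{(k)}-u_i^{(l)}\|_{C^{\alpha}(\overline{V_i})},
\]
which bounds the manifold norm \emph{above} by the chart norms; this is what turns chartwise Cauchy into global Cauchy. The right-hand inequality $\sum_i\|u_i\|\le C'\|u\|$ goes the wrong way for that purpose. Otherwise the argument is clean.
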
\\

The following result follows from the Gagliardo Nirenberg inequality in $\cite{RefWorks:52}$ on bounded $C^{1}$ domains, and Young's inequality on page 40 in \cite{RefWorks:69}.\\
\begin{lemma}\label{L4}
Let $\epsilon>0$ and $1<p<\infty$. Then there exists  $ C_{\epsilon, p}>0$ such that \[{\Vert v_{x}\Vert}_{p,\B }\le \epsilon {\Vert v_{xx}\Vert}_{p,\B }+C_{\epsilon,p} {\Vert v\Vert}_{p,\B}\] for all $v\in$ $W^{2}_p(\B)$.
\end{lemma}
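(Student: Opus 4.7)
\textbf{Proof proposal for Lemma \ref{L4}.} The plan is to apply the Gagliardo--Nirenberg interpolation inequality on the bounded $C^1$ domain $\B$ to express $\|v_x\|_{p,\B}$ as a geometric mean of $\|v_{xx}\|_{p,\B}$ and $\|v\|_{p,\B}$, and then invoke Young's inequality to trade the geometric mean for a weighted sum in which the coefficient of $\|v_{xx}\|_{p,\B}$ can be made as small as desired.

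First I would invoke Gagliardo--Nirenberg on $\B$ with $j=1$, $m=2$, and $r=q=p$. Checking the scaling relation $\frac{1}{p} = \frac{1}{n} + a\bigl(\frac{1}{p}-\frac{2}{n}\bigr) + (1-a)\frac{1}{p}$ forces $a=\tfrac{1}{2}$, which is admissible since $\tfrac{j}{m}=\tfrac{1}{2}\le a\le 1$. This yields a constant $C=C(p,\B)>0$ such that
\[
\|v_x\|_{p,\B} \le C\bigl(\|v_{xx}\|_{p,\B}^{1/2}\,\|v\|_{p,\B}^{1/2} + \|v\|_{p,\B}\bigr)
\]
for every $v\in W^2_p(\B)$; the additive $\|v\|_{p,\B}$ term is the standard correction on bounded domains.

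Next I would apply Young's inequality to the product $\|v_{xx}\|_{p,\B}^{1/2}\,\|v\|_{p,\B}^{1/2}$ in the form $ab\le \tfrac{\delta}{2}a^2 + \tfrac{1}{2\delta}b^2$ with $a=\|v_{xx}\|_{p,\B}^{1/2}$ and $b=\|v\|_{p,\B}^{1/2}$, obtaining
\[
\|v_x\|_{p,\B} \le \frac{C\delta}{2}\,\|v_{xx}\|_{p,\B} + \Bigl(\frac{C}{2\delta}+C\Bigr)\|v\|_{p,\B}.
\]
Given $\epsilon>0$, choosing $\delta = 2\epsilon/C$ and setting $C_{\epsilon,p} := \frac{C^2}{4\epsilon}+C$ yields the claimed estimate.

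There is no substantive obstacle here; the only point requiring care is the verification that the Gagliardo--Nirenberg exponents are admissible on the $C^1$ bounded domain $\B$, which is exactly the setting of the reference \cite{RefWorks:52} cited by the authors. Everything else is a one-line application of Young's inequality and a choice of $\delta$.
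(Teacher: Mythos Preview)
Your proposal is correct and follows exactly the route the paper indicates: the authors do not give a proof but state that the lemma follows from the Gagliardo--Nirenberg inequality on bounded $C^1$ domains (\cite{RefWorks:52}) together with Young's inequality (\cite{RefWorks:69}), which is precisely the interpolation-plus-Young argument you have written out.
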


\section{Statements of Main Results}

\setcounter{equation}{0}
 The primary concern of this work is the system
\begin{align}\label{sy5}
 u_t\nonumber&=  D\Delta u+H(u)
 & x\in \Omega, \quad&0<t<T
\\\nonumber v_t&=\tilde D\Delta_{M} v+F(u,v)& x\in M,\quad& 0<t<T\\ D\frac{\partial u}{\partial \eta}&=G(u,v) & x\in M, \quad&0<t<T\\\nonumber
u&=u_0  &x\in\Omega ,\quad& t=0\\\nonumber v&=v_0 & x\in M ,\quad &t=0\end{align}
where 
 $D$ and $\tilde D$ are $k\times k$ and $m\times m$ diagonal matrices with positive diagonal entries, $F=(F_i):\mathbb{R}^k\times\mathbb{R}^m\rightarrow \mathbb{R}^m, G=(G_j):\mathbb{R}^k\times\mathbb{R}^m\rightarrow \mathbb{R}^k$ and $H=(H_j):\mathbb{R}^k \rightarrow \mathbb{R}^k$, and
$ u_0=( {u_0}_j)\in W_p^{2}(\Omega)$, $v_0= ({v_0}_i)\in W_p^{2}(M)$ with $p>n$. Also, $u_0 $  and $v_0$ satisfy the compatibility condition\[ D{\frac{ \partial {u_0}}{\partial \eta}} =G(u_0,v_0)\quad \text{on $M.$}\]
\begin{rems}
Since $p>n$, $u_0$ and $v_0$ are H\"{o}lder continuous functions on $\overline \Omega$  and $ M$ respectively (see $\cite{RefWorks:76}$, $\cite{RefWorks:52})$. 
\end{rems}\\
\begin{definition}\label{blah}
A function $(u,v)$ is said to be a $\it solution$ of $\left (\ref{sy5}\right)$ if and only if \[u \in C(\overline \Omega\times[0,T),\mathbb{R}^k)\cap C^{1,0}(\overline \Omega\times(0,T),\mathbb{R}^k)\cap C^{2,1}( \Omega\times(0,T),\mathbb{R}^k)\] and \[v \in C(M\times[0,T),\mathbb{R}^m)\cap C^{2,1}( M\times(0,T),\mathbb{R}^m) \] such that $(u,v)$ satisfies $\left (\ref{sy5}\right)$. If $T=\infty$, the solution is said to be a {\it global solution.}
\end{definition}
Moreover, a solution $(u,v)$ defined for $0\leq t<b$ is a $\it  maximal\  solution$ of $\left (\ref{sy5}\right)$ if and only if $(u,v)$ solves $\left (\ref{sy5}\right)$ with $T=b$, and if $d>b$ and $(\tilde u,\tilde v)$ solves $\left (\ref{sy5}\right)$ for $T=d$ then there exists $0<c<b$ such that $(u(\cdot,c),v(\cdot,c))\ne(\tilde u(\cdot,c), \tilde v(\cdot,c))$.

We say $F$, $G$ and $H$ are $\it quasi positive$ if and only if $F_i(\zeta,\xi)\geq 0$ whenever $\xi\in\mathbb{R}_+^m$ and $\zeta\in \mathbb{R}_+^k$ with $\xi_i=0$ for $i=1,...,m$, and $G_j( \zeta,\xi)\geq 0$, $H_j( \zeta)\geq 0$ whenever $\xi\in\mathbb{R}_+^m$ and $ \zeta\in\mathbb{R}_+^k$ with $\zeta_j=0$, for $j=1,...,k.$\\

The purpose of this study is to give sufficient conditions guaranteeing that $\left (\ref{sy5}\right)$ has a global solution. The following Theorems comprise  local and global existence of the solution.

\begin{theorem}\label{lo}
 Suppose $F$, $ G$ and $H$ are locally Lipschitz. Then there exists $T_{\max}>0$ such that $\left (\ref{sy5}\right)$ has a unique, maximal solution $(u,v)$ with $T=T_{\max}$.  Moreover, if $T_{\max}<\infty$ then 
\[\displaystyle \limsup_{t \to T^-_{\max}}\Vert u(\cdot,t)\Vert_{\infty,\Omega}+\displaystyle \limsup_{t \to T^-_{\max} }\Vert v(\cdot,t)\Vert_{\infty,M}=\infty\]
\end{theorem}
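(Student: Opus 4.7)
The plan is a Banach contraction argument in $W^{2,1}_p$-type spaces with $p>n+1$, followed by a uniform-in-time restart to obtain the blow-up alternative. I would fix $T>0$ (to be shrunk below) and work in the closed ball $B_R$ of radius $R$ in
\[ X_T \;:=\; W^{2,1}_p(\Omega\times(0,T))^k \;\times\; W^{2,1}_p(M\times(0,T))^m, \]
centered at the solution $(u^{\ast},v^{\ast})$ of the linear problem obtained by setting $H\equiv 0$, $F\equiv 0$ and replacing $G(u,v)$ by the time-independent boundary datum $D\partial_\eta u_0$. For each $(\bar u,\bar v)\in B_R$, define $\Phi(\bar u,\bar v):=(u,v)$ by solving the two decoupled linear problems with source terms $H(\bar u)$ and $F(\bar u,\bar v)$ and boundary datum $G(\bar u,\bar v)$. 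A priori existence and $W^{2,1}_p$-bounds for $v$ follow from the Cauchy-problem version of Lemma \ref{L1} on the compact manifold $M$ (established elsewhere in the paper by a partition-of-unity reduction to the Euclidean case), and for $u$ from the linear Neumann $L_p$-estimate developed in section 5, namely Brown's H\"{o}lder result paired with the Solonnikov-type inequality for the nonhomogeneous Neumann problem.

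To verify the two contraction properties, the hypothesis $p>n+1$ ensures via the corollary to Lemma \ref{Hol} that $X_T$ embeds into a product of H\"{o}lder spaces. Combined with local Lipschitz continuity of $F$, $G$, $H$, this gives uniform bounds on $\|H(\bar u)\|_{p,\Omega_T}$, $\|F(\bar u,\bar v)\|_{p,M_T}$, and the boundary norm $\|G(\bar u,\bar v)\|_{p,M\times(0,T)}^{(2-1/p,1-1/(2p))}$ across $B_R$. The constants in Lemma \ref{Hol} come multiplied by positive powers of a parameter $\delta\le\sqrt{T}$, so the nonlinear contributions can be absorbed by shrinking $T$, yielding $\Phi(B_R)\subset B_R$. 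Contractivity in a weaker norm (say, $L_p$ of source differences translating into $W^{2,1}_p$-control of the difference of images) follows by the same mechanism applied to $(\bar u_1-\bar u_2,\bar v_1-\bar v_2)$: the Lipschitz constant of the nonlinearities on $B_R$ is fixed, and a further reduction of $T$ yields a strict contraction. The unique fixed point is the desired local solution; continuity and continuity of first spatial derivatives up to the boundary follow from the H\"{o}lder embedding, whereas interior $C^{2,1}$ smoothness is classical parabolic regularity.

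Maximality and uniqueness are standard: define $T_{\max}$ as the supremum of existence times, use a Gronwall estimate on differences of two solutions (via the same linear estimates applied with a Lipschitz right-hand side) to paste them into a single maximal solution on $[0,T_{\max})$. For the blow-up criterion, suppose for contradiction that $T_{\max}<\infty$ and
\[ M_0 \;:=\; \sup_{0\le t<T_{\max}}\bigl(\|u(\cdot,t)\|_{\infty,\Omega}+\|v(\cdot,t)\|_{\infty,M}\bigr)\;<\;\infty. \]
Pick any $s\in(0,T_{\max})$. Since $s>0$, parabolic smoothing on $[s/2,s]$ places $u(\cdot,s)\in W^2_p(\Omega)$ and $v(\cdot,s)\in W^2_p(M)$, and the compatibility condition $D\partial_\eta u(\cdot,s)=G(u(\cdot,s),v(\cdot,s))$ is automatic because the PDE holds up to the boundary. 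Restarting the local existence construction at time $s$ yields a solution on $[s,s+\tau]$ with $\tau$ depending only on $M_0$ and the Lipschitz constants of $F,G,H$ on the ball of radius $M_0+1$. Choosing $s$ so that $s+\tau>T_{\max}$ and concatenating contradicts maximality.

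The main obstacle is the uniformity claim for $\tau$ in the restart step: the local existence time must depend only on $M_0$, not on higher-order norms of $(u(\cdot,s),v(\cdot,s))$, which could in principle blow up as $s\uparrow T_{\max}$ even while the $L^\infty$ norm stays bounded. Handling this cleanly requires reformulating the self-mapping step so that the center of the ball is the linear parabolic extension of the current state rather than a fixed reference, and tracking the constants in Lemma \ref{L1} and its manifold analogue to show that the radius $R$ can be selected in terms of $M_0$ alone via parabolic smoothing on a short interval preceding $s$. Pinning down this uniform-in-$s$ control of the $W^{2-2/p}_p$-norm of the restart data from the $L^\infty$-bound is the key technical point.
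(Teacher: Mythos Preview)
Your contraction-mapping strategy in $W^{2,1}_p$ is a genuinely different route from the paper's. The paper first proves global existence under the stronger hypothesis that $F,G,H$ are \emph{globally} Lipschitz (Theorem \ref{glo}), using Schaefer's fixed point theorem in $C(\overline\Omega_T)\times C(M_T)$: the solution map is continuous and compact thanks to the H\"older estimate of Theorem \ref{n}, and the Leray--Schauder boundedness condition follows from the global Lipschitz bound. Then, for locally Lipschitz $F,G,H$, the paper truncates by smooth cutoffs $\phi_r,\psi_r$ supported in balls of radius $2r$ to obtain globally Lipschitz $F_r,G_r,H_r$, applies Theorem \ref{glo} to get a global solution $(u_r,v_r)$ of the truncated system, and defines $T_r$ as the first time $\|u_r(\cdot,t)\|_{\infty,\Omega}+\|v_r(\cdot,t)\|_{\infty,M}$ exceeds $r$. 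The maximal time is $T_{\max}=\lim_{r\to\infty}T_r$, and the blow-up alternative is then essentially tautological: if the $L^\infty$ norm stayed bounded by $L$, the solution $(u_{2L},v_{2L})$ would already exist globally and agree with $(u,v)$, contradicting $T_{\max}<\infty$.

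The payoff of the paper's truncation approach is that it completely bypasses the obstacle you correctly flag as ``the key technical point'': no restart is ever performed, so there is no need to control the local existence time uniformly in the $W^{2-2/p}_p$ norm of the restart data. Your route is the more conventional one and can be pushed through, but the uniformity step is genuine extra work. There is also a smaller wrinkle in your self-map: to place $u$ back in $W^{2,1}_p(\Omega_T)$ via the Solonnikov Neumann estimate (Lemma \ref{L1.5}) you need $G(\bar u,\bar v)\in W^{1-1/p,\,1/2-1/(2p)}_p(M_T)$, not merely $L_p$; this can be arranged by a Nemytskii-type argument since $(\bar u|_M,\bar v)$ is H\"older and the required fractional order is below $1$, but it does not follow directly from Theorem \ref{n}, which only delivers H\"older (not $W^{2,1}_p$) control on $u$ from $L_p$ Neumann data.
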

In addition to the assumptions stated above, we say condition $V_{i,j}$ holds for $1\leq j\leq k$ and $1\leq i\leq m$ if and only if 
\begin{itemize}
\item[($V_{i,j}1$)] There exist $\alpha,\beta,\sigma>0$ such that \[\sigma F_i(\zeta,\nu)+ G_j(\zeta,\nu)\leq \alpha(\zeta_j+\nu_i+1)\quad\text{and}\quad H_j(\zeta)\leq \beta(\zeta_j+1)\quad\text{ for all} \quad\nu \in\mathbb{R}^m_{\geq 0},\  \zeta \in\mathbb{R}^k_{\geq 0}\]
\item[($V_{i,j}2$)]There exists $K_g>0$ such that \[\quad G_j(\zeta,\nu)\leq K_g(\zeta_j+\nu_i+1)\quad\text{for all}\quad\nu \in\mathbb{R}^m_{\geq 0},\  \zeta \in\mathbb{R}^k_{\geq 0}\]
\item[($V_{i,j}3$)]There exists $l \in \mathbb{N}$ and $K_f>0$ such that \[\quad  F_i(\zeta,\nu)\leq K_f( |\zeta|+|\nu|+1)^l\quad\text{for all} \quad\nu \in\mathbb{R}^m_{\geq 0},\  \zeta \in\mathbb{R}^k_{\geq 0}\]
\end{itemize}
\begin{rems}
$(V_{i,j}2)$ is related to the so - called linear ``intermediate sums" condition used by Morgan in $\cite{RefWorks:120}$, $\cite{RefWorks:123}$ in the special case when the system has only two equations. This condition in $\cite{RefWorks:120}$, $\cite{RefWorks:123}$, as well as $\cite{RefWorks:86}$ pertains to interactions between the first m-1 equations in an m component system. Again, see $\cite{RefWorks:120}$, $\cite{RefWorks:123}$ and $\cite{RefWorks:86}$. $(V_{i,j}1)$ helps control mass, and allows higher order nonlinearities in $F$, but requires cancellation of high-order positive terms by G. $(V_{i,j}3)$ implies $F$ is polynomially bounded above.
\end{rems}\\
\begin{rems}
We will show that $(V_{i,j}1)$ provides $L_1$ estimates for $u_j$ on $\Omega$ and $M$, and $v_i$ on $M$. $(V_{i,j}2)$ helps us bootstrap $L_p$ estimates for  $u_j$ on $M\times(0,T_{max})$ and $\Omega\times(0,T_{max})$, and  $v_i$ on $ M\times(0,T_{max})$. Finally, $(V_{i,j}2)$ and $(V_{i,j}3)$ allow us to use $L_p$ estimates to obtain sup norm estimates on $u_j$ and $v_i$. 
\end{rems}\\

\begin{theorem}\label{great}
Suppose $F$, $G$ and $H$ are locally Lipschitz,  quasi positive, and $u_0, v_0$ are componentwise nonnegative functions. Also, assume that for each $1\leq j\leq k$ and $1\leq i\leq m$, there exists $l_i\in\lbrace 1,...,k\rbrace$ and $k_j\in\lbrace1,...,m\rbrace$ so that both $V_{i,l_i}$ and $V_{k_j,j}$ are satisfied. Then  $(\ref{sy5})$ has a unique component-wise nonegative global solution.
\end{theorem}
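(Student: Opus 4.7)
The plan is to invoke the blow-up criterion in Theorem \ref{lo}: assuming $T_{\max}<\infty$, I would derive a contradiction by proving that $\|u(\cdot,t)\|_{\infty,\Omega}+\|v(\cdot,t)\|_{\infty,M}$ remains bounded as $t\to T_{\max}^-$. Nonnegativity is established first via the quasipositivity hypothesis through a standard truncation-plus-uniqueness argument: replace each $F_i,G_j,H_j$ by its evaluation at the positive part of the argument, solve the modified system locally, and use a scalar maximum principle applied componentwise to verify each component stays nonnegative. The remaining argument is the three-tier bootstrap $L^1\to L^p\to L^\infty$ telegraphed by the Remarks following $V_{i,j}$.

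For the $L^1$ tier, fix $j\in\{1,\dots,k\}$, let $i=k_j$ so $V_{k_j,j}$ holds, then integrate the $u_j$ equation over $\Omega$ (using the boundary condition to convert the Laplacian flux into $\int_M G_j$) and the $v_{k_j}$ equation over $M$. Combining with the weight $\sigma$ from $(V_{k_j,j}1)$ produces
\[
\frac{d}{dt}\!\left[\sigma\int_M v_{k_j}\,d\sigma+\int_\Omega u_j\,dx\right]=\int_M(\sigma F_{k_j}+G_j)\,d\sigma+\int_\Omega H_j\,dx,
\]
which $(V_{k_j,j}1)$ bounds above by $\alpha\int_M(u_j+v_{k_j}+1)+\beta\int_\Omega(u_j+1)$. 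The troublesome trace term $\int_M u_j$ can be absorbed by pairing this inequality with an $L^2$ energy estimate on $u_j$ (test against $u_j$ and dominate the resulting boundary integral $\int_M u_j G_j$ using $(V_{k_j,j}2)$ and the trace embedding of Lemma \ref{i}), then closing by Gronwall. This yields time-integrated $L^1$ bounds for $u_j$ on $\Omega$ and on $M$, and for $v_i$ on $M$, over $(0,T_{\max})$.

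The $L^p$ and $L^\infty$ tiers proceed by iteration using Lemma \ref{L1} on $\Omega$ together with its manifold analog on $M$ (the latter being one of the new estimates established in this paper extending Huisken--Polden and Sharples). Since $(V_{i,j}2)$ is linear, $\|G_j\|_{L^p(M\times(0,T_{\max}))}$ is controlled by $\|u_j\|_{L^p}+\|v_{k_j}\|_{L^p}$, and Lemma \ref{L1} then promotes this to $W_p^{2,1}$ regularity of $u_j$; for $v_i$, the polynomial bound $(V_{i,j}3)$ requires $L^{pl}$ control on $(u,v)$ to conclude $F_i\in L^p(M\times(0,T_{\max}))$, and the manifold analog yields $v_i\in W_p^{2,1}(M\times(0,T_{\max}))$. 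The interpolation-and-embedding inequality Lemma \ref{Hol} raises integrability at each iteration; once $p>n+1$, Brown's Hölder estimate from Section 5 (applied with $G_j$ as Neumann data in $L^p$ on $M\times(0,T_{\max})$) gives Hölder continuity of $u_j$ on $\overline\Omega$, and the corollary to Lemma \ref{Hol} gives the same for $v_i$ on $M$, producing the sup-norm control that contradicts blow-up. The main obstacle I anticipate is organizing this bootstrap correctly: because $F_i$ may grow polynomially of degree $l$ while $G_j$ and $H_j$ grow only linearly, one must first push $u$ to high integrability (exploiting its favorable linear structure) and only then feed the improved $u$-bound through $(V_{i,j}3)$ to raise $v_i$ into $W_p^{2,1}$; this volume/surface interplay between interior and manifold regularity is the technical core of the argument.
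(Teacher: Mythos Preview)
Your overall architecture (blow-up criterion, nonnegativity via truncation, then $L^1\to L^p\to L^\infty$) matches the paper, but the mechanism you propose for the $L^p$ and $L^\infty$ tiers has a genuine gap. The conditions $(V_{i,j}1)$--$(V_{i,j}3)$ are \emph{one-sided}: they bound $F_i$, $G_j$, $H_j$ only from above on the nonnegative orthant, and the paper imposes no polynomial lower bound. Consequently your assertion that ``$\|G_j\|_{L^p(M\times(0,T_{\max}))}$ is controlled by $\|u_j\|_{L^p}+\|v_{k_j}\|_{L^p}$'' is false as stated, and you cannot feed $G_j$ as Neumann data into a linear $W^{2,1}_p$ estimate. (There is a second obstruction here: Lemma~\ref{L1} is for Dirichlet data, and its Neumann analogue, Lemma~\ref{L1.5}, requires the flux in $W_p^{1-1/p,\,1/2-1/2p}$, not merely $L^p$, so even with two-sided bounds the step ``Lemma~\ref{L1} then promotes this to $W_p^{2,1}$ regularity of $u_j$'' does not go through.) The same one-sidedness blocks your $L^\infty$ step: Brown's estimate (Theorem~\ref{n}) needs $\gamma\in L^p$, not just an $L^p$ upper bound on $\gamma$.

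The paper handles this by replacing your direct regularity bootstrap with a \emph{duality} argument (Lemma~\ref{global_time1}): one solves the backward problems (\ref{aj2})--(\ref{ajj3}) with nonnegative data $\vartheta,\tilde\vartheta$, obtains nonnegative $\varphi,\Psi$, and integrates against $u_j,v_i$. After integration by parts the nonlinear terms appear as $\int\varphi H_j+\int\Psi(F_i+G_j)$, where nonnegativity of the test functions lets the one-sided bounds $(V_{i,j}1)$ do the work. This yields $L^{p'}$ control of $(u_j,v_i)$ from $L^{q}$ control, with the gain governed by Lemma~\ref{flat} on $\partial_\eta\varphi$; Lemma~\ref{implication} (which is essentially the energy estimate you sketch) transfers $v_i\in L^q$ to $u_j\in L^q$ on both $M$ and $\Omega$ and closes the loop. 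The $L^1$ seed (Lemma~\ref{adventure}) is also obtained by a backward test function, now with the boundary condition $d\,\partial_\eta\varphi=\alpha\varphi+1$ chosen so that the trace term $\int_0^T\!\int_M u_j$ appears directly on the left --- this replaces your proposed $L^2$-energy absorption, which would be circular without an a priori $L^2$ bound on $v_i$. Finally, the $L^\infty$ step is done by comparison: one builds a super-system (\ref{comp1}) whose right-hand sides are the \emph{upper bounds} $\beta(u_j+1)$, $K_g(u_j+v_i+1)$, $K_f(u_j+v_i+1)^l$ (now genuinely in every $L^p$), applies Theorem~\ref{n} and Theorem~\ref{3} to get $U,V$ bounded, and invokes the maximum principle to trap $0\le u_j\le U$, $0\le v_i\le V$.
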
\\

\begin{corollary}
Suppose $k=m=1,$ $ F,$ $G$ and $H$ are locally Lipschitz and quasipositive, and $u_0, v_0$ are nonnegative functions. If $V_{1,1}$ is satisfied, then  $(\ref{sy5})$ has a unique nonnegative global solution.
\end{corollary}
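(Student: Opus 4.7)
The plan is to derive this directly from Theorem \ref{great} by specializing to the scalar case $k=m=1$. Under this restriction, the only admissible index values in the statement of Theorem \ref{great} are $j=1$ and $i=1$, and likewise the only allowable choices for the auxiliary indices are $l_{1}\in\{1,\ldots,k\}=\{1\}$ and $k_{1}\in\{1,\ldots,m\}=\{1\}$. Hence the compound hypothesis ``for each $1\le j\le k$ and $1\le i\le m$ there exist $l_i$ and $k_j$ such that both $V_{i,l_i}$ and $V_{k_j,j}$ hold'' collapses to the single requirement that $V_{1,1}$ holds.

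Given this observation, my proof would proceed in two short steps. First, I would verify that the standing assumptions of Theorem \ref{great} are inherited from the corollary's hypotheses: $F$, $G$ and $H$ are locally Lipschitz and quasipositive, and $u_0$, $v_0$ are componentwise nonnegative (which, in the scalar case, is just nonnegativity). Second, I would note that $V_{1,1}$, being assumed, simultaneously plays the role of $V_{i,l_i}$ and $V_{k_j,j}$ for the unique indices $i=j=l_1=k_1=1$. Applying Theorem \ref{great} then yields a unique componentwise nonnegative global solution; componentwise nonnegative in $\mathbb{R}^{1}\times\mathbb{R}^{1}$ is simply nonnegativity.

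There is no real obstacle here: the corollary is a pure index-counting specialization of Theorem \ref{great}. The only point that warrants a sentence of comment is the reconciliation of the three pieces of $V_{1,1}$ (namely $(V_{1,1}1)$, $(V_{1,1}2)$ and $(V_{1,1}3)$) with the two hypotheses $V_{i,l_i}$ and $V_{k_j,j}$ demanded by Theorem \ref{great}; since both reduce to the same condition $V_{1,1}$ when $k=m=1$, no additional structural assumption on $F$, $G$ or $H$ is needed beyond what the corollary already postulates. The conclusion, including uniqueness and the global (i.e.\ $T_{\max}=\infty$) nature of the solution, then follows immediately.
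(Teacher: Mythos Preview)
Your proposal is correct and matches the paper's treatment: the corollary is stated immediately after Theorem~\ref{great} without a separate proof, precisely because it is the $k=m=1$ specialization you describe, in which the index hypotheses of Theorem~\ref{great} collapse to the single condition $V_{1,1}$.
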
\\

In the process of obtaining our results, we will derive $W^{2,1}_p(M_T)$ estimates of the Cauchy problem on $M_T$, and H\"{o}lder estimates of the solution to the Neumann problem on $\Omega_T$. The H\"{o}lder estimates for the solution to the Neumann problem are given as a comment in Brown \cite{ RefWorks:81}. We give the statement as Theorem $\ref{n}$ below, and supply a proof in section 5.  Let $\tilde d,  d>0$. Consider the systems
\begin{align}
 \Psi_t &=\tilde d \Delta_{\p} \Psi+f &(\xi,t)&\in M\times(0,T)\nonumber \\
 {\Psi\big|}_{t=0}&= \Psi_0 &\xi&\in M \label{sys2}
\end{align}
 and \begin{align}\label{m2}
 \varphi_t\nonumber&=  d\Delta \varphi+\theta &
 x\in \Omega,&\quad 0<t<T
\\ d\frac{\partial \varphi}{\partial \eta}&=\gamma & x\in M,&\quad 0<t< T\\\nonumber
\varphi&=\varphi_0 & x\in\Omega ,&\quad t=0
\end{align}
\begin{theorem}\label{3}
If $1<p<\infty$ and $T>0$, then there exists $\hat C_{p,T}>0$ such that whenever $\Psi_0\in W^{2-\frac{2}{p}}_p(M)$ and  $f\in L_p(M_T)$, there exists a unique solution $\Psi\in W_p^{2,1}(M_T)$ of $(\ref{sys2})$, and \[\Vert \Psi\Vert_{p,M_T}^{(2)}\leq\hat C_{p,T}(\Vert f\Vert_{p, M_T}+\Vert \Psi_0\Vert_{p,\p}^{(2-\frac{2}{p})})\]
\end{theorem}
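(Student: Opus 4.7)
The plan is to prove Theorem~\ref{3} by localizing the Cauchy problem on the compact manifold $M$ to a finite family of parabolic problems in Euclidean space and then invoking Lemma~\ref{L1}. Since $M$ is a compact $(n-1)$-dimensional manifold of class $C^{2+\mu}$ without boundary, I would fix a finite atlas $\{(U_k,\phi_k)\}_{k=1}^N$ for which each $\phi_k:U_k\to V_k\subset\mathbb{R}^{n-1}$ is a $C^{2+\mu}$ diffeomorphism onto a bounded smooth domain, together with a smooth partition of unity $\{\eta_k\}$ strictly subordinate to the atlas. The space $W^{2,1}_p(M_T)$ is then characterized (up to equivalent norms) as those $\Psi$ for which each $(\eta_k\Psi)\circ\phi_k^{-1}$ lies in $W^{2,1}_p(V_k\times(0,T))$.

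For each $k$ I would set $w_k=(\eta_k\Psi)\circ\phi_k^{-1}$. In the coordinates $(x^1,\ldots,x^{n-1})$ on $V_k$ the Laplace--Beltrami operator reads $g^{ij}\partial_i\partial_j+b^j\partial_j$ with $(g^{ij})$ symmetric, continuous and uniformly positive definite on $\overline{V_k}$ and $b^j$ continuous, so that $w_k$ satisfies a problem of the form (\ref{m1}) with zero boundary data (since $\eta_k$ is compactly supported in $U_k$), initial datum $(\eta_k\Psi_0)\circ\phi_k^{-1}\in W^{2-2/p}_p(V_k)$, and forcing
\begin{align*}
\tilde f_k=\bigl[\eta_k f-\tilde d\bigl(\Psi\,\Delta_M\eta_k+2\langle\nabla\eta_k,\nabla\Psi\rangle_g\bigr)\bigr]\circ\phi_k^{-1}.
\end{align*}
Applying Lemma~\ref{L1} on each $V_k\times(0,T)$ and summing over $k$, after bounding the commutator contributions in $L_p$, yields
\begin{align*}
\|\Psi\|^{(2)}_{p,M_T}\le C\bigl(\|f\|_{p,M_T}+\|\Psi_0\|^{(2-2/p)}_{p,M}+\|\nabla_M\Psi\|_{p,M_T}+\|\Psi\|_{p,M_T}\bigr),
\end{align*}
where the constant depends only on $p$, $T$ and the fixed geometric data.

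To close the estimate I would apply Lemma~\ref{Hol} with $r=0$, $|s|=1$, $q=p$, and a sufficiently small interpolation parameter $\delta$, which absorbs $\|\nabla_M\Psi\|_{p,M_T}$ into the left-hand side at the cost of a further multiple of $\|\Psi\|_{p,M_T}$; a standard Gronwall iteration on short subintervals of $[0,T]$ then eliminates the remaining $L_p$ term and produces the desired bound with a constant $\hat C_{p,T}$.

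Uniqueness follows from linearity and the a priori estimate applied to the difference of two putative solutions. For existence, I would approximate $f$ and $\Psi_0$ by smooth data, construct smooth solutions of the approximate problems by a Galerkin expansion in the eigenfunctions of $-\Delta_M$ (which form a complete orthonormal basis of $L_2(M)$ since $M$ is compact), and pass to the limit in $W^{2,1}_p(M_T)$ using the estimate. The main technical obstacle I anticipate is the absorption step: the commutator forcing is genuinely first-order in $\Psi$, and controlling it uniformly in $T$ requires a careful decomposition of the time interval so that the interpolation parameter $\delta$ of Lemma~\ref{Hol} can be chosen small on each piece without corrupting the ultimate dependence of $\hat C_{p,T}$ on $T$.
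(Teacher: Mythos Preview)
Your approach is correct and essentially the standard localization argument, but it differs from the paper's route in a meaningful way. The paper does not derive the a~priori estimate for all $p$ at once; instead it takes the $p=2$ case as given (from the Sharples/Polden $W^{2,1}_2$ result, Corollary~\ref{bd2}) and then \emph{bootstraps}: if the estimate holds at exponent $p$, the commutator terms arising in the localized problem lie in $W^{1,1}_p$, hence in $L_q$ for $q$ slightly larger than $p$ by Sobolev embedding, and Lemma~\ref{L1} then yields the estimate at exponent $q$. Iterating covers $[2,\infty)$, and a separate density/interpolation argument (closer in spirit to yours) handles $1<p<2$. By contrast, you avoid the $p=2$ seed entirely: you control the first-order commutator directly via the interpolation inequality of Lemma~\ref{Hol} with small $\delta$, absorb, and then run a short-time Gronwall iteration. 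Your route is more self-contained and treats all $p$ uniformly, at the cost of the absorption/time-splitting step you correctly flag as the delicate point; the paper's route leans on the existing $L_2$ theory and trades absorption for an embedding-based induction on $p$. Both ultimately rest on Lemma~\ref{L1} for the Euclidean chart problems, and your Galerkin-plus-approximation existence argument is a legitimate substitute for the paper's reliance on the $p=2$ existence result.
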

\begin{theorem}\label{n}
Suppose $p>n+1$ and $T>0$ and $\theta\in L_p(\Omega\times(0, T))$, $\gamma\in L_p(M\times(0, T))$ and  $\varphi_0\in W^{2}_p(\Omega)$ such that  \[  d\frac{\partial {\varphi_0}}{\partial \eta} =\gamma(x,0)\quad{\text {on $M$.}}\] Then there exists $C_{p,T}>0$ independent of $\theta, \gamma$ and $\varphi_0$ and a unique weak solution $\varphi \in V_2^{1,\frac{1}{2}}(\Omega_T)$ of $(\ref{m2})$, such that if $0<\beta<1-\frac{n+1}{p}$ then \[ \vert \varphi\vert^{(\beta)}_{\Omega_{\hat T}}\leq C_{p,T}( \Vert \theta\Vert_{p,\Omega_{ T}}+\Vert \gamma\Vert_{p,M_{ T}}+\Vert \varphi_0\Vert^{(2)}_{p,\Omega})\]
\end{theorem}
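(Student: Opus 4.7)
\medskip
\noindent\textbf{Proof plan for Theorem~\ref{n}.} The plan is to split $\varphi = \varphi_1 + \varphi_2$, isolating the high-regularity contributions of $\theta$ and $\varphi_0$ from the rough Neumann flux. Let $\varphi_1$ solve
\[ \partial_t \varphi_1 = d\Delta \varphi_1 + \theta \ \text{in}\ \Omega_T,\qquad d\,\partial_\eta \varphi_1 = d\,\partial_\eta \varphi_0 \ \text{on}\ M_T,\qquad \varphi_1(\cdot,0) = \varphi_0 \ \text{in}\ \Omega. \]
The Neumann datum is time-independent, belongs to $W_p^{1-1/p,(1-1/p)/2}(M_T)$ by the trace theorem applied to $\varphi_0 \in W_p^2(\Omega)$, and satisfies the zeroth-order compatibility condition trivially. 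The Neumann analog of Lemma~\ref{L1} (Chapter IV of \cite{RefWorks:65}) then yields $\varphi_1 \in W_p^{2,1}(\Omega_T)$ with
\[ \|\varphi_1\|_{p,\Omega_T}^{(2)} \le C\bigl(\|\theta\|_{p,\Omega_T} + \|\varphi_0\|_{p,\Omega}^{(2)}\bigr). \]
Since $p > n+1$, Lemma~\ref{Hol} (with $m=n$, $r=s=0$) immediately gives $\varphi_1 \in C^{\alpha}(\overline{\Omega_T})$ for every $\alpha < 2 - (n+2)/p$, in particular for any $\beta < 1 - (n+1)/p$.

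The remaining piece $\varphi_2 := \varphi - \varphi_1$ solves the homogeneous heat equation on $\Omega_T$ with zero initial data and Neumann flux $\tilde\gamma := \gamma - d\,\partial_\eta \varphi_0 \in L_p(M_T)$, with $\|\tilde\gamma\|_{p,M_T} \le \|\gamma\|_{p,M_T} + C\|\varphi_0\|_{p,\Omega}^{(2)}$ by the trace theorem. I would represent
\[ \varphi_2(x,t) = \int_0^t \!\int_M \Gamma_d(x-y,t-s)\,\mu(y,s)\,d\sigma_y\,ds, \]
where $\Gamma_d$ is the fundamental solution of $\partial_t - d\Delta$ on $\mathbb{R}^n$. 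Taking the Neumann trace from inside $\Omega$ and invoking the standard jump relation for single-layer heat potentials produces the second-kind Volterra equation
\[ \tfrac12\mu(x,t) + \int_0^t \!\int_M d\,\partial_{\eta_x}\Gamma_d(x-y,t-s)\,\mu(y,s)\,d\sigma_y\,ds = -\tilde\gamma(x,t) \]
on $M_T$. Because $M$ is of class $C^{2+\mu}$, the kernel $\partial_{\eta_x}\Gamma_d$ is only weakly singular on $M$, and Volterra iteration in $t$ inverts the equation on $L_p(M_T)$, producing $\mu \in L_p(M_T)$ with $\|\mu\|_{p,M_T} \le C\|\tilde\gamma\|_{p,M_T}$.

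The main obstacle is the last step: obtaining the H\"older estimate $|\varphi_2|^{(\beta)}_{\overline{\Omega_T}} \le C\|\mu\|_{p,M_T}$ directly from the single-layer representation when $\mu$ is only in $L_p$. The strategy is to bound the increment $|\varphi_2(x,t) - \varphi_2(x',t')|$ using H\"older's inequality with conjugate exponent $p'$, after splitting the space-time domain of integration into the ``near'' region (within parabolic distance $\delta := |x-x'| + |t-t'|^{1/2}$ of the singularity) and the ``far'' region, where one exploits the pointwise bounds $|\nabla_x\Gamma_d(y,s)| \lesssim s^{-(n+1)/2} e^{-|y|^2/(5ds)}$ and $|\partial_t \Gamma_d(y,s)| \lesssim s^{-(n+2)/2} e^{-|y|^2/(5ds)}$. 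A direct computation in local coordinates on $M$ shows that the relevant $L_{p'}$ kernel integrals are finite precisely when $p' < (n+1)/n$, i.e., when $p > n+1$, and the same splitting yields a $\delta^\beta$ modulus for any $\beta < 1 - (n+1)/p$. This is the crux that justifies the sharp threshold. Existence of $\varphi \in V_2^{1,1/2}(\Omega_T)$ is then standard by Hilbert space methods (since $L_p \subset L_2$ for $p > n+1$), and uniqueness follows from the usual $L_2$ energy identity applied to the difference of two solutions.
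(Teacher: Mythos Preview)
Your proposal is correct and structurally identical to the paper's proof: the same splitting $\varphi=\varphi_1+\varphi_2$ (the paper's indices are swapped), the same $W_p^{2,1}$ treatment of the piece carrying $\theta$ and $\varphi_0$ via Lemma~\ref{L1.5} followed by parabolic embedding, and the same single-layer heat potential representation with a near/far parabolic splitting for the piece with rough Neumann data (the paper's Lemmas~\ref{a} and~\ref{e}). The paper differs from your sketch only in how it justifies two technical ingredients: for the $L_p$-invertibility of the boundary integral operator it cites Fabes--Rivi\`ere \cite{RefWorks:105} (which holds even on $C^1$ domains) rather than your Volterra iteration, which works here precisely because the $C^{2+\mu}$ regularity of $M$ makes $\partial_{\eta_x}\Gamma_d$ weakly singular; and for the far-region increment it invokes Brown's Theorem~3.1 in \cite{RefWorks:81} to obtain the packaged bound on $|W(T-s,x,Q)-W(\tau-s,y,Q)|$, which is exactly what your mean-value-plus-gradient argument would produce. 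One detail you should make explicit: since $\varphi_2(x,t)$ and $\varphi_2(x',t')$ are integrals over \emph{different} time intervals, the paper isolates a third piece $\int_{t'}^{t}\int_M W\,g\,d\sigma\,ds$ (its Lemma~\ref{c}) in addition to the near/far split of $M\times(0,t')$; your two-region description glosses over this.
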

The proofs of Theorems $\ref{3}$ and $\ref{n}$ are given in sections 4 and 5. The remaining results are proved in section 6, and examples are given in section 7.

\section{$W^{2,1}_p$ estimates for the Cauchy problem on a manifold}
\setcounter{equation}{0}
Let $n\geq 2$ and $M$ be a compact $n-1$ dimensional Riemannian manifold without boundary. Consider $(\ref{sys2})$
where $\tilde d>0$, $f\in L_p(M_T)$ and $\Psi_0\in W_p^{2-\frac{2}{p}}(M)$. Searching the literature, we surprisingly could not find $W^{2,1}_p(M_T)$ estimates for the solutions to $(\ref{sys2})$. Tracing through the work in this direction, we found that Huisken and Polden \cite{RefWorks:16} and \cite{RefWorks:15}, and J.J Sharples \cite{RefWorks:114} give a result in the setting where $p=2$.  
 Using their $W_2^{2,1}(M_T)$ estimate, we obtain $W_p^{2,1}(M_T)$ a priori estimates for solutions of ($\ref{sys2}$) for all $ p>1$. For $a>0$ and smooth functions $f,g:M\times[0,\infty)\rightarrow \mathbb{R}$, Polden considered weighted inner products:
\[\langle f,g\rangle_{LL_a}=\int_0^{\infty} e^{-2at} \langle f(\cdot,t),g(\cdot,t)\rangle_{L^2(M)} dt\]
\[\langle f,g\rangle_{LW^1_a}=\int_0^{\infty} e^{-2at} \langle f(\cdot,t),g(\cdot,t)\rangle_{W_2^{1}(M)} dt\]
\[\langle f,g\rangle_{LW^2_a}=\int_0^{\infty} e^{-2at} \langle f(\cdot,t),g(\cdot,t)\rangle_{W_2^{2}(M)} dt\]
\[\langle f,g\rangle_{WW_a}=\langle f(\cdot,t),g(\cdot,t)\rangle_{LW^1_a}+\langle D_t f,D_tg\rangle_{LL_a}\]
Where $LL_a, LW_a$ and $WW_a$ are the Hilbert spaces formed by the completion of $C^{\infty}(M\times[0,\infty))$ in the corresponding norms, and $WW_a^0$ is the completion of subspace of  $C^{\infty}(M\times[0,\infty))$ with compact support in $WW_a$. See \cite{RefWorks:114} for the proof of the following result. 
\\
\begin{theorem}\label{1}
Suppose $\Psi_0$  lies in  $W_2^{1}(M)$ and $f\in LL_a(M\times[0,\infty))$. Then for sufficiently large a, the system $(\ref{sys2})$ has a  unique weak solution in  $WW_a^0$. 
\end{theorem}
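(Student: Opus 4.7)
The plan is to establish Theorem \ref{1} via a variational formulation in the weighted Hilbert spaces introduced above, in the spirit of the Lions--Magenes treatment of linear parabolic equations. First I would reduce to the case of zero initial data: since $\Psi_0\in W_2^1(M)$, pick a smooth cutoff $\eta\in C_c^\infty([0,\infty))$ with $\eta(0)=1$ and set $w(x,t)=\eta(t)\Psi_0(x)$; then $w\in WW_a$ for every $a>0$, and $\tilde\Psi:=\Psi-w$ satisfies an analogous problem with zero initial datum and modified right-hand side $\tilde f=f-w_t+\tilde d\,\Delta_M w$ paired weakly against $LW^1_a$ test functions, for which the $W_2^1$ regularity of $\Psi_0$ suffices.

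Next I would introduce the bilinear form
\[ B_a(u,\phi)=\int_0^\infty e^{-2at}\bigl[\langle u_t,\phi\rangle_{L^2(M)}+\tilde d\,\langle \nabla u,\nabla\phi\rangle_{L^2(M)}\bigr]\,dt \]
and the linear functional $\ell(\phi)=\int_0^\infty e^{-2at}\langle \tilde f,\phi\rangle_{L^2(M)}\,dt$ on the subspace of $WW_a^0$ consisting of trial functions with vanishing initial trace. The key observation is that for such $\phi$, the exponential weight and the decay at infinity justify integration by parts in time,
\[ \int_0^\infty e^{-2at}\langle \phi_t,\phi\rangle_{L^2(M)}\,dt=\tfrac12\int_0^\infty e^{-2at}\tfrac{d}{dt}\|\phi\|^2_{L^2(M)}\,dt=a\int_0^\infty e^{-2at}\|\phi\|^2_{L^2(M)}\,dt, \]
yielding $B_a(\phi,\phi)\geq a\,\|\phi\|^2_{LL_a}+\tilde d\,\|\nabla\phi\|^2_{LL_a}$, which is coercive in the $LW_a^1$ norm for any $a>0$. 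Continuity of $B_a$ and $\ell$ is routine from Cauchy--Schwarz.

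Armed with coercivity and continuity, I would carry out a Galerkin approximation using the eigenfunctions $\{\varphi_k\}$ of $-\Delta_M$ on the compact manifold $M$: for each $N$, solve the resulting linear system of ODEs for the expansion coefficients $\Psi^N(t)=\sum_{k=1}^N c_k^N(t)\varphi_k$ on $[0,\infty)$, derive energy estimates by the same integration-by-parts argument (uniform in $N$ once $a$ is large enough to absorb the constants arising from $\ell$), and pass to the weak limit in $LW_a^1$. The limit $\tilde\Psi$ then belongs to $WW_a^0$ once one uses the equation to rewrite $\tilde\Psi_t=\tilde d\,\Delta_M\tilde\Psi+\tilde f$ and estimate both sides in $LL_a$. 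Undoing the substitution $\Psi=\tilde\Psi+w$ yields the required weak solution, and uniqueness follows by applying the coercivity estimate to the difference of two putative solutions.

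The main obstacle will be extracting the $\tilde\Psi_t\in LL_a$ control needed for full membership in $WW_a^0$ (rather than merely $LW_a^1$): the coercivity estimate alone only controls the spatial gradient, so the additional time-derivative bound must come from the equation itself together with an $L^2$-regularizing estimate for $\Delta_M$ on $M$. A companion subtlety is verifying that the weak limit attains the initial condition in the trace sense, which will follow from the standard continuity-in-time embedding of Bochner-type spaces once both $\tilde\Psi\in LW^1_a$ and $\tilde\Psi_t\in LL_a$ have been established.
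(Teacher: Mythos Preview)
The paper does not supply its own proof of this statement; it simply refers to Sharples \cite{RefWorks:114} (building on Polden \cite{RefWorks:15} and Huisken--Polden \cite{RefWorks:16}). Your Galerkin/variational scheme is the standard approach and is in the right spirit.

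One step does need repair. After the reduction $\tilde\Psi=\Psi-\eta(t)\Psi_0$, the modified forcing $\tilde f=f-\eta'\Psi_0+\tilde d\,\eta\,\Delta_M\Psi_0$ is \emph{not} in $LL_a$, because $\Psi_0\in W_2^1(M)$ only places $\Delta_M\Psi_0$ in $(W_2^1(M))'$. You correctly note this when defining $B_a$ (``paired weakly against $LW_a^1$ test functions''), but then your proposed route to $\tilde\Psi_t\in LL_a$---``rewrite $\tilde\Psi_t=\tilde d\,\Delta_M\tilde\Psi+\tilde f$ and estimate both sides in $LL_a$''---cannot work as written, since the right-hand side is not in $LL_a$. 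The clean fix is to drop the reduction and run the Galerkin argument directly on the original problem: test with $\Psi^N$ for the $LW_a^1$ bound (the boundary term at $t=0$ is $\tfrac12\|\Psi_0^N\|_{L^2}^2$), and test with $\Psi_t^N$ (equivalently $-\Delta_M\Psi^N$) for the $\Psi_t\in LL_a$ and $LW_a^2$ bounds, where the boundary contribution is $\tfrac{\tilde d}{2}\|\nabla\Psi_0^N\|_{L^2}^2$, controlled precisely by the hypothesis $\Psi_0\in W_2^1(M)$. In effect this proves Theorems~\ref{1} and~\ref{2} together in a single Galerkin pass, which is how the cited references proceed.
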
 \\
\vspace{.2cm}\\
Furthermore using a priori estimates in \cite{RefWorks:114}, they showed that the solution belongs to $W^{2,1}_2(M\times[0,\infty))$.\\
\begin{theorem}\label{2}
Let $\Psi\in WW_a$ be the unique solution of $(\ref{sys2})$ with $\Psi_0\in W_2^{1}(M)$ and $f\in LL_a(M_T)$. Then $\Psi\in LW_a^2$, and there exists $C>0$ independent of $\Psi_0$ and $f$ such that 
\[\Vert \Psi\Vert_{LW_a^2}^2\leq C(\Vert \Psi_0\Vert_{W_2^1(M)}^2+\Vert f\Vert_{LL_a}^2)\]
\end{theorem}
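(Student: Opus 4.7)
The plan is to upgrade the energy identity underlying Theorem \ref{1} from $W_2^1$- to $W_2^2$-level by testing the equation against $-\Delta_M\Psi$, and then combining the resulting bound with elliptic regularity on $M$. Because $M$ is a compact Riemannian manifold without boundary, the standard elliptic estimate
\[
\|\Psi(\cdot,t)\|_{W_2^2(M)}^2 \leq C_M\bigl(\|\Delta_M\Psi(\cdot,t)\|_{L^2(M)}^2 + \|\Psi(\cdot,t)\|_{L^2(M)}^2\bigr)
\]
reduces the task to controlling the weighted integrals $\int_0^\infty e^{-2at}\|\Delta_M\Psi\|_{L^2(M)}^2\,dt$ and $\int_0^\infty e^{-2at}\|\Psi\|_{L^2(M)}^2\,dt$ in terms of the data.

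For a smooth solution of $(\ref{sys2})$ I would multiply the equation by $-\Delta_M\Psi$, integrate over $M$, and integrate by parts in the space variable; a Young's inequality absorption then yields
\[
\tfrac{d}{dt}\|\nabla\Psi(\cdot,t)\|_{L^2(M)}^2 + \tilde d\,\|\Delta_M\Psi(\cdot,t)\|_{L^2(M)}^2 \leq \tfrac{1}{\tilde d}\|f(\cdot,t)\|_{L^2(M)}^2.
\]
Multiplying by $e^{-2at}$ and integrating in $t$ over $(0,\infty)$, the integration-by-parts boundary term at $t=0$ contributes $\|\nabla\Psi_0\|_{L^2(M)}^2$, while the term at $t=\infty$ vanishes because $\Psi\in WW_a^0$; this delivers
\[
\tilde d\,\|\Delta_M\Psi\|_{LL_a}^2 + 2a\,\|\nabla\Psi\|_{LL_a}^2 \leq \tfrac{1}{\tilde d}\|f\|_{LL_a}^2 + \|\nabla\Psi_0\|_{L^2(M)}^2.
\]
Carrying out the parallel computation with test function $\Psi$ in place of $-\Delta_M\Psi$ gives $\|\Psi\|_{LL_a}^2 \leq C_a\bigl(\|\Psi_0\|_{L^2(M)}^2 + \|f\|_{LL_a}^2\bigr)$ once $a$ is chosen large enough to absorb the $\|\nabla\Psi\|_{L^2(M)}^2$ term that arises. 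Combining both estimates with the elliptic regularity displayed above produces the asserted bound $\|\Psi\|_{LW_a^2}^2 \leq C\bigl(\|\Psi_0\|_{W_2^1(M)}^2 + \|f\|_{LL_a}^2\bigr)$.

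The hard part will be justifying these identities for the weak $WW_a^0$-solution supplied by Theorem \ref{1} rather than for a smooth one, since we need both $\Delta_M\Psi(\cdot,t)\in L^2(M)$ for a.e.\ $t$ and $e^{-2at}\|\nabla\Psi(\cdot,t)\|_{L^2(M)}^2\to 0$ as $t\to\infty$. My plan is a density argument: approximate $(\Psi_0,f)$ by smooth data $(\Psi_0^{(n)},f^{(n)})$ using truncated expansions in the eigenfunctions of $\Delta_M$, so that the Fourier coefficients of the corresponding solutions $\Psi^{(n)}$ satisfy scalar linear ODEs from which all needed smoothness and exponential decay follow explicitly, derive the inequality for each $\Psi^{(n)}$, and pass to the limit using the uniqueness part of Theorem \ref{1} and the lower semicontinuity of the $LW_a^2$-norm under weak convergence.
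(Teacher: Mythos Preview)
Your argument is sound, but note that the paper does not actually prove this theorem: its ``proof'' consists solely of the citation ``See Lemma 4.3 in \cite{RefWorks:114}'' (Sharples). So there is no internal proof to compare against. What you have written is essentially the standard energy-method proof that one would expect to find in Sharples' paper: test against $-\Delta_M\Psi$ to control $\|\Delta_M\Psi\|_{LL_a}$, test against $\Psi$ to control $\|\Psi\|_{LL_a}$, combine with elliptic regularity on the closed manifold $M$, and justify the formal manipulations by a spectral (Galerkin) approximation using the eigenfunctions of $\Delta_M$. This is correct and is more than the paper itself supplies.

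One small wording issue: in your step testing against $\Psi$, the $\|\nabla\Psi\|_{L^2(M)}^2$ term appears on the \emph{favourable} side (with coefficient $+2\tilde d$), so it does not need to be absorbed; what the largeness of $a$ actually handles is the $\|\Psi\|_{L^2(M)}^2$ term coming from Young's inequality on $\int_M f\Psi$. This does not affect the validity of the conclusion. Your remark that the decay $e^{-2at}\|\nabla\Psi(\cdot,t)\|_{L^2(M)}^2\to 0$ needs justification is well taken, and your proposed eigenfunction-truncation route handles it cleanly, since for each truncated solution the Fourier coefficients satisfy scalar ODEs with explicit exponentially bounded solutions.
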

\begin{proof} See Lemma 4.3 in \cite{RefWorks:114}.
\end{proof}\\

The result below is an immediate consequence.\\
\begin{corollary}\label{bd2}
Let $0<T<\infty$. Suppose $\Psi_0\in W_2^{1}(M)$ and $f\in L_2(M_T)$. Then there exists a unique weak solution to $(\ref{sys2})$ in $W^{2,1}_2(M_T)$, and there exists $C>0$ independent of $\Psi_0$ and $f$ such that  
\[\Vert \Psi\Vert_{W^{2,1}_2(M_T)}^2\leq C(\Vert \Psi_0\Vert_{W_2^1(M)}^2+\Vert f\Vert_{L_2(M_T)}^2)\]
\end{corollary}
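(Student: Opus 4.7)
The plan is to reduce the finite-$T$ problem to the global-in-time setting already handled by Theorems \ref{1} and \ref{2}. Given $f\in L_2(M_T)$, extend $f$ by zero to $\tilde f$ on $M\times[0,\infty)$. For any $a>0$,
\[\Vert\tilde f\Vert_{LL_a}^2 = \int_0^T e^{-2at}\Vert f(\cdot,t)\Vert_{L^2(M)}^2\,dt \leq \Vert f\Vert_{L_2(M_T)}^2,\]
so $\tilde f\in LL_a(M\times[0,\infty))$. Applying Theorem \ref{1} with $a$ sufficiently large produces a unique weak solution $\tilde\Psi\in WW_a^0$ of $(\ref{sys2})$ with data $(\Psi_0,\tilde f)$, and Theorem \ref{2} then gives $\tilde\Psi\in LW_a^2$ together with
\[\Vert\tilde\Psi\Vert_{LW_a^2}^2 \leq C_a\bigl(\Vert\Psi_0\Vert_{W_2^1(M)}^2 + \Vert f\Vert_{L_2(M_T)}^2\bigr).\]

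Since $\Delta_M\tilde\Psi\in LL_a$ and $\tilde\Psi_t = \tilde d\,\Delta_M\tilde\Psi + \tilde f$ a.e., the triangle inequality in $LL_a$ yields a bound on $\Vert\tilde\Psi_t\Vert_{LL_a}$ by the same right-hand side. Set $\Psi = \tilde\Psi|_{M_T}$; this is a strong (hence weak) solution of $(\ref{sys2})$ on $M_T$. To convert from weighted to unweighted norms, observe that $e^{-2at}\geq e^{-2aT}$ on $[0,T]$, so for any measurable function $q$ on $M\times[0,\infty)$,
\[\int_0^T \Vert q(\cdot,t)\Vert_{L^2(M)}^2\,dt \leq e^{2aT}\int_0^\infty e^{-2at}\Vert q(\cdot,t)\Vert_{L^2(M)}^2\,dt.\]
Applying this to each spatial derivative of $\tilde\Psi$ of order at most two and to $\tilde\Psi_t$, then summing, gives the desired $W^{2,1}_2(M_T)$ estimate, with a constant depending on $T$, $a$, and $\tilde d$ but independent of $\Psi_0$ and $f$.

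For uniqueness, any two solutions $\Psi_1,\Psi_2\in W^{2,1}_2(M_T)$ of $(\ref{sys2})$ with common data have difference $w$ solving $w_t = \tilde d\,\Delta_M w$ with $w(\cdot,0)=0$; pairing with $w$ in $L^2(M)$ and integrating by parts over the closed manifold $M$ gives $\tfrac{1}{2}\tfrac{d}{dt}\Vert w(\cdot,t)\Vert_{L^2(M)}^2 = -\tilde d\,\Vert\nabla_M w\Vert_{L^2(M)}^2\leq 0$, so $w\equiv 0$. The only substantive step in this outline is recovering the time-derivative control on $\tilde\Psi$: Theorem \ref{2} gives only the weighted spatial $W_2^2$ bound, so $\Vert\tilde\Psi_t\Vert_{LL_a}$ must be extracted from the equation itself. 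This is immediate once both $\Delta_M\tilde\Psi$ and $\tilde f$ are known to lie in $LL_a$, which is exactly what Theorem \ref{2} and the zero extension provide.
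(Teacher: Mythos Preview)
Your argument is correct and is precisely the natural way to make the paper's one-line justification (``an immediate consequence'' of Theorems~\ref{1} and~\ref{2}) rigorous: zero-extend $f$, apply the weighted global-in-time results, then strip the weight on $[0,T]$ via $e^{-2at}\ge e^{-2aT}$. Your observation that the time-derivative bound has to be recovered from the equation (since Theorem~\ref{2} controls only $LW_a^2$) is exactly the missing detail, and your energy argument for uniqueness is standard and sound.
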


We will use the $W_2^{2,1}(M_T)$ result to derive $W_p^{2,1}(M_T)$ a priori estimates for solutions to ($\ref{sys2}$) for all $ p>1$. To obtain these estimates, we transform the Cauchy problem defined locally on $M$ to a bounded domain on $\mathbb{R}^{n-1}$ and obtain the estimates  over this bounded domain. Then we pull the resulting estimates back to the manifold. Repeating this process over every neighborhood on the manifold, and using compactness of the manifold, we get estimates over the entire manifold. 

Let $\mathcal{F}$ be a subset of $\mathbb{R}_+$ with following property:\\
 $p>1$ belongs to $\mathcal{F}$ if and only if there exists $ C_{p,T}>0$ such that whenever $\Psi_0\in W^{2-\frac{2}{p}}_p(M)$ and $f\in L_p(M_T)$, then there exists a unique $\Psi\in W_p^{2,1}(M_T)$, such that $\Psi$ solves ($\ref{sys2}$) and 
\begin{eqnarray*}
\Vert \Psi\Vert_{p,M_T}^{(2)}\leq C_{p,T}(\Vert f\Vert_{p, M_T}+\Vert \Psi_0\Vert_{p,\p}^{(2-\frac{2}{p})})
\end{eqnarray*}
Note: From Corollary $\ref{bd2}$, $2\in\mathcal{F}$. Also note that we can prove Theorem $\ref{3}$ by showing $\mathcal{F}=(1,\infty)$.\\
\begin{lemma}\label{ap}
 $[2,\infty)\subset\mathcal{F}$.
\end{lemma}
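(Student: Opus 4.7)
The plan is to take Corollary \ref{bd2} as the base case and bootstrap the integrability exponent in finitely many steps via a localization argument combined with Lemma \ref{L1} and Lemma \ref{Hol}. Fix $p\ge 2$ together with data $f\in L_p(M_T)$ and $\Psi_0\in W^{2-2/p}_p(M)$. Since $M$ is compact and $2-2/p\ge 1$, we have $f\in L_2(M_T)$ and $\Psi_0\in W^1_2(M)$, so Corollary \ref{bd2} already yields a unique weak solution $\Psi\in W^{2,1}_2(M_T)$. It then suffices to promote the regularity of this $\Psi$ to $W^{2,1}_p(M_T)$ with the announced estimate; uniqueness in $W^{2,1}_p$ is inherited from uniqueness in $W^{2,1}_2$.

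To promote the integrability, I would cover $M$ by a finite atlas $\{(U_\alpha,\phi_\alpha)\}_{\alpha=1}^N$ of $C^{2+\mu}$ charts $\phi_\alpha:U_\alpha\to V_\alpha\subset\mathbb{R}^{n-1}$, choose a subordinate smooth partition of unity $\{\chi_\alpha\}$, and study the localized functions $w_\alpha(y,t):=(\chi_\alpha\Psi)\circ\phi_\alpha^{-1}(y,t)$. Using the local expression $\Delta_M u=g^{-1/2}\partial_i(g^{1/2}g^{ij}\partial_j u)$ together with the commutator $[\Delta_M,\chi_\alpha]$, each $w_\alpha$ satisfies a uniformly parabolic equation on $V_\alpha\times(0,T)$ with smooth coefficients and forcing
\[
F_\alpha=(\chi_\alpha f)\circ\phi_\alpha^{-1}+R_\alpha(\Psi,\nabla\Psi),
\]
where $R_\alpha$ is a first-order differential operator with smooth coefficients supported in $\overline{U_\alpha}$. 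Extending $w_\alpha$ by zero into a bounded smooth domain $\mathcal{B}\subset\mathbb{R}^{n-1}$, it attains zero Dirichlet data and vanishing initial data near $\partial\mathcal{B}$, so the zero-order compatibility hypothesis of Lemma \ref{L1} is automatic.

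Now set $p_0=2$ and, given $p_k\in\mathcal{F}$ with $p_k<p$, choose $p_{k+1}\in(p_k,p]$ via Lemma \ref{Hol} applied in dimension $m=n-1$: when $p_k<n+1$, any exponent with $1/p_{k+1}>1/p_k-1/(n+1)$ is admissible and ensures $\Psi,\nabla\Psi\in L_{p_{k+1}}(M_T)$ with norms controlled by $\Vert\Psi\Vert^{(2)}_{p_k,M_T}$; once $p_k\ge n+1$ one may take $p_{k+1}=p$ outright. The inductive hypothesis $p_k\in\mathcal{F}$, applied to $f\in L_p\subset L_{p_k}$ and $\Psi_0\in W^{2-2/p}_p\subset W^{2-2/p_k}_{p_k}$, places $\Psi$ in $W^{2,1}_{p_k}(M_T)$ with the corresponding estimate, and hence $F_\alpha\in L_{p_{k+1}}(\mathcal{B}\times(0,T))$ with norm dominated by $\Vert f\Vert_{p,M_T}+\Vert\Psi_0\Vert^{(2-2/p)}_{p,M}$. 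Since $(\chi_\alpha\Psi_0)\circ\phi_\alpha^{-1}\in W^{2-2/p_{k+1}}_{p_{k+1}}(\mathcal{B})$ by the elementary embedding $W^{2-2/p}_p\subset W^{2-2/p_{k+1}}_{p_{k+1}}$, Lemma \ref{L1} applied on $\mathcal{B}\times(0,T)$ with exponent $p_{k+1}$ delivers $w_\alpha\in W^{2,1}_{p_{k+1}}(\mathcal{B}\times(0,T))$ with the quantitative bound. Pulling back through $\phi_\alpha$ and summing via $\Psi=\sum_\alpha\chi_\alpha\Psi$ gives $\Psi\in W^{2,1}_{p_{k+1}}(M_T)$ with the required form of estimate, showing $p_{k+1}\in\mathcal{F}$. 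After finitely many steps the sequence $\{p_k\}$ reaches $p$.

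The main obstacle I anticipate is the bookkeeping in the bootstrap step: checking via Lemma \ref{Hol} that the commutator term $R_\alpha(\Psi,\nabla\Psi)$ does land in $L_{p_{k+1}}$ with norm dominated by $\Vert\Psi\Vert^{(2)}_{p_k,M_T}$ (the derivative exponent $s=1$ being the binding case), and assembling the finite cascade of embedding constants produced by iterating Lemma \ref{Hol} into a single $p$-dependent constant so that the final estimate depends only on $\Vert f\Vert_{p,M_T}$ and $\Vert\Psi_0\Vert^{(2-2/p)}_{p,M}$, rather than carrying any residual $W^{2,1}_{p_k}$-norm of $\Psi$ on its right-hand side.
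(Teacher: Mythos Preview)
Your proposal is correct and follows essentially the same route as the paper: localize to charts via cutoffs, observe that the commutator produces a forcing that is first-order in $\Psi$, use an embedding to place that forcing in a higher $L_q$, invoke Lemma \ref{L1} for the resulting Dirichlet problem, and patch over the compact manifold. The only cosmetic differences are that you use a partition of unity and the anisotropic parabolic embedding Lemma \ref{Hol} (giving the step $1/p_{k+1}>1/p_k-1/(n+1)$), whereas the paper uses a single cutoff at each point together with Lemma \ref{i} and an additive step $p\mapsto p+\tfrac{1}{n-1}$; both bootstrap schemes terminate after finitely many iterations.
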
\\
\begin{proof}
We will show that  if $p\in \mathcal{F}$ then $[p, p+\frac{1}{n-1}]\subset\mathcal{F}$. To this end, let $p\in\mathcal{F}$ and $q\in [p, p+\frac{1}{n-1}]$ such that $\Psi_0\in W^{2-\frac{2}{q}}_{q}(M)$ and $ f\in L_{q}(M_T)$. Then $ f\in L_{p}(M_T)$ and $\Psi_0\in W^{2-\frac{2}{p}}_p(M)$. Since $p\in\mathcal{F}$, there exists $ C_{p,T}>0$ independent of $\Psi_0$ and $f$, and a unique $\Psi \in W^{2,1}_{p}(M_T)$ solving ($\ref{sys2}$) such that 
\begin{eqnarray}\label{p1}
\Vert \Psi\Vert_{p,M_T}^{(2)}&\leq& C_{p,T}(\Vert f\Vert_{p, M_T}+\Vert \Psi_0\Vert_{p,\p}^{(2-\frac{2}{p})})
\end{eqnarray}

Let $B(0,1)$ be the open ball in $\mathbb{R}^{n-1}$ of radius $1$ centered at the origin. Now, M is a $C^2$ manifold. Therefore, for each point $\xi\in\p$ there exists an open set $V_\xi$ of $\p$ containing $\xi$ and a $C^{2}$ diffeomorphism $\phi_\xi:B(0,1) \overset{\text{onto}}{\longrightarrow} V_\xi$.  Let ${\Phi}=\Psi \circ \phi_\xi$, $\tilde f=f \circ \phi_\xi$ and  ${\Phi_0}=\Psi_0 \circ \phi_\xi$. Using the Laplace Beltrami operator (defined in \cite{RefWorks:50}), $(\ref{sys2})$ takes the form\begin{align}\label{nbd}
 \Phi_t&={\frac{\tilde d}{\sqrt{det \ g}}}{\partial_{j}(g^{ij}\sqrt{det{\ g}}\ \partial_{i} \Phi)}+\tilde f(x,t) & x\in B(0,1),&\quad 0<t<T\nonumber\\
\Phi&={\Phi_0}&x\in B(0,1) ,&\quad t=0
\end{align}
where $g$ is the metric on $\p$ and $g^{ij}$ is the ${i,j}^{th}$ entry of the inverse of 
the matrix corresponding to metric $g$. That is, in the bounded region $B(0,1)\times (0,T)$, we have
\begin{align}\label{nodf}
\mathcal{ L}(\Phi) = \Phi_t - \displaystyle\sum\limits_{i,j=1}^{n-1} a_{ij}\Phi_{{x_{i}}{x_{j}}} + \displaystyle\sum\limits_{i=1}^{n-1} a_{i}\Phi_{x_{i}}&=\tilde f\\
{\Phi\big|}_{t=0}& = {\Phi_0}
\end{align}
where,\[a_{ij}=\tilde d \ g^{ij}\]
\[ a_{i}= {\frac{-\tilde d}{\sqrt{det\ g}}}{\partial_{j}(g^{ij}\sqrt{det{\ g}})}\]
Note $\Psi\in W^{2,1}_{p}(M_T)$ implies $\Phi\in W^{2,1}_{p}(B(0,1)\times(0,T)).$
Take $0<2r<1$ and define a cut off function $\psi\in C_{0}^{\infty}({\mathbb{R}}^{n-1},[0,1])$ such that,
\begin{align}\label{cut} \psi(x)=\begin{cases}
1 &   \forall  x\in B(0,r)\\
0 &   \forall x\in {\mathbb{R}}^{n-1} \backslash {B(0,2r)} \end{cases}\end{align} 
In $Q=B(0,2r)$, $Q_T=B(0,2r)\times(0,T)$ and $S_T=\partial B(0,r)\times(0,T)$, $w=\psi \Phi $ satisfies the equation
\begin{align*}
  \frac{\partial w }{\partial t} - \displaystyle\sum\limits_{i,j=1}^{n-1} a_{ij}\frac{\partial^{2}w}{\partial x_{i}\partial x_{j}}+\displaystyle\sum\limits_{i=1}^{n-1} a_{i}\frac{\partial w}{\partial x_{i}} &=\theta & (x,t)\in Q_T\\
 w&=0 &(x,t)\in S_T\\
{ w\big|}_{t=0} &=\psi {\Phi_0} & t=0, x \in Q
\end {align*}
where,\[ \theta=\tilde f\psi-2 \displaystyle\sum\limits_{i=1}^{n-1} a_{ij}\frac{\partial \Phi}{\partial x_{i}}\frac{\partial\psi }{\partial x_{j}}-\Phi\displaystyle\sum\limits_{i,j=1}^{n-1} a_{ij}\frac{\partial^{2}\psi }{\partial x_{i}\partial x_{j}}+\Phi\displaystyle\sum\limits_{i=1}^{n-1} a_{i}\frac{\partial\psi}{\partial x_{i}}\]
Since $\psi\in C_{0}^{\infty}({\mathbb{R}}^{n-1},[0,1])$ and $\Phi\in W^{2,1}_p(B(0,1)\times(0,T))$, therefore $\theta-\tilde f\psi\in W^{1,1}_{p}(Q_T)$.

Case 1.  Suppose $p<n$. From Lemma $\ref{i}$, $\theta-\tilde f\psi\in L_{\min\lbrace q,\ p+\frac{p^2}{n-p}\rbrace}(Q_T)$. In particular since $p+\frac{1}{n-1}<p+\frac{p^2}{n-p}$, and $\tilde f\psi\in L_{q}(Q_T)$, we have $\theta\in L_{q}(Q_T)$. As a result
\begin{eqnarray}
\nonumber \Vert\theta\Vert_{q, Q_T}&\leq& \Vert \tilde f\psi\Vert_{q, Q_T}+C_1 \Vert \Phi\Vert_{q, Q_T}+C_2 \Vert \Phi_x\Vert_{q, Q_T}\\
\nonumber &\leq& \Vert \tilde f\psi\Vert_{q, Q_T}+C_1 \Vert \Phi\Vert_{q, Q_T}+C_2 \Vert \Phi_{x}\Vert^{(1)}_{p, Q_T}
\end{eqnarray}
where $C_1, C_2>0$ are independent of $f$. Now in order to estimate $\Vert\Phi_x \Vert^{(1)}_{p,Q_T}$,  apply the change of variable \begin{eqnarray*}
\Vert \Phi_x\Vert_{p,Q_T}^{(1)}=\Vert \Psi_x |\det (({\phi_\xi^{-1}})^{'})| \Vert_{p,{(\phi_\xi(Q))}_T}^{(1)}\end{eqnarray*} and using $(\ref{p1})$,  we get
 \begin{eqnarray*}
\Vert\theta\Vert_{q, Q_T}\leq\Vert \tilde f\psi\Vert_{q, Q_T}+C_1 \Vert \Phi\Vert_{q, Q_T}+{C_2}_{p,T}(\Vert f\Vert_{p, M_T}+\Vert \Psi_0\Vert_{p,\p}^{(2-\frac{2}{p})})
\end{eqnarray*}
where ${C_2}_{p,T}>0$ is independent of $f$ and $\Psi_0$. At this point, we need an estimate on $\Vert \Phi\Vert_{q,Q_T}$. Again $\Vert \Phi\Vert_{q,Q_T}=\Vert \Psi |\det (({\phi_\xi}^{-1})^{'})|\Vert_{q,{(\phi_\xi(Q))}_T}$ and from  Lemma $\ref{i}$, \[\Vert \Psi |\det (({\phi_\xi}^{-1})^{'})|\Vert_{q,{(\phi_\xi(Q))}_T}\leq \tilde C\Vert \Psi |\det (({\phi_\xi}^{-1})^{'})|\Vert^{(1)}_{p,{(\phi_\xi(Q))}_T}\] Thus
 \begin{eqnarray}\label{hm}
\Vert\theta\Vert_{q, Q_T}\leq{K}_{p,T}(\Vert f\Vert_{p, M_T}+\Vert \Psi_0\Vert_{p,\p}^{(2-\frac{2}{p})})
\end{eqnarray}
where ${K}_{p,T}>0$ is independent of $f$ and $\Psi_0$.

Since $g_{i,j}$ are $C^1$ functions on the compact manifold $M$,  $a_{i,j }$ and $a_{i}$ satisfy the hypothesis (bounded continuous function in $\overline{Q_T})$ of Lemma $\ref{L1}$. Therefore using Lemma $\ref{L1}$, 
 \begin{eqnarray}\label{eq88}
 \Vert w\Vert_{q,Q_T}^{(2)}&\leq&  C_{q,T}(\Vert \theta\Vert_{q,Q_T}+\Vert \psi {\Phi_0}\Vert_{q,Q}^{(2-\frac{2}{q})})
 \end{eqnarray}
where $C_{q,T}>0$ is independent of $\theta$ and $\psi\Phi_0$.
Combining $(\ref{hm})$ and $(\ref{eq88})$ we get,\begin{eqnarray*}
 \Vert w\Vert_{q,Q_T}^{(2)}\nonumber&\leq&  C_{q,T}(\Vert \theta\Vert_{q,Q_T}+\Vert \psi \Phi_0\Vert_{q,Q}^{(2-\frac{2}{q})})\\\nonumber
 &\leq&  {\tilde K}_{p,T}(\Vert f\Vert_{p, M_T}+\Vert \Psi_0\Vert_{p,\p}^{(2-\frac{2}{p})}+\Vert \psi \Phi_0\Vert_{q,Q}^{(2-\frac{2}{q})})
 \end{eqnarray*}
where $\tilde K_{p,T}>0$ is independent of $f$, $\theta$ and $\psi\Phi_0$. Note that $w=\Phi$ on $W_T=B(0,r)\times(0,T)$. Thus
 \begin{align}\label{eq33}
 \Vert \Phi\Vert_{q,W_T}^{(2)}\leq   {\tilde K}_{p,T}(\Vert f\Vert_{p, M_T}+\Vert \Psi_0\Vert_{p,\p}^{(2-\frac{2}{p})}+\Vert \psi \Phi_0\Vert_{q,Q}^{(2-\frac{2}{q})})
 \end{align}
Observe $(\ref{eq33})$ is over $B(0,r)\times(0,T)\subset\mathbb{R}^{n-1}\times \mathbb{R}_+$. To get the estimate back on the manifold, apply the change of variable, $\Vert \Phi\Vert_{q,W_T}^{(2)}=\Vert \Psi |\det (({\phi^{-1}})^{'})| \Vert_{q,\phi ( W_T)}^{(2)}$ and using  first mean value theorem of integration there exist $\hat\xi\in \phi(W_T)$, and $\tilde K_{p,T,\hat\xi}$ such that \begin{eqnarray}\label{eqr}
  \Vert \Psi\Vert_{q,\phi(W_T)}^{(2)}&\leq&  {\tilde K}_{p,T,\hat\xi}(\Vert f\Vert_{p, M_T}+\Vert \Psi_0\Vert_{p,\p}^{(2-\frac{2}{p})}+\Vert  \Psi_0\Vert_{q,\phi(Q)}^{(2-\frac{2}{q})})
 \end{eqnarray}

 So far, an estimate in one open neighborhood of some point $\xi \in\p$ is obtained.
As one varies the point $\xi$  on $\p$, there exist corresponding open neighborhoods $V_\xi$ and a  smooth diffemorphisms $\phi_{\xi}:B(0,r){\longrightarrow} V_\xi$, which results in different $\tilde K_{p,T,\hat\xi}$ for every $V_\xi$. Consider an open cover of $\p$ such that $\p=\bigcup_{\substack{\xi\in\p}}V_{\xi}$. 
Since $\p$ is compact, there exists $\lbrace \xi_1,\xi_2,...,\xi_N \rbrace$ such that  $M\subset\bigcup_{\substack{\xi_j\in M\\ 1\leq j\leq N}}V_{\xi_j}$ and $\tilde K_{p,T,\hat\xi_j}$ corrresponding to each $V_{\xi_j}$. 
Let, $ C_{p,M,T}= \sum_{\substack{1\leq j\leq N}} \tilde K_{p,T,\hat\xi_j}$. Inequality $(\ref{eqr})$ implies

\begin{eqnarray*}
 \Vert \Psi\Vert_{q,M_T}^{(2)}&\leq& C_{p,M,T}(\Vert f\Vert_{q, M_T}+\Vert \Psi_0\Vert_{q,M}^{(2-\frac{2}{q})})
 \end{eqnarray*}
Thus $[p,p+\frac{1}{n-1}]\subset\mathcal{F}$.

Case 2. Suppose $p\geq n$.

 By Lemma $\ref{i}$ and Theorem 4.12 in \cite{RefWorks:76}, if $q\in[p,\infty)$, $\Psi_0\in W^{2-\frac{2}{q}}_q(M)$, and $f\in L_q(M_T)$ then $\theta\in L_{ q}(Q_T)$, and proceeding similarly to Case 1, we get
\begin{eqnarray*}
 \Vert \Psi\Vert_{q,M_T}^{(2)}&\leq&  C_{q,M,T}(\Vert f\Vert_{q, M_T}+\Vert \Psi_0\Vert_{q,M}^{(2-\frac{2}{q})})
 \end{eqnarray*}
where $C_{q,M,T}>0$ is independent of $f$, $\theta$ and $\psi\Phi_0$. Hence $[2,  \infty)\subset\mathcal{F}$.\end{proof}\\
\\
{\bf Proof of Theorem $\ref{3}$:} From Lemma $\ref{ap}$, we have $[2,  \infty)\subset\mathcal{F}$. It remains to show that $(1,2)\subset\mathcal{F}$. Let $1<p<2$ , $f\in L_p(M_T)$ and $\Psi_0\in W^{2-\frac{2}{p}}(M)$. Since $C^\infty(\overline {M_T})$ is dense in $L_p(M_T)$ and $C^{\infty}(\overline M)$ is dense in $W_p^{2-\frac{2}{p}}(M)$, there exist a sequences of functions $\lbrace{f_k\rbrace}\subseteq C^{\infty}(\overline {M_T})$ and $\lbrace{{\Psi_0}_k\rbrace}\subseteq C^{\infty}(\overline M)$ such that $f_k$ converges to $ f$ in $L_p(M_T)$ and ${\Psi_0}_k$ converges to $\Psi_0$ in $W_p^{2-\frac{2}{p}}(M)$. Define a sequence $\lbrace\Psi_k\rbrace$ such that, 
\begin{align}\label{k}
 \Psi_{k_t }\nonumber&= \tilde d \Delta_{\p} \Psi_k+f_k &\xi\in \p,\quad & 0<t<T\\ 
 {\Psi_k} &= {\Psi_0}_k\hspace{1.2in}  &\xi\in\p ,\quad & t=0
\end{align}
Now, transform system $(\ref{k})$ over a bounded region in $\mathbb{R}^{n-1}$. Similar to the proof of Lemma $\ref{ap}$, for each point $\xi\in M$ there exists an open set $V_{\xi}$ of $M$ containing $\xi$ and a $C^2$ diffeomorphism $\phi_{\xi}:B(0,1)\overset{\text{onto}}{\longrightarrow} V_{\xi}$. Corresponding to each $k$, let $\tilde f_k=f_k\circ \phi_\xi$, ${\Phi_0}_k={\Psi_0}_k \circ \phi_\xi$ and using the Laplace Beltrami operator, $(\ref{k})$ on $B(0,1)\subset U$ takes the form
\begin{align}\label{rk}
 {\Phi_k}_t&={\frac{\tilde d}{\sqrt{det \ g}}}{\partial_{j}(g^{ij}\sqrt{det\ {g}}\ \partial_{i} \Phi_k)}+\tilde f_k & x\in B(0,1),&\quad 0<t<T\\
\Phi_k&={\Phi_0}_k\hspace{2.4in}  &x\in B(0,1) ,&\quad t=0\nonumber
\end{align}

Consequently, in a bounded region $B(0,1)\times (0,T)$ of the Euclidean space, we consider ($\ref{rk}$) in the nondivergence form defined in ($\ref{nodf}$) for each $\Phi_k$, with $\tilde f$ replaced by $\tilde f_k$ and $\Phi_0$ by ${\Phi_0}_k$. 

Taking $0<2r<1$, using a cut off function $\psi\in C_{0}^{\infty}({\mathbb{R}}^{n-1},[0,1])$ defined in $(\ref{cut})$, and defining $Q=B(0,2r)$, $Q_T=B(0,2r)\times(0,T)$, $S_T=\partial B(0,r)\times(0,T)$, and $w_k=\psi \Phi_k $, we see that
\begin{align*}
  \frac{\partial w_k }{\partial t} - \displaystyle\sum\limits_{i,j=1}^{n-1} a_{ij}\frac{\partial^{2}w_k}{\partial x_{i}\partial x_{j}}+\displaystyle\sum\limits_{i=1}^{n-1} a_{i}\frac{\partial w_k}{\partial x_{i}} &=\theta_k & (x,t)\in Q_T\\
 w_k&=0 &(x,t)\in S_T\\
{ w_k\big|}_{t=0} &=\psi {\Phi_0}_k & t=0,  x \in Q
\end {align*}where,
\[\theta_k=\tilde f_k\psi -2 \displaystyle\sum\limits_{i=1}^{n-1} a_{ij}\frac{\partial \Phi_k}{\partial x_{i}}\frac{\partial\psi }{\partial x_{j}}-\Phi_k\displaystyle\sum\limits_{i,j=1}^{n-1} a_{ij}\frac{\partial^{2}\psi }{\partial x_{i}\partial x_{j}}+\Phi_k\displaystyle\sum\limits_{i=1}^{n-1} a_{i}\frac{\partial\psi}{\partial x_{i}}\]
Note that $f_k$ and ${\Psi_0}_k $ are smooth functions. Therefore Lemma $\ref{ap}$ guarantees $\Phi_k\in W^{2,1}_q(Q_T)$ for all $q\geq2$. Thus $\theta_k\in L_q(Q_T)$ for all $q\geq 2$. 
Recall $\psi\in C_{0}^{\infty}({\mathbb{R}}^{n-1},[0,1])$. Using Lemma $\ref{L4}$ for $\epsilon>0$, there exists $c_{\epsilon}>0$ such that
\begin{align}\label{eq5}
\nonumber \Vert\theta_k\Vert_{p, Q_T}&\leq\Vert \tilde f_k\psi\Vert_{p, Q_T}+M_1 \Vert \Phi_k\Vert_{p, Q_T}+ M_2 \Vert {\Phi_k}_x\Vert_{p, Q_T}\\
\nonumber &\leq  \Vert \tilde f_k\Vert_{p, Q_T}+M_1 \Vert \Phi_k\Vert_{p, Q_T}\\
 &\quad+M_2 (\epsilon\Vert {\Phi_k}_{xx}\Vert_{p, Q_T}+c_{\epsilon} \Vert \Phi_k\Vert_{p, Q_T})\quad\quad
\end{align}
Here $M_1, M_2>0$ are independent of $f$ and $\Psi_0$. At this point we need an estimate for $\Vert \Phi_k\Vert_{p,Q_T}$. From Lemma $\ref{i}$ for $1<p \leq n<q$ there exists $C_{\epsilon}>0$ such that 
\begin{align*}
{\Vert \Phi_k\Vert}^{p}_{L_{\frac{pq}{q-p}}(Q_T)}\nonumber &\leq \epsilon ({\Vert {\Phi_k}_{x}\Vert}^{p}_{p, Q_T}+ {\Vert {\Phi_k}_{t}\Vert}^{p}_{p, Q_T} ) + C_{\epsilon}{\Vert \Phi_k\Vert}^{p}_{{1, Q_T}}
\end{align*}
Since $p<\frac{pq}{q-p}$,  from H\"{o}lder's inequality, $\epsilon$ and $C_\epsilon$ get scaled to $\tilde \epsilon>0$ and $C_{\tilde \epsilon}>0$ (with $\tilde \epsilon\rightarrow 0^+$ as $\epsilon\rightarrow 0^+$), and
\begin{align}\label{eq6}
 \Vert \Phi_k\Vert_{p,Q_T}&\leq \tilde\epsilon( \Vert {\Phi_k}_t\Vert_{p,Q_T}+ \Vert {\Phi_k}_x\Vert_{p,Q_T})\\ \nonumber
&\quad+C_{\tilde\epsilon}{\Vert \Phi_k\Vert}_{{1,Q_T} }\hspace{.8in}
\end{align}
From $(\ref{eq5})$ and $(\ref{eq6})$,
\begin{align}
  \Vert\theta_k\Vert_{p, Q_T} \nonumber&\leq (M_1+M_2 c_{\epsilon}) ( \tilde\epsilon( \Vert {\Phi_k}_t\Vert_{p,Q_T}+ \Vert {\Phi_k}_{x}\Vert_{p,Q_T})+C_{\tilde\epsilon}{\Vert \Phi_k\Vert}_{{1, Q_T }})\\ \nonumber
&\quad+ {\Vert \tilde f_k\Vert_{p, Q_T}}+M_2 \epsilon{\Vert {\Phi_k}_{xx}\Vert_{p, Q_T}}
\end{align}

Recall $g_{i,j}$ are $C^1$ functions on the compact manifold $M$. Therefore $a_{i,j }$ and $a_{i}$ satisfy the hypothesis (bounded continuous function in $\overline{Q_T})$ of Lemma $\ref{L1}$. Using Lemma $\ref{L1}$ for $ p\ne\frac{3}{2}$, 
 \begin{align}\label{eq8}
 \Vert w_k\Vert_{p,Q_T}^{(2)}&\leq C_{p,T}(\Vert \theta_k\Vert_{p,Q_T}+\Vert \psi {\Phi_0}_k\Vert_{p,Q}^{(2-\frac{2}{p})})
 \end{align}
where $C_{p,T}$ is independent of $\theta$ and $\psi\Phi_0$.
Combining $(\ref{eq5})$ and $(\ref{eq8})$, we get \begin{align*}
 \Vert w_k\Vert_{p,Q_T}^{(2)}\nonumber&\leq  C_{p,T}(\Vert \theta_k\Vert_{p,Q_T}+\Vert \psi {\Phi_0}_k\Vert_{p,Q}^{(2-\frac{2}{p})})\\\nonumber
 &\leq  C_{p,T}\lbrace\Vert \tilde f_k\Vert_{p, Q_T}+M_2 \epsilon\Vert {\Phi_k}_{xx}\Vert_{p, Q_T}\\ \nonumber&\quad +(M_1+M_2 c_{\epsilon}) ( \tilde\epsilon( \Vert {\Phi_k}_t\Vert_{p,Q_T}+ \Vert {\Phi_k}_x\Vert_{p,Q_T})+C_{\tilde\epsilon}{\Vert \Phi_k\Vert}_{{1, Q_T }})\\
&\quad+\Vert \psi {\Phi_0}_k\Vert_{p,Q}^{(2-\frac{2}{p})}\rbrace
\end{align*}
Note that $w_k=\Phi_k$ on $W_T=B(0,r)\times(0,T)$. Thus
\begin{align}\label{eq3}
 \Vert \Phi_k\Vert_{p,W_T}^{(2)} \nonumber\leq  C_{p,T}\lbrace &\Vert \tilde f_k\Vert_{p, Q_T}+M_2 \epsilon\Vert {\Phi_k}_{xx}\Vert_{p, Q_T}\\ \nonumber& +(M_1+M_2 c_{\epsilon}) ( \tilde\epsilon( \Vert {\Phi_k}_t\Vert_{p,Q_T}+ \Vert {\Phi_k}_x\Vert_{p,Q_T})+C_{\tilde\epsilon}{\Vert \Phi_k\Vert}_{{1, Q_T }})\\
&+\Vert \psi {\Phi_0}_k\Vert_{p,Q}^{(2-\frac{2}{p})}\rbrace
 \end{align}
Observe $(\ref{eq3})$ is over $B(0,r)\times(0,T)\subset\mathbb{R}^{n-1}\times \mathbb{R}_+$. To get an estimate on the manifold, apply the change of variable, $\Vert \Phi_k\Vert_{p,W_T}^{(2)}=\Vert \Psi_k |\det (({\phi^{-1}})^{'})| \Vert_{p,\phi ( W_T)}^{(2)}$  and using  first mean value theorem of integration there exist $\hat\xi\in \phi(W_T)$, and $\tilde C_{p,T,\hat\xi}$ such that
 \begin{align}\label{eq9}
  \Vert {\Psi_k}\Vert_{p,\phi(W_T)}^{(2)} \nonumber&\leq \tilde C_{p,\xi,T} \lbrace\Vert  f_k\Vert_{p, \phi( Q_T)}+M_2 \epsilon\Vert {\Psi_k}_{xx}\Vert_{p, \phi( Q_T)}\\ \nonumber&\quad+(M_1+M_2 c_{\epsilon}) ( \tilde\epsilon( \Vert {\Psi_k}_t\Vert_{p,\phi( Q_T)}+ \Vert {\Psi_k}_x\Vert_{p,\phi( Q_T)})+C_{\tilde\epsilon}{\Vert {\Psi_k}\Vert}_{{1,(\phi( Q_T))} })\\
&\quad+\Vert  {\Psi_0}_k\Vert_{p,\phi(Q)}^{(2-\frac{2}{p})}\rbrace
 \end{align}
 So, an estimate in an open neighborhood of a point $\xi \in\p$ can be obtained.
As one varies the point $\xi$  on $\p$, there exist corresponding open neighborhoods $V_\xi$ and a  smooth diffemorphisms $\phi_{\xi}:B(0,r)\longrightarrow V_\xi$, which result in different $\tilde C_{p,\hat\xi,T}$ for every $V_\xi$. Consider an open cover of $\p$ such that $\p=\bigcup_{\xi\in\p}V_{\xi}$.
Since $\p$ is compact, there exists $\lbrace \xi_1,\xi_2,...,\xi_N \rbrace$ such that  $M\subset\bigcup_{\substack{\xi_j\in M\\ 1\leq j\leq N}}V_{\xi_j}$ and $\tilde C_{p,\hat\xi_j,T}$ corrresponding to each $V_{\xi_j}$. Let $\hat C_{p,T}= \sum_{\substack{1\leq j\leq N}} \tilde C_{p,\hat\xi_j,T}$. Inequality $(\ref{eq9})$ implies
 \begin{align}\label{eq2}
  \Vert {\Psi_k}\Vert_{p,M_T}^{(2)} \nonumber&\leq \hat C_{p,T} \lbrace\Vert  f_k\Vert_{p, M_T}+M_2 \epsilon\Vert {\Psi_k}_{xx}\Vert_{p, M_T}\\ \nonumber&\quad+(M_1+M_2 c_{\epsilon}) ( \tilde\epsilon( \Vert {\Psi_k}_t\Vert_{p,M_T}+ \Vert {\Psi_k}_x\Vert_{p,M_T})+C_{\tilde\epsilon}{\Vert {\Psi_k}\Vert}_{{1,M_T} })\\
&\quad+\Vert  {\Psi_0}_k\Vert_{p,M}^{(2-\frac{2}{p})}\rbrace
 \end{align}
Also, a simple calculation gives
\begin{align*}
\Vert {\Psi_k}\Vert_{1,M_T}&\leq\Vert  f_k\Vert_{1,M_T}+ \Vert {\Psi_0}_k\Vert_{1,M}
\end{align*}
 Now, choose $\epsilon>0$ such that,

\[\max\lbrace\hat C_{p,T}M_2\epsilon ,\quad  \hat C_{p,T}\tilde\epsilon(M_1+M_2c_{\epsilon})\rbrace<\frac{1}{2}\]
For this choice of $\epsilon$, $(\ref{eq2})$  gives the $W^{2,1}_p$ estimates
\begin{align*}
 \Vert {\Psi_k}\Vert_{p,M_T}^{(2)}&\leq\hat C_{p,T}(\Vert f_k\Vert_{p, M_T}+C_\epsilon (\Vert  f_k\Vert_{1,M_T}+ \Vert {\Psi_0}_k\Vert_{1,M})+\Vert  {\Psi_0}_k\Vert_{p,M}^{(2-\frac{2}{p})})
 \end{align*}
\begin{align}\label{seqk}
 \Vert {\Psi_k}\Vert_{p,M_T}^{(2)}&\leq\hat  K_{p,T}(\Vert f_k\Vert_{p, M_T}+\Vert  {\Psi_0}_k\Vert_{p,M}^{(2-\frac{2}{p})})\end{align}
where $\hat K_{p,T}>0$ is independent of $f_k$ and ${\Psi_0}_k$. It remains to show that the sequence $\lbrace{\Psi_k\rbrace}$ converges to a function $\Psi$ in $W^{2,1}_p(M_T)$, and $\Psi$ solves $(\ref{sys2})$. From linearity and $(\ref{seqk})$,  if $m,l\in \mathbb{N}$ then  $\Psi_m-\Psi_l$ satisfies
\begin{align*}
 ({\Psi_m} -{\Psi_l})_t &= \tilde d \Delta_{\p} (\Psi_m-\Psi_l)+f_m-f_l & \xi\in \p,\quad &0<t<T\\ \nonumber
 \Psi_m-\Psi_l& = {\Psi_0}_m-{\Psi_0}_l&  \xi\in\p ,\quad& t=0
\end{align*}
and
\begin{align*}
 \Vert \Psi_m-\Psi_l\Vert_{p, M_T}^{(2)}&\leq\hat K_{p,T}(\Vert f_m-f_l\Vert_{p, M_T}+\Vert{\Psi_0}_m-{\Psi_0}_l\Vert_{p,M}^{(2-\frac{2}{q})})
 \end{align*}
This implies $\lbrace{ \Psi_k\rbrace}$ is a Cauchy sequence in $W^{2,1}_p(M_T)$, so there is a function $\psi\in W^{2,1}_p(M_T)$ such that $\Psi_k\rightarrow\Psi$. Then $f_k$ converges to $f$ in $ L_p(M_T)$, ${\Psi_0}_k$ converges to $\Psi_0$ in $W_p^{2-\frac{2}{p}}(M)$, and $\Psi_k$ converges to $\Psi\in W^{2,1}_p(M_T)$. Therefore $\Psi$ solves $(\ref{sys2})$, and $(\ref{seqk})$ implies \begin{align*}
 \Vert \Psi\Vert_{p,M_T}^{(2)}&\leq\hat  K_{p,T}(\Vert f\Vert_{p, M_T}+\Vert \Psi_0\Vert_{p,M}^{(2-\frac{2}{p})})
 \end{align*} Hence
 $\mathcal{F}=(1,\infty)$, and the proof of Theorem $\ref{3}$ is complete.

\section{H\"{o}lder Estimates for the Neumann problem}
\setcounter{equation}{0}

The following result is a version of Theorem 9.1 with Neumann boundary conditions, referred to in chapter 4 of \cite{RefWorks:65} on page 351. 
\begin{lemma}\label{L1.5}
Let p $ > 1$. Suppose $ \theta\in L_{p}{(\Omega\times(0, T))}$, $\varphi_0\in W_p^{(2-\frac{2}{p})}(\Omega)$ and $\gamma\in W_{p}^{1-\frac{1}{p},\frac{1}{2}-\frac{1}{2p}}(M\times(0, T))$ with $p\neq 3$ . In addition, when $p>3$ assume \[  d\frac{\partial {\varphi_0}}{\partial \eta} =\gamma\quad{\text {on $M\times\lbrace 0\rbrace$}}\] Then $(\ref{m2})$ has a unique solution $ \varphi\in W_{p}^{2,1}{(\Omega\times(0, T))}$ and there exists $C$ dependent upon $\Omega, p,  T$, and independent of $\theta, \varphi_0$ and $\gamma$ such that
\[{\Vert \varphi\Vert}_{p,(\Omega\times(0, T))}^{(2)}\leq C( {\Vert \theta\Vert}_{p,(\Omega\times(0, T))}+{\Vert \varphi_0\Vert}_{p,\Omega}^{(2-\frac{2}{p})}+{\Vert \gamma\Vert}_{p,(\partial\Omega\times(0, T))}^{(1-\frac{1}{p},\frac{1}{2}-\frac{1}{2p})})\]\end{lemma}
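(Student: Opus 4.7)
The plan is to reduce the inhomogeneous Neumann problem to a homogeneous one by subtracting a suitable lifting of the boundary and initial data, and then to solve the reduced problem by maximal $L_p$ regularity for the homogeneous Neumann heat equation. This is precisely the strategy used for Theorem 9.1 of Chapter 4 in \cite{RefWorks:65}, modified to accommodate Neumann rather than Dirichlet boundary conditions, so one expects the same technical machinery to suffice.

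The first step is to invoke the trace theory for anisotropic Sobolev--Slobodeckij spaces on parabolic cylinders (Chapter 2 of \cite{RefWorks:65}): since $\gamma \in W_p^{1-1/p,\,1/2-1/(2p)}(M\times(0,T))$ is exactly the boundary trace space for the normal derivative of a $W_p^{2,1}(\Omega\times(0,T))$ function, a right inverse for this trace exists, producing $\psi \in W_p^{2,1}(\Omega\times(0,T))$ with $\psi(\cdot,0) = \varphi_0$, $d\,\partial_\eta \psi = \gamma$ on $M\times(0,T)$, and
\[
\|\psi\|_{p,\Omega\times(0,T)}^{(2)} \leq C\left(\|\varphi_0\|_{p,\Omega}^{(2-2/p)} + \|\gamma\|_{p,M\times(0,T)}^{(1-1/p,\,1/2-1/(2p))}\right).
\]
The threshold $p>3$ enters here because the normal derivative of a function in $W_p^{2-2/p}(\Omega)$ has a trace on $M$ exactly when $2-2/p-1 > 1/p$; the compatibility hypothesis $d\,\partial_\eta \varphi_0 = \gamma|_{t=0}$ is what allows the two traces of $\psi$ to be prescribed simultaneously at the parabolic corner $M\times\{0\}$.

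Next I would set $\hat\varphi := \varphi - \psi$, so that $\hat\varphi$ solves a heat equation on $\Omega\times(0,T)$ with right-hand side $\theta - \psi_t + d\Delta\psi \in L_p(\Omega\times(0,T))$, homogeneous Neumann boundary data, and zero initial condition. For this reduced problem one obtains a unique $\hat\varphi \in W_p^{2,1}(\Omega\times(0,T))$ with $\|\hat\varphi\|_{p,\Omega\times(0,T)}^{(2)} \leq C\|\theta - \psi_t + d\Delta\psi\|_{p,\Omega\times(0,T)}$ via the standard partition-of-unity argument, flattening the boundary patches and invoking Solonnikov's $L_p$ estimates for the model Neumann half-space problem (whose solution is given by an explicit Poisson-type kernel whose $L_p$ bounds are classical). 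Setting $\varphi = \hat\varphi + \psi$ and combining the two estimates yields the claimed bound, while uniqueness follows by applying the same estimate to the difference of any two solutions.

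The main obstacle is producing the trace lifting $\psi$ with sharp anisotropic regularity, since one must reconcile the prescribed normal-derivative trace on $M\times(0,T)$ with the prescribed initial trace on $\Omega\times\{0\}$ at their intersection $M\times\{0\}$; this is a purely functional-analytic fact about fractional parabolic Sobolev spaces, and it is the only step genuinely specific to the Neumann setting. Once this lifting is in hand, the remainder of the argument parallels the Dirichlet proof of Lemma \ref{L1} verbatim.
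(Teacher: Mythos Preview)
Your outline is correct and follows the standard route, but you should know that the paper does not actually prove this lemma at all: it is stated immediately after the sentence ``The following result is a version of Theorem 9.1 with Neumann boundary conditions, referred to in chapter 4 of \cite{RefWorks:65} on page 351,'' and no proof is supplied. So the paper simply cites Ladyzhenskaya--Solonnikov--Uraltseva for the result, whereas you have sketched the proof that one would find (in essence) in that reference: lift the Neumann and initial data via the anisotropic trace theory of Chapter~2, reduce to a homogeneous Neumann problem with $L_p$ forcing, and handle that by localization and Solonnikov's half-space estimates. Your identification of the role of the compatibility condition at $p>3$ is accurate. In short, your proposal supplies what the paper omits by citation; there is no discrepancy in method, only in level of detail.
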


\begin{definition}
$\varphi$ is said to be a weak solution of system $(\ref{m2})$ from $V_2^{1,\frac{1}{2}}(\Omega_{ T})$ if and only if \begin{align*}
-\int_0^{T} \int_\Omega \varphi \nu_t &- \int_0^{ T} \int_{\partial\Omega} d\ \nu\frac{\partial \varphi}{\partial\eta}+\int_0^{T} \int_\Omega d\ \nabla\nu.\nabla\varphi-\int_0^{ T} \int_\Omega\theta\nu\ \\
&= \int_{\Omega}\nu(x,0)\varphi(x,0)\end{align*}for any $\nu \in W_2^{1,1}(\Omega_{ T})$ that is equal to zero for $t= T$. 
\end{definition}\\

We also need a notion of solution of ($\ref{sy3}$) which was first introduced in the study of Dirichlet and Neumann problems for the Laplace operator in a bounded $C^1$ domain by Fabes, Jodeit and Rivier \cite{RefWorks:106}. They used Calderon's result in \cite{RefWorks:107} on $L^p$ continuity of Cauchy integral operators for $C^1$ curves. Further in \cite{RefWorks:105},  Fabes and Riviere constructed solutions to the initial Neumann problem for the heat equation satisfying the zero initial condition in the form of a single layer heat potential, when densities belong to $L_p(M\times (0,T))$, $1<p<\infty$.  We will consider the solution to $(\ref{sy3})$ in the sense of one which is constructed in \cite{RefWorks:105}. 

The following result plays a crucial role for that construction of solution to make sense, and is proved in \cite{RefWorks:105}.\\

\begin{proposition} Assume $\Omega$ is a $C^1$ domain and for $ Q\in{M}$, $\eta_Q$ being the unit outward normal to $M$ at $Q.$ For $0<\epsilon<t$ set \[J_{\epsilon}(f)(Q,t)=  \int_0^{t-\epsilon}\int_{M}\frac{\langle y-Q,\eta_Q\rangle}{(t-s)^{\frac{n}{2}+1}}\exp \left(-\frac{\vert Q-y\vert^2}{4(t-s)}\right)f(s,y)\ d\sigma \ ds\] Then
\begin{enumerate}
\item For every $1<p<\infty$ there exists $C_p>0$ such that $\sup_{0<\epsilon<t}\vert J_{\epsilon}(f)(Q,t)\vert= J(f)(Q,t)$ satisfies \[\Vert  J (f)\Vert_{L_{p}(M\times(0, T))}\leq C_p\Vert f\Vert_{L_{p}(M\times(0, T))} \text{ for all } f\in L_p(M\times (0,  T))\]
\item $ \lim_{\epsilon\rightarrow 0^{+}}J_{\epsilon}(f)=J(f)$ exists in $L_p(M\times(0, T))$ and pointwise for almost every $(Q,t)\in (M\times(0, T))$ provided $f\in L_p(M\times(0, T)), 1<p<\infty$.
\item $ c_n I+J$ is invertible on  $L_p(M\times(0, T))$ for each $1<p<\infty$ and $c_n\neq 0$.
\end{enumerate}
\end{proposition}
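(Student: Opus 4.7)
The plan is to treat $J_\epsilon$ as a family of truncated parabolic singular integrals associated with the normal derivative of the heat kernel, and to apply the parabolic analogue of Calderón's theory on $C^1$ surfaces. Writing $\Gamma(z,\tau) = (4\pi\tau)^{-n/2}\exp(-|z|^2/(4\tau))$ for the Gauss--Weierstrass kernel, one has
\[ \partial_{\eta_Q}\Gamma(Q-y,t-s) \;=\; c\,\frac{\langle y-Q,\eta_Q\rangle}{(t-s)^{n/2+1}}\exp\!\left(-\frac{|Q-y|^2}{4(t-s)}\right), \]
so $J_\epsilon f$ is (up to a constant) the truncated double-layer heat potential applied to $f$. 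The key geometric input is that $\partial\Omega$ is $C^1$, which gives a modulus of continuity $\omega$ of the unit normal with $|\langle y-Q,\eta_Q\rangle| \leq \omega(|y-Q|)\,|y-Q|$; combined with the Gaussian factor this makes the kernel a parabolic Calderón--Zygmund kernel in the anisotropic metric $\rho((Q,t),(y,s)) = \max\{|Q-y|,\sqrt{|t-s|}\}$.

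For (1), I would establish a Cotlar-type pointwise bound of the form $\sup_\epsilon |J_\epsilon f(Q,t)| \leq C\,M_{\mathrm{par}}(Jf)(Q,t) + C\,M_{\mathrm{par}}(f)(Q,t)$, where $M_{\mathrm{par}}$ denotes the Hardy--Littlewood maximal operator associated to parabolic balls on $M\times(0,T)$. Since $M_{\mathrm{par}}$ is bounded on $L^p$ for all $1<p<\infty$, the estimate reduces to $L^p$-boundedness of the principal-value operator $Jf$, which follows from Calderón's theorem on singular integrals along $C^1$ graphs, adapted to the parabolic setting via the Fabes--Rivière framework.

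For (2), once the maximal inequality is in hand, a standard density argument reduces pointwise a.e.\ convergence to the case of $f\in C^\infty_c$. For such $f$, the difference $J_\epsilon f(Q,t) - J_{\epsilon'}f(Q,t)$ is controlled by writing $f(y,s) = f(Q,t) + [f(y,s)-f(Q,t)]$, bounding the second term directly by the Lipschitz regularity of $f$ against the Gaussian factor, and handling the first term via the cancellation $\int_{M\cap\{|y-Q|<r\}} K(Q,y,t-s)\,d\sigma = O(1)$ coming from the $C^1$ estimate on $\langle y-Q,\eta_Q\rangle$. Pointwise convergence together with the maximal bound then upgrades to convergence in $L^p$ via dominated convergence in the standard way.

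For (3), I would exploit that on a $C^1$ (not merely Lipschitz) boundary the factor $\omega(|y-Q|)\to 0$ makes $J$ a compact operator on $L^p(M\times(0,T))$. By the Fredholm alternative, $c_n I + J$ is invertible iff it is injective, and injectivity follows from uniqueness for the parabolic Neumann problem: if $(c_nI+J)f = 0$, then the associated single-layer heat potential $\mathcal{S}f$ satisfies the homogeneous initial-Neumann problem in $\Omega$ (by the jump relation for the parabolic double layer), hence vanishes in $\Omega$; applying the same reasoning to the exterior problem forces $f\equiv 0$. The main obstacle is step (1): proving the parabolic Calderón theorem on a $C^1$ surface genuinely requires Calderón-commutator-type estimates in the anisotropic parabolic metric, and in practice one invokes the Fabes--Rivière results rather than reproving them from scratch.
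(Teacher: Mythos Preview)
Your sketch is sound and correctly identifies the operator as the truncated parabolic double-layer potential, with the $C^1$ hypothesis supplying the crucial estimate $|\langle y-Q,\eta_Q\rangle|\le \omega(|y-Q|)\,|y-Q|$ that drives both the singular-integral bounds and the compactness argument. However, you should be aware that the paper does not itself prove this proposition: it is stated as a quotation from Fabes and Rivi\`ere (reference~\cite{RefWorks:105} in the paper), introduced with the sentence ``The following result plays a crucial role for that construction of solution to make sense, and is proved in~\cite{RefWorks:105}.'' So there is no ``paper's own proof'' to compare against; the authors simply invoke the Fabes--Rivi\`ere result as a black box, exactly as you suggest one would do in practice at the end of your proposal.

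That said, a brief remark on your outline relative to the original Fabes--Rivi\`ere argument: your strategy for (1) via a Cotlar-type pointwise domination $\sup_\epsilon|J_\epsilon f|\lesssim M_{\mathrm{par}}(Jf)+M_{\mathrm{par}}f$ presupposes $L^p$-boundedness of the untruncated operator $J$, which in their paper is obtained first (using Calder\'on's commutator theorem transplanted to the parabolic metric) and then the maximal truncation follows. Your step~(3) is essentially their argument: the $C^1$ condition forces $\omega(r)\to 0$, which makes $J$ a limit in operator norm of operators with bounded kernels, hence compact, and injectivity of $c_nI+J$ is deduced from uniqueness for the initial-Neumann problem via the jump relations. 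The only point where one must be slightly careful is that the uniqueness input needed is for $L^p$ Neumann data (not classical), which Fabes--Rivi\`ere supply separately; your sketch implicitly assumes this.
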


We consider the case $d=1$ below. The extension to arbitrary $d>0$ is straightforward. For $Q\in M$, $(x,t)\in \Omega_T$ and $t>s$, consider\[W(t-s,x,Q)=\frac{\exp\left(\frac{-|x-Q|^2}{4(t-s)}\right)}{(t-s)^\frac{n}{2}}\  \text {and } g(Q,t)=-2 [-c_n I +J]^{-1}\gamma(Q,t)\]
where $c_n$ is given in $\cite{RefWorks:105}$.
\begin{definition}
$\varphi$ is said to be a classical solution of system $(\ref{sy3})$ with $ d=1$ and, $\gamma\in L_p(M\times(0,T))$ for $p>1$ if and only if
 \[\varphi(x,t)= \int_0^t\int_{M} W(t-s,x,Q)g(Q,s)\ d\sigma\ ds \text { for all }\ (x,t)\in\Omega_T\]
\end{definition}

\begin{rems}
When $\theta=0$ and $\varphi (x,0)=0$, the weak solution of $(\ref{m2})$  is the same as the classical solution of $(\ref{sy3})$. 
\end{rems}\\
\\
In order to prove the classical solution $\varphi$ to $(\ref{sy3})$  is H\"{o}lder continuous, let $(x,T)$, $(y,\tau)\in \Omega_T$ such that 
\[\varphi(x,T)= \int_0^T\int_{M} W(T-s,x,Q)g(Q,s)\ d\sigma\ ds\] and \[\varphi(y,\tau)= \int_0^ \tau\int_{M} W(\tau-s,y,Q)g(Q,s)\ d\sigma\ ds\] Without loss of generality we assume $0<\tau<T$. Consider the difference \begin{align*}
\varphi(x,T)-\varphi(y,\tau)&=\int_0^\tau\int_{M} (W(T-s,x,Q)-W(\tau-s,y,Q))g(Q,s)\ d\sigma \ ds\\
&\quad\quad+\int_\tau^T\int_{M} W(T-s,x,Q)g(Q,s)\ d\sigma \ ds
\end{align*}
 Lemmas $\ref{a}$, $\ref{e}$ and $\ref{c}$ provide estimates needed to prove $\varphi$ is H\"{o}lder continuous. Throughout the proofs we assume $p'=\frac{p}{p-1}$.\\

\begin{lemma}\label{a}
Let $p>n+1$. Suppose $(x,T)$, $(y,\tau)\in\Omega_{ T}$  with $0<\tau<T$ and $ \R^c= \lbrace( Q,s)\in M\times(0,\tau):|x-Q|+|T-s|^{\frac{1}{2}}<2(|x-y|+|T-\tau|^\frac{1}{2}) \rbrace$. Then for $ 0<a<1-\frac{n+1}{p}$ there exists $K_1>0$ depending on $p, n, \overline\Omega$, $T$ and independent of $g\in L_p(M\times(0,T))$  such that
\begin{align*}
\int_{\R^c}|(W(T-s,x,Q)&-W(\tau-s,y,Q))g(Q,s)|\ d\sigma\ ds\\ &\leq  K_1\left( |x-y|+|T-\tau|^\frac{1}{2}\right)^a \parallel g\parallel_{p,M\times[0,\tau]}
\end{align*}
\end{lemma}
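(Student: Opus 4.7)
Write $\delta := |x-y|+|T-\tau|^{1/2}$. The overall strategy is to apply H\"older's inequality with conjugate exponents $p$ and $p'=p/(p-1)$ to factor out $\|g\|_{p,M\times[0,\tau]}$, then bound the $L^{p'}$-norm of the kernel difference over $\R^c$. Because $\R^c$ is precisely the ``near-singularity'' region, I would not try to exploit cancellation between $W(T-s,x,Q)$ and $W(\tau-s,y,Q)$; instead I would simply use the triangle inequality $|W_1-W_2|\le W_1+W_2$ and estimate the $L^{p'}$ integral of each kernel separately on $\R^c$.

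The pointwise workhorse is the Gaussian estimate
\[
W(t-s,x,Q) \;=\; (t-s)^{-n/2}\exp\!\left(-\frac{|x-Q|^2}{4(t-s)}\right) \;\le\; C_n\bigl[\,|x-Q|+(t-s)^{1/2}\,\bigr]^{-n},
\]
obtained by splitting into the cases $|x-Q|\le (t-s)^{1/2}$ (where $e^{-r^2/(4\sigma^2)}\le 1$) and its opposite (where the exponential supplies polynomial decay to any order). Writing $d_P((z_0,t_0),(Q,s)):=|z_0-Q|+|t_0-s|^{1/2}$ and $B_P((z_0,t_0),r)$ for the corresponding parabolic ball, the definition of $\R^c$ immediately gives $\R^c \subset B_P((x,T),2\delta)$. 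A short calculation using $\tau<T$ (so $(\tau-s)^{1/2}\le (T-s)^{1/2}$) also yields $|y-Q|+(\tau-s)^{1/2}\le 3\delta$ on $\R^c$, hence $\R^c\subset B_P((y,\tau),3\delta)$, and the same pointwise bound controls $W(\tau-s,y,Q)$.

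The remaining ingredient is the geometric volume estimate
\[
\bigl|\,B_P((z_0,t_0),r)\cap (M\times(0,T))\,\bigr| \;\le\; C\, r^{n+1}\qquad\text{for }(z_0,t_0)\in\overline{\Omega}\times[0,T],
\]
which uses compactness and the $C^2$ character of $M$: for each fixed $s$ with $|t_0-s|\le r^2$, the $(n-1)$-dimensional surface measure of $\{Q\in M:|z_0-Q|\le r\}$ is at most $Cr^{n-1}$, and integration over an $s$-interval of length $2r^2$ supplies the extra $r^2$. Combining this with the Gaussian bound via a dyadic decomposition into shells $A_k=B_P((x,T),\delta 2^{1-k})\setminus B_P((x,T),\delta 2^{-k})$, on which $d_P\ge \delta 2^{-k}$, gives
\[
\int_{\R^c} W(T-s,x,Q)^{p'}\, d\sigma\, ds \;\le\; C\sum_{k=0}^{\infty} (\delta 2^{-k})^{-np'}(\delta 2^{1-k})^{n+1} \;=\; C\,\delta^{n+1-np'}\sum_{k=0}^{\infty} 2^{-k(n+1-np')}.
\]
The geometric series converges precisely when $np'<n+1$, i.e.\ when $p>n+1$, which is exactly the hypothesis. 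The identical argument centered at $(y,\tau)$ disposes of the second kernel. Taking $p'$-th roots and using $(n+1-np')/p'=1-(n+1)/p$, one arrives at the bound with exponent $1-(n+1)/p$; since $a<1-(n+1)/p$ and $\delta$ is bounded above by a constant depending only on $\mathrm{diam}(\Omega)$ and $\sqrt{T}$, we conclude $\delta^{1-(n+1)/p}\le C\,\delta^a$. The only real technical point is the manifold volume estimate and the exact dependence on $p$ that forces the hypothesis $p>n+1$; the rest is a standard dyadic ``layer-cake'' argument.
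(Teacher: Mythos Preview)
Your argument is correct and follows a genuinely different route from the paper's. Both proofs begin the same way---triangle inequality on the kernel difference, then H\"older to factor out $\|g\|_p$---but diverge in how they estimate the $L^{p'}$ norm of each heat kernel over $\R^c$. The paper does not use the parabolic-distance bound $W\le C\,d_P^{-n}$; instead it invokes the elementary inequality $w^N e^{-w}\le cN$ with $N=(n-1-\epsilon)/2$ to separate the space and time variables, obtaining a product of an integral $\int |x-Q|^{-(n-1-\epsilon)}\,d\sigma$ over the spatial slice $A=\{Q\in M:|x-Q|<2\delta\}$ and a time integral $\int_0^\tau (T-s)^{(n-1-\epsilon-np')/2}\,ds$, each computed explicitly. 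Your dyadic layer-cake over parabolic shells packages the same information more cleanly and yields the sharp endpoint exponent $1-(n+1)/p$ directly, after which you downgrade to $a$ using the boundedness of $\delta$; the paper instead builds the slack into the parameter $\epsilon$ and arrives at the exponent $\epsilon(p-1)/p<1-(n+1)/p$. Your approach is more conceptual and transplants readily to other kernels with the same scaling; the paper's is more hands-on and avoids any appeal to the parabolic-ball volume estimate on $M\times(0,T)$, relying only on the $(n-1)$-dimensional surface bound $\sigma(M\cap B(x,r))\le Cr^{n-1}$.
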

\begin{proof}
\begin{align*}
\int_{\R^c}|(W(T-s,x,Q)&-W(\tau-s,y,Q))g(Q,s)|\ d\sigma\ ds\\
&=\int_{\R^c}\left| \frac{\exp \left(\frac{-|x-Q|^2}{4(T-s)}\right)}{(T-s)^{\frac{n}{2}}}-\frac{\exp \left(\frac{-|y-Q|^2}{4(\tau-s)}\right)}{(\tau-s)^{\frac{n}{2}}}\right||g(Q,s)|\ d\sigma\ ds\\
&\leq \left[\left(\int_{\R^c}\left( \frac{\exp \left(\frac{-|x-Q|^2}{4(T-s)}\right)}{(T-s)^{\frac{n}{2}}}\right)^{p'}\right)^{\frac{1}{p'}}+\left(\int_{\R^c}\left( \frac{\exp \left(\frac{-|y-Q|^2}{4(\tau-s)}\right)}{(\tau-s)^{\frac{n}{2}}}\right)^{p'}\right)^{\frac{1}{p'}}\right]\Vert  g\Vert_{p,\R^c}
\end{align*}
By hypothesis $p>n+1$. Pick $0<\epsilon<\frac{p-(n+1)}{p-1}$, set $N=\frac{n-1-\epsilon}{2}$. Then there exists $c>0$ such that $w^N\cdot\exp(-w)\leq c\cdot N$ for all $w\geq 0$. Consequently,
\begin{align*}
& \left[\left(\int_{\R^c}\frac{\exp \left(\frac{-p'|x-Q|^2}{4(T-s)}\right)}{(T-s)^{\frac{np'}{2}}}\right)^{\frac{1}{p'}}+\left(\int_{\R^c}\frac{\exp \left(\frac{-p'|y-Q|^2}{4(\tau-s)}\right)}{(\tau-s)^{\frac{np'}{2}}}\right)^{\frac{1}{p'}}\right] \Vert  g\Vert_{p,\R^c}\\
&\leq \left[\left(\int_{\R^c} \frac{c\cdot N}{(T-s)^{\frac{np'}{2}}\left(\frac{p'|x-Q|^2}{4(T-s)}\right)^N}\right)^{\frac{1}{p'}}+\left(\int_{\R^c} \frac{c\cdot N}{(\tau-s)^{\frac{np'}{2}}\left(\frac{p'|y-Q|^2}{4(\tau-s)}\right)^N}\right)^{\frac{1}{p'}}\right] \Vert g\Vert_{p,\R^c}\\
&\leq \left[C_1 \left(\int_0^\tau{(T-s)^{\frac{n-1-\epsilon-np'}{2}}}ds\int_A\frac{1}{|x-Q|^{n-1-\epsilon}}\ d\sigma\right)^\frac{1}{p'} \right. \\
&\left. \quad + C_2 \left(\int_0^\tau{(\tau-s)^{\frac{n-1-\epsilon -np'}{2}}}ds\int_A\frac{1}{|y-Q|^{n-1-\epsilon}}\ d\sigma\right)^\frac{1}{p'}\right] \parallel g\parallel_{p,\R^c}
\end{align*}
where $A= \lbrace Q\in M:|x-Q|<2|x-y|+|T-\tau|^{\frac{1}{2}} \rbrace$. Since $|T-\tau|<|T-s|, \R^c \subset  A\times(0,\tau)$. Let $\rho_y=|y-Q|$, $\rho_x=|x-Q|$. Notice that in $A,$ $0<\rho_x<2|x-y|+|T-\tau|^\frac{1}{2}$ and $0<\rho_y<|x-y|+\rho_x <3|x-y|+|T-\tau|^\frac{1}{2}$. Therefore,
\begin{align*}
& \left[ C_1 \left(\int_0^\tau{(\tau-s)^{\frac{n-1-\epsilon -np'}{2}}}ds\int_A\frac{1}{|y-Q|^{n-1-\epsilon}}d\sigma\right)^\frac{1}{p'}\right. \\
&\left. +C_2 \left(\int_0^\tau{(T-s)^{\frac{n-1-\epsilon-np'}{2}}}ds\int_A\frac{1}{|x-Q|^{n-1-\epsilon}}d\sigma\right)^\frac{1}{p'}\right]\parallel g\parallel_{p,\R^c}\\
&\leq \left[\tilde C_1 \left(\int_0^\tau{(\tau-s)^{\frac{n-1-\epsilon -np'}{2}}}ds\int_0^{3|x-y|+|T-\tau|^\frac{1}{2}}r^{\epsilon-1}dr\right)^\frac{1}{p'}\right.\\
&\left. +\tilde C_2 \left(\int_0^\tau{(T-s)^{\frac{n-1-\epsilon-np'}{2}}}ds\int_0^{2|x-y|+|T-\tau|^\frac{1}{2}}r^{\epsilon-1}dr\right)^\frac{1}{p'}\right]\Vert g\Vert_{p,\R^c}
\end{align*}
\begin{align*}
&\leq \left[\frac{\tilde C_1}{\epsilon^\frac{1}{p'}} {(\tau)^{\frac{n+1-\epsilon -np'}{2p'}}}\left(3|x-y|+|T-\tau|^\frac{1}{2}\right)^\frac{\epsilon}{p'}\right. \\
&\left.  +\frac{\tilde C_2}{\epsilon^\frac{1}{p'}} \left({T^{\frac{n+1-\epsilon -np'}{2}}-(T-\tau)^{\frac{n+1-\epsilon -np'}{2}}}\right)^\frac{1}{p'}\left(2|x-y|+|T-\tau|^\frac{1}{2}\right)^\frac{\epsilon}{p'}\right]\Vert g\Vert_{p,\R^c}
\end{align*}
By hypothesis, $p'<\frac{n+1-\epsilon}{n}$. Therefore, there exists $K_1>0$ depends on $p, n$ and $T$ such that
\begin{align*}
\int_{\R^c}|(W(T-s,x,Q)&-W(\tau-s,y,Q))g(Q,s)|\ d\sigma\ ds\\ &\leq  K_1\left( |x-y|+|T-\tau|^\frac{1}{2}\right)^\frac{\epsilon(p-1)}{p}\parallel g\parallel_{p,M\times[0,\tau].}
\end{align*}  The result follows since $0<\epsilon<\frac{ap}{p-1}$ is arbitrary.\end{proof}\\
\\
The proof of the following Lemma makes use of Brown's corollary to Theorem 3.1 in \cite{RefWorks:81}. This also provides a proof for the remark made in \cite{RefWorks:81} after Lemma 3.4. 
\\
\begin{lemma}\label{e}
Let  $p>n+1$. Suppose $(x,T)$,$(y,\tau) \in \Omega_{ T}$ and $ \mathcal{R}= \lbrace( Q,s)\in M\times(0,\tau):2(|x-y|+|T-\tau|^\frac{1}{2})<|x-Q|+|T-s|^{\frac{1}{2}} \rbrace$. Then for $ 0<a<1-\frac{n+1}{p}$ there exists $K_2>0$ depending on $p, n, \overline\Omega$, $T$ and independent of $g\in L_p(M\times(0, T))$ such that,
\begin{align*}
\int_{\R}|(W(T-s,x,Q)&-W(\tau-s,y,Q))g(Q,s)|\ d\sigma \ ds\\
& \leq  K_2\left( |x-y|+|T-\tau|^\frac{1}{2}\right)^a\parallel g\parallel_{p,M\times[0,\tau]}.
\end{align*}
\end{lemma}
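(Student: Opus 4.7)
The plan is to exploit the fact that on $\mathcal{R}$ the parabolic distance $\rho_0 := |x-Q| + |T-s|^{1/2}$ dominates $\delta := 2(|x-y|+|T-\tau|^{1/2})$, so the difference $W(T-s,x,Q)-W(\tau-s,y,Q)$ admits a smooth mean-value estimate there. Parametrize the space-time segment from $(\tau,y)$ to $(T,x)$ by $(t_\theta,x_\theta) = (\tau+\theta(T-\tau),\, y+\theta(x-y))$, $\theta\in[0,1]$, and write the difference as $\int_0^1\!\bigl[(T-\tau)\partial_t W + (x-y)\cdot\nabla_x W\bigr]_{(t_\theta-s,\,x_\theta,\,Q)}\,d\theta$. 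The triangle inequalities $|x_\theta-Q|\ge|x-Q|-|x-y|$ and the subadditivity $(a+b)^{1/2}\le a^{1/2}+b^{1/2}$ applied to $T-s = (t_\theta-s) + (1-\theta)(T-\tau)$ give $|t_\theta-s|^{1/2}\ge|T-s|^{1/2}-|T-\tau|^{1/2}$. Summing and using the condition defining $\mathcal{R}$ yields $|x_\theta-Q|+|t_\theta-s|^{1/2}\ge \rho_0-\delta/2\ge \rho_0/2$ for every $\theta\in[0,1]$.

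Next I would invoke the pointwise Gaussian kernel bounds
\begin{align*}
|\nabla_x W(t-s,x,Q)| \le \frac{C}{\rho^{n+1}}, \qquad |\partial_t W(t-s,x,Q)| \le \frac{C}{\rho^{n+2}}, \qquad \rho = |x-Q|+|t-s|^{1/2},
\end{align*}
which follow by differentiating the heat kernel explicitly and then applying the elementary bound $w^N e^{-w}\le c_N$ already used in the proof of Lemma~\ref{a}; these are precisely the content of Brown's corollary to Theorem 3.1 in \cite{RefWorks:81}. Combining them with the interpolation identity and using $|T-\tau|^{1/2}\le\rho_0/2$ on $\mathcal{R}$ to absorb the extra $\rho_0^{-1}$ coming from $\partial_t W$, one obtains the pointwise estimate
\begin{align*}
|W(T-s,x,Q)-W(\tau-s,y,Q)| \le \frac{C(|x-y|+|T-\tau|^{1/2})}{\rho_0^{n+1}} \qquad \text{on } \mathcal{R}.
\end{align*}

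The final step is H\"older's inequality with exponents $p$ and $p'$, followed by evaluation of $\int_\mathcal{R}\rho_0^{-(n+1)p'}\,d\sigma\,ds$. Writing $r=|x-Q|$ and $u=|T-s|^{1/2}$, using $ds = 2u\,du$, and invoking the local surface-measure bound $d\sigma\lesssim r^{n-2}\,dr$ arising from the $C^{2+\mu}$ regularity of the $(n-1)$-dimensional manifold $M$ (the same device used implicitly in Lemma~\ref{a}), a polar change of variables in the $(r,u)$-plane reduces the integral to $\int_\delta^{C_0}\rho^{n-(n+1)p'}\,d\rho\lesssim \delta^{(n+1)(1-p')}$, which converges because $p'>1$. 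Taking the $1/p'$-th root, multiplying by the prefactor $|x-y|+|T-\tau|^{1/2}\asymp\delta$, and using the identity $(1-p')/p' = -1/p$, yields the bound $\delta^{1-(n+1)/p}$, strictly positive by the hypothesis $p>n+1$; any $0<a<1-(n+1)/p$ is therefore admissible, at the cost of replacing $\epsilon = a p/(p-1)$ as in Lemma~\ref{a}. I expect the main technical care to lie not in the calculus but in ensuring the uniform path-comparability $\rho_\theta \asymp \rho_0$, which is exactly where the factor $2$ built into the definition of $\mathcal{R}$ is indispensable.
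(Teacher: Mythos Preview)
Your argument is correct and shares the paper's overall strategy: obtain a pointwise bound on the kernel difference over $\mathcal{R}$ that is proportional to $|x-y|+|T-\tau|^{1/2}$, then apply H\"older and evaluate the remaining integral. The execution differs in one respect worth noting. The paper invokes Brown's Theorem~3.1 as a black box to get the difference estimate and retains the Gaussian factor $\exp\bigl(-|x-Q|^2/(4(T-s))\bigr)$ through the H\"older step, so that after H\"older the domain can be enlarged to all of $M\times(0,\tau)$ and the $w^N e^{-w}\le cN$ device handles convergence. You instead derive the difference estimate by an explicit mean-value argument along the segment, pass immediately to the pure power bound $C\rho_0^{-(n+1)}$, and then must use the lower cutoff $\rho_0\ge\delta$ built into $\mathcal{R}$ to make the resulting integral $\int_{\mathcal{R}}\rho_0^{-(n+1)p'}\,d\sigma\,ds$ finite. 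Your route is more self-contained (it does not require Brown's estimate off the shelf) and in fact yields the sharp exponent $1-(n+1)/p$ directly; the paper's route is a touch shorter once Brown is granted. The factor $2$ in the definition of $\mathcal{R}$ is used in both proofs, exactly where you identify it: to guarantee $\rho_\theta\ge\rho_0/2$ along the interpolating path (your version) or, equivalently, to bound the ratio in Brown's inequality by $1/2$ (the paper's version).
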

\begin{proof}
Using the Theorem 3.1 in \cite{RefWorks:81}, we have
\begin{align*}
&\int_{\R}|(W(T-s,x,Q)-W(\tau-s,y,Q))g(Q,s)|\ d\sigma \ ds\\
&\leq\int_{\R}C\left(\frac{|T-\tau|^\frac{1}{2}+|x-y|}{|T-s|^\frac{1}{2}+|x-Q|}\right) (1+(T-s)^{\frac{-n}{2}})\exp \left(\frac{-|x-Q|^2}{4(T-s)}\right) |g(Q,s)|\ d\sigma \ ds\\
&\leq D_1\left(\frac{1}{2}\right)^{1-a}\int_{\R}{\left(\frac{|T-\tau|^\frac{1}{2}+|x-y|}{|T-s|^\frac{1}{2}+|x-Q|}\right)}^{a} \frac{\exp \left(\frac{-|x-Q|^2}{4(T-s)}\right)}{(T-s)^{\frac{n}{2}}} |g(Q,s)|\ d\sigma\  ds\\
&\leq \tilde D_1\int_{\R}\frac{1}{|x-Q|^a }\frac{\exp \left(\frac{-|x-Q|^2}{4(T-s)}\right)}{(T-s)^{\frac{n}{2}}} |g(Q,s)|\ d\sigma\ ds\\
\end{align*}
where $D_1= C(T^{\frac{n}{2}}+1)\ \text{and}\  \tilde D_1=D_1\left(\frac{1}{2}\right)^{1-a}{\left(|T-\tau|^\frac{1}{2}+|x-y|\right)}^a$.
By hypothesis, $n+1-(n+a)p'>0.$ Pick $0<\epsilon<(n+1)-(n+a)p'$ and set $N=\frac{n-1-\epsilon-ap'}{2}$. Then there exists $c>0$ such that $w^N\cdot\exp(-w)\leq c\cdot N$ for all $w\geq 0$. Consequently,
\begin{align*}
&\tilde D_1\left(\int_{\R}\frac{1}{|x-Q|^{ap'} }\frac{\exp \left(\frac{-p'|x-Q|^2}{4(T-s)}\right)}{(T-s)^{\frac{np'}{2}}}\ d\sigma \ ds\right)^ \frac{1}{p'}\parallel g\parallel_{p,\R}\\
&\leq \tilde D_1 \left(\int_{\R} \frac{1}{|x-Q|^{ap'} }\frac{c\cdot N}{(T-s)^{\frac{np'}{2}}\left(\frac{p'|x-Q|^2}{4(T-s)}\right)^N}\right)^{\frac{1}{p'}} \parallel g\parallel_{p,\R}\\
&\leq\tilde c \tilde D_1 \left(\int_0^\tau\int_{M}{\frac{(T-s)^\frac{n-1-\epsilon-ap'}{2}}{(T-s)^{\frac{np'}{2}}}}{\frac{1}{|x-Q|^{n-1-\epsilon}}}\ d\sigma \ ds\right)^\frac{1}{p'}\parallel g(s,Q)\parallel_{p,M\times[0,\tau]} \\
&\leq\tilde c \tilde D_1 \left(\int_0^\tau{(T-s)^\frac{n-1-\epsilon-ap'-np'}{2}}\ ds\cdot\int_{M}{\frac{1}{|x-Q|^{n-1-\epsilon}}} \ \ d\sigma\right)^{\frac{1}{p'}} \parallel g\parallel_{p,M\times[0,\tau]} 
\end{align*}
Then by change of variable, there exists $C, \alpha>0$ such that
\begin{align*}
&\tilde D_1 \left(\int_0^\tau{(T-s)^\frac{n-1-\epsilon-ap'-np'}{2}}\ ds\cdot\int_{M}{\frac{1}{|x-Q|^{n-1-\epsilon}}} \ \ d\sigma\right)^{\frac{1}{p'}} \parallel g\parallel_{p,M\times[0,\tau]}\\
&\leq C\tilde D_1 \left({(T)^{\frac{n-1-\epsilon-ap'-np'}{2}+1}}\cdot\int_0^{\alpha}{\frac{1}{r^{1-\epsilon}}} dr\right)^{\frac{1}{p'}} \parallel g\parallel_{p,M\times[0,\tau]} 
\end{align*} 
The result follows.
\end{proof}\\

\begin{lemma}\label{c}
Let $p>n+1$, and suppose $(x,T)$,$(y,\tau)\in \Omega_{T}$. Then for $0<a<\frac{1}{2}-\frac{n+1}{2p}$ there exists $K_3>0$, depending on $p, n, \overline\Omega$ and $T$, and independent of  $g\in L_p(M\times(0, T))$ such that,
\begin{align*}
&\int_\tau^T\int_{M}|W(T-s,x,Q) g(Q,s)|\ d\sigma\  ds\leq K_3 (T-\tau)^a \parallel g\parallel_{p,M\times[\tau,T]}
\end{align*}
\end{lemma}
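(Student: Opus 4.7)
The plan is to estimate the integral by Hölder's inequality and then bound the $p'$-norm of the kernel $W$ using the same Gaussian-decay trick employed in Lemmas \ref{a} and \ref{e}. Since only one kernel appears (no difference of two kernels as in Lemmas \ref{a} and \ref{e}), the argument is more direct.

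First, by Hölder's inequality with exponents $p'=p/(p-1)$ and $p$,
\begin{align*}
\int_\tau^T\!\!\int_M |W(T-s,x,Q)g(Q,s)|\,d\sigma\,ds
\leq \left(\int_\tau^T\!\!\int_M W(T-s,x,Q)^{p'}\,d\sigma\,ds\right)^{1/p'}\!\!\|g\|_{p,M\times[\tau,T]}.
\end{align*}
Next, since $p>n+1$, I would pick $0<\epsilon<\frac{p-n-1}{p-1}$ and set $N=\frac{n-1-\epsilon}{2}$, so that there exists $c>0$ with $w^N e^{-w}\le c\cdot N$ for all $w\ge 0$. Applying this with $w=\frac{p'|x-Q|^2}{4(T-s)}$ gives
\begin{align*}
W(T-s,x,Q)^{p'}=\frac{\exp\!\left(-\frac{p'|x-Q|^2}{4(T-s)}\right)}{(T-s)^{np'/2}}
\le \frac{C\,(T-s)^{(n-1-\epsilon)/2 - np'/2}}{|x-Q|^{n-1-\epsilon}}.
\end{align*}

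The integration over $M$ is handled as in the proof of Lemma \ref{e}: parametrizing $M$ locally around the nearest point on $M$ to $x$ and passing to polar coordinates yields $\int_M |x-Q|^{-(n-1-\epsilon)}\,d\sigma \le C\int_0^\alpha r^{\epsilon-1}\,dr < \infty$, with a bound uniform in $x\in\overline\Omega$ since $\epsilon>0$ and $M$ is compact. For the time integration, the choice of $\epsilon$ ensures
\begin{align*}
\frac{n-1-\epsilon-np'}{2}+1 = \frac{n+1-\epsilon-np'}{2} = \frac{1}{2}\!\left(\frac{p-n-1}{p-1}-\epsilon\right) > 0,
\end{align*}
so $\int_\tau^T (T-s)^{(n-1-\epsilon-np')/2}\,ds = C_1(T-\tau)^{(n+1-\epsilon-np')/2}$. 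Combining and taking the $1/p'$ power produces $(T-\tau)^{(n+1-\epsilon-np')/(2p')}$, whose exponent simplifies using $1/p'=(p-1)/p$ to
\begin{align*}
\frac{1}{2}-\frac{n+1}{2p}-\frac{(p-1)\epsilon}{2p}.
\end{align*}
Since $\epsilon$ may be chosen arbitrarily small in $\bigl(0,\frac{p-n-1}{p-1}\bigr)$, any $0<a<\frac{1}{2}-\frac{n+1}{2p}$ is attainable, which yields the desired inequality with constant $K_3$ depending only on $p$, $n$, $\overline\Omega$ and $T$.

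There is no serious obstacle here; the only subtlety is the uniformity over $x\in\overline\Omega$ of the bound on $\int_M|x-Q|^{-(n-1-\epsilon)}\,d\sigma$, which is addressed exactly as in Lemma \ref{e} by a local change of variables near the nearest point of $M$ to $x$ and using compactness of $M$.
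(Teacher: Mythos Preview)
Your proof is correct and follows essentially the same approach as the paper: apply H\"older's inequality, use the elementary bound $w^N e^{-w}\le cN$ with $N=\frac{n-1-\epsilon}{2}$ to control the kernel, integrate the resulting power of $(T-s)$ over $[\tau,T]$ and the power of $|x-Q|$ over $M$, and let $\epsilon\to 0^+$ to reach every exponent $a<\frac{1}{2}-\frac{n+1}{2p}$. The only cosmetic difference is that you apply H\"older before bounding the kernel (as in the paper's Lemma~\ref{a}), whereas the paper's write-up of Lemma~\ref{c} bounds the kernel pointwise first and then invokes H\"older; the resulting integrals and final exponent are identical.
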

\begin{proof}By hypothesis $p>n+1$. Pick $0<\epsilon<n+1-np'$ and set $N=\frac{n-1-\epsilon}{2}$. Then there exists $c>0$ such that $w^N\cdot\exp(-w)\leq c\cdot N$ for all $w\geq 0$. Consequently,
\begin{align*}
&\int_\tau^T\int_{M}|W(T-s,x,Q) g(Q,s)|\ d\sigma\  ds\\
 & \leq \int_\tau^T\int_{M}\frac{\exp \left(\frac{-|x-Q|^2}{4(T-s)}\right)}{(T-s)^{\frac{n}{2}}} |g(Q,s)|\ d\sigma \ ds\\
&\leq C_3\int_\tau^T\int_{M}{\frac{\tilde C(T-s)^\frac{n-1-\epsilon}{2}}{(T-s)^{\frac{n}{2}}}}\cdot{\frac{1}{|x-Q|^{n-1-\epsilon}}} |g(Q,s)|\ d\sigma \ ds\\
&\leq C_3  \left(\int_\tau^T{(T-s)^\frac{n-1-\epsilon-np'}{2}}ds\cdot\int_{M}{\frac{1}{|x-Q|^{n-1-\epsilon}}} d\sigma\right)^{\frac{1}{p'}} \parallel g\parallel_{p,M\times[\tau,T]}
\end{align*}
Similarly, by change of variable there exist $\tilde C_3, \alpha>0$ such that
\begin{align*}
&C_3  \left(\int_\tau^T{(T-s)^\frac{n-1-\epsilon-np'}{2}}ds\cdot\int_{M}{\frac{1}{|x-Q|^{n-1-\epsilon}}} d\sigma\right)^{\frac{1}{p'}} \parallel g\parallel_{p,M\times[\tau,T]}\\
&\leq\tilde C_3  \left(|{(T-\tau)^{\frac{n-1-\epsilon-np'}{2}+1}}|\cdot\int_0^{\alpha}{\frac{1}{r^{1-\epsilon}}} dr\right)^{\frac{1}{p'}} \parallel g\parallel_{p,M\times[\tau,T]}\\
&\leq K_3 (T-\tau)^{\frac{n+1-\epsilon-np'}{2p'}} \parallel g\parallel_{p,M\times[\tau,T]}
\end{align*}
where $K_3>0$, depends on $p, n, \overline\Omega$ and $T$, and independent of  $g\in L_p(M\times(0, T))$.  The result follows since $0<\epsilon<n+1-np'$ is arbitrary, and $\frac{n+1-np'}{2p'}=\frac{1}{2}-\frac{n+1}{2p}$.
\end{proof}\\

\begin{proposition}\label{prop1}
Suppose $\gamma\in L_p(M\times (0, T))$ for $p>n+1$. Then the classical solution of  $(\ref{sy3})$ is H\"{o}lder continuous on $\overline{\Omega}\times(0, \hat  T)$ with H\"{o}lder exponent $0<a<1-\frac{n+1}{p}$, and there exists $\tilde K_p>0$, depending on $p, n, \overline\Omega$ and $T$, and independent of $\gamma$ such that \[ |\varphi(x,T)-\varphi(y,\tau)|\leq \tilde K_p{\left(|T-\tau|^\frac{1}{2}+|x-y|\right)}^a \parallel \gamma \parallel_{p,M\times(0, T)} \] \text{ for all $(x,T), (y,\tau)\in \Omega_{ T}$}.
\end{proposition}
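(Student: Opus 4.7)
The plan is to combine the decomposition
\begin{align*}
\varphi(x,T)-\varphi(y,\tau) &= \int_0^\tau\int_{M} \bigl(W(T-s,x,Q)-W(\tau-s,y,Q)\bigr) g(Q,s)\, d\sigma\, ds \\
&\quad + \int_\tau^T \int_{M} W(T-s,x,Q)\, g(Q,s)\, d\sigma\, ds
\end{align*}
introduced just before Lemma \ref{a} with the three estimates already proved, and then pass from $g$ to $\gamma$ via the invertibility statement in the preceding Proposition. Fix any $0<a<1-\tfrac{n+1}{p}$ and assume without loss of generality $0<\tau<T$. For the first double integral, I split the region of integration into the ``near'' set $\mathcal{R}^c$ and the ``far'' set $\mathcal{R}$ (both defined relative to $(x,T)$) and apply Lemma \ref{a} on $\mathcal{R}^c$ and Lemma \ref{e} on $\mathcal{R}$. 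Each delivers a bound of the form $K_i\bigl(|x-y|+|T-\tau|^{1/2}\bigr)^a \|g\|_{p, M\times[0,\tau]}$ for $i=1,2$.

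For the tail piece $\int_\tau^T\int_{M} W(T-s,x,Q) g(Q,s)\, d\sigma\, ds$, I apply Lemma \ref{c} with exponent $a/2$; since $\tfrac{a}{2}<\tfrac{1}{2}-\tfrac{n+1}{2p}$, the constraint of Lemma \ref{c} is met and it yields $K_3(T-\tau)^{a/2}\|g\|_{p,M\times[\tau,T]}$. The elementary inequality $|T-\tau|^{a/2}=(|T-\tau|^{1/2})^a\leq(|x-y|+|T-\tau|^{1/2})^a$ converts this into the mixed parabolic metric, and the monotonicity of the $L_p$ norm in its domain lets me replace $\|g\|_{p,M\times[0,\tau]}$ and $\|g\|_{p,M\times[\tau,T]}$ by the common quantity $\|g\|_{p,M_T}$. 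Summing the three pieces yields
\[
|\varphi(x,T)-\varphi(y,\tau)| \leq (K_1+K_2+K_3)\bigl(|x-y|+|T-\tau|^{1/2}\bigr)^a\, \|g\|_{p,M_T}.
\]

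The last step is to translate the bound on $\|g\|_{p,M_T}$ into the assertion about $\|\gamma\|_{p,M_T}$. By definition $g=-2[-c_nI+J]^{-1}\gamma$, and part~(3) of the Proposition preceding the three lemmas asserts that $-c_nI+J$ is invertible on $L_p(M\times(0,T))$ for every $1<p<\infty$. Hence $\|g\|_{p,M_T}\leq 2\,\bigl\|[-c_nI+J]^{-1}\bigr\|_{L_p\to L_p}\|\gamma\|_{p,M_T}$, so setting $\tilde K_p := 2(K_1+K_2+K_3)\bigl\|[-c_nI+J]^{-1}\bigr\|_{L_p\to L_p}$ gives the claimed Hölder estimate.

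The real work has already been done in Lemmas \ref{a}, \ref{e}, and \ref{c}; the assembly is purely bookkeeping. The only subtlety to watch is matching Hölder exponents — the ``pure time'' exponent $\tfrac{1}{2}-\tfrac{n+1}{2p}$ available from Lemma \ref{c} doubles, when measured in the parabolic metric $|x-y|+|T-\tau|^{1/2}$, to exactly the threshold $1-\tfrac{n+1}{p}$ permitted by Lemmas \ref{a} and \ref{e}, so the three exponents align without loss. Thus I do not anticipate any substantive obstacle beyond what is already present in the supporting lemmas and the $L_p$-invertibility of $-c_nI+J$.
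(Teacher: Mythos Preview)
Your argument is correct and follows essentially the same route as the paper: decompose $\varphi(x,T)-\varphi(y,\tau)$ into the near piece on $\mathcal{R}^c$, the far piece on $\mathcal{R}$, and the time-tail on $[\tau,T]$, then apply Lemmas~\ref{a}, \ref{e}, \ref{c} respectively. You are in fact more explicit than the paper on two points it leaves implicit---the exponent matching for Lemma~\ref{c} via $(T-\tau)^{a/2}=\bigl(|T-\tau|^{1/2}\bigr)^a$, and the passage from $\|g\|_{p,M_T}$ to $\|\gamma\|_{p,M_T}$ via the $L_p$-boundedness of $[-c_nI+J]^{-1}$---while the paper additionally opens with an interior $W^{2,1}_p$ argument on a compactly contained $\tilde\Omega$ that is not strictly needed, since the layer-potential estimates already cover all of $\Omega_T$.
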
\\

\begin{proof}We prove this proposition for $ d=1$. The extension to arbitrary $d>0$ follows from a simple change of variables. Let $\tilde\Omega$ be an open subset of $\Omega$ with smooth boundary such that the closure of $\tilde\Omega$ is contained in $\Omega$. It is straightforward matter to apply cut-off functions and Theorem 9.1 in \cite{RefWorks:65} to obtain an estimate for $\varphi$ in $W^{2,1}_p(\tilde\Omega\times(0,T))$. Moreover, there exists $L_{p,\tilde\Omega,T}$ independent of $\gamma$ such that \[ \Vert\varphi\Vert^{(2)}_{p,\tilde\Omega_T}\leq L_{p,\tilde\Omega,T}\Vert\gamma\Vert_{p,M_T}\] Since $p>n+1$, $W^{2,1}_p(\tilde \Omega\times (0,T))$ embeds continuously into the space of H\"{o}lder continuous functions (see \cite{RefWorks:65}). As a  result  we have H\"{o}lder continuity of the solution to ($\ref{sy3})$ away from $M_T$. We want to extend this behavior to points near $M_T$.

Pick points $(x,T)$, $(y,\tau) \in \Omega_{ T}$. We know from Fabes and Riviere \cite{RefWorks:105} that the solution of ($\ref{sy3})$ is given by \[\varphi(x,T)= \int_0^T\int_M W(T-s,x,Q)g(Q,s)\ d\sigma \ ds\] where $W(T-s,x,Q)=\frac{\exp\left(\frac{-|x-Q|^2}{4(T-s)}\right)}{(T-s)^\frac{n}{2}}$, $g(Q,t)= [I +J]^{-1}\gamma(Q,t)$ and\[J(g)(Q,t)= \lim_{\epsilon\rightarrow 0^{+}} \int_0^{t-\epsilon}\int_{M}\frac{\langle y-Q,\eta_Q\rangle}{(t-s)^{\frac{n}{2}+1}}\exp \left(-\frac{\vert Q-y\vert^2}{4(t-s)}\right)g(s,y)\ d\sigma \ ds\] for almost every $ Q\in{M}$ (for smooth manifold it is true for all Q), $\eta_Q$ being the unit outward normal to $M$ at $Q.$ \begin{align*}
|\varphi(x,T)-\varphi(y,\tau)|&=|\int_0^\tau\int_{M} (W(T-s,x,Q)-W(\tau-s,y,Q))g(Q,s)\ d\sigma \ ds\\
&\quad\quad+\int_\tau^T\int_{M} W(T-s,x,Q)g(Q,s)\ d\sigma \ ds|\\
&\leq |\int_{\R^c}(W(T-s,x,Q)-W(\tau-s,y,Q))g(Q,s)\ d\sigma \ ds|\\
&\quad + |\int_{\R}(W(T-s,x,Q)-W(\tau-s,y,Q))g(Q,s)\ d\sigma \ ds|\\
&\quad+\int_\tau^T\int_M C (1+(T-s)^{\frac{-n}{2}})\exp \left(\frac{-|x-Q|^2}{4(T-s)}\right) |g(Q,s)|\ d\sigma \ ds
\end{align*}
Where $\R$ and $\R^c$ are given in Lemmas $\ref{e}$ and $\ref{c}$. Now using Lemma $\ref{a}$, Lemmas $\ref{e}$ and $\ref{c}$ for $ 0<a<1-\frac{n+1}{p}$, there exists $K_1, K_2, K_3>0$ depending on $p, n, \overline\Omega$, $T$ and independent of $g\in L_p(M\times(0, T))$, such that
\begin{align*}
|\varphi(x,T)-\varphi(y,\tau)|&\leq K_1\left( |x-y|+|T-\tau|^\frac{1}{2}\right)^a\parallel g\parallel_{p,M\times(0,\tau)}\\
 &\quad + K_2{\left(|T-\tau|^\frac{1}{2}+|x-y|\right)}^a \parallel g\parallel_{p,M\times(0,\tau)}\\
&\quad+ K_3 (T-\tau)^{\frac{n+1-\epsilon-np'}{2p'}} \parallel g\parallel_{p,M\times(\tau,T)}
\end{align*}
So, \begin{align*}
|\varphi(x,T)-\varphi(y,\tau)|\leq \tilde K_p{\left(|T-\tau|^\frac{1}{2}+|x-y|\right)}^a \parallel g\parallel_{p,M\times(0,T)}
\end{align*}
\end{proof}\\

Now we combine H\"{o}lder estimates and Theorem 9.1 in chapter 4 of \cite{RefWorks:65} to get the existence of a H\"{o}lder continuous solution to system $(\ref{m2})$ for any finite time $T>0$. \\

{\bf Proof of Theorem $\ref{n}$:} Chapter 4, Theorem 5.1 in \cite{RefWorks:65}  implies ($\ref{m2}$) has the unique weak solution. In order to get H\"{o}lder estimates, we break  $(\ref{m2})$ into two sub systems. To  this end, consider

\begin{align}
{\varphi_2}_t\nonumber&=  d\Delta \varphi_2+\theta & x\in \Omega,\quad & 0<t<T\\ 
d\frac{\partial \varphi_2}{\partial \eta}&=d\frac{\partial\varphi_0}{\partial\eta}& x\in M,\quad & 0<t< T\\ \nonumber
\varphi_2&=\varphi_0& x\in\Omega ,\quad &t=0
\end{align}
\begin{align}
{\varphi_1}_t\nonumber&=  d\Delta \varphi_1&
 x\in \Omega,\quad &0<t<T\\ 
d\frac{\partial \varphi_1}{\partial \eta}&=\gamma - d\frac{\partial\varphi_0}{\partial\eta}& x\in M,\quad &0<t< T\\ \nonumber
\varphi_1&=0& x\in\Omega ,\quad &t=0
\end{align}
 From Lemma $\ref{L1.5}$ there exists a unique solution of $(5.1)$ in $W^{2,1}_p(\Omega\times(0, T))$, and a constant $C_1( T, p)>0$ independent of $\theta$ and $\varphi_0$ such that 
 \[ \Vert \varphi_2 \Vert^{(2)}_{p,\Omega\times(0, T)} \leq C_1( T, p) (\Vert \theta\Vert_{p,\Omega\times(0,T)}+{\Vert \frac{\partial\varphi_0}{\partial\eta}\Vert}_{p,(\partial\Omega\times(0, T))}^{(1-\frac{1}{p},\frac{1}{2}-\frac{1}{2p})})+\Vert \varphi_0\Vert^{(2)}_{p,\Omega}\]
Using proposition $\ref{prop1}$, there exists $C_2(T,0)>0$ independent of $\gamma$ and $\varphi_0$ so that the unique weak solution to (5.2) satisfies,
\[\vert \varphi_1 \vert^{(\beta)}_{\Omega\times(0,T)} \leq C_2( T, p)\left[ \Vert \gamma\Vert_{p,M\times(0,T)}+ \Vert \frac{\partial\varphi_0}{\partial\eta}\Vert_{p,M\times(0, T)}\right] \]
where $0<\beta<1-\frac{n+1}{p}$. By linearity,  $\varphi=\varphi_1+\varphi_2$ solves $(\ref{m2})$. Moreover, for $p>n+1$, $W^{2,1}_p(\Omega\times(0,T))$ embeds continuously into $C^{\beta, \frac{\beta}{2}}(\overline\Omega_T)$. So, there exists $C( T, p)>0$ independent of $\theta$, $\gamma$ and $\varphi_0$ such that 
\begin{align}
\vert \varphi\vert^{(\beta)}_{\Omega\times(0, T)} &\leq C( T, p) (\Vert \theta\Vert_{p,\Omega\times(0, T)}+\Vert \gamma\Vert_{p,M\times(0,T)}+\Vert \varphi_0\Vert^{(2)}_{p,\Omega})
\end{align}\\

\begin{rems}
We will use these H\"{o}lder estimates to obtain sup norm estimates, and local existence results for $(\ref{sy5})$.
\end{rems}\\

\section{Proof of Theorems $\ref{lo}$ and $\ref{great}$}
\setcounter{equation}{0}
\subsection{Local Existence}
\quad \\
\begin{theorem}\label{glo}
Suppose $F, G$ and $H$ are Lipschitz. Then $(\ref{sy5})$ has a unique global solution.
\end{theorem}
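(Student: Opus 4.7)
The strategy is to invoke Theorem~\ref{lo} to obtain a unique maximal solution $(u,v)$ on $[0,T_{\max})$ and then rule out $T_{\max}<\infty$ by producing a sup-norm bound on every compact subinterval $[0,T]$. The blow-up criterion in Theorem~\ref{lo} will then force $T_{\max}=\infty$. The linchpin is that, because $F,G,H$ are globally Lipschitz, one has linear growth $|F|+|G|\leq L(1+|\zeta|+|\nu|)$ and $|H|\leq L(1+|\zeta|)$, so that every $L_p$-norm on a time slab of length $\tau$ scales like $\tau^{1/p}$ times a sup norm of the unknowns.

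Fix $p>n+1$ and, for $0<\tau<T_{\max}$, define $\Xi(\tau):=\Vert u\Vert_{\infty,\overline\Omega\times[0,\tau]}+\Vert v\Vert_{\infty,M\times[0,\tau]}$. Theorem~\ref{3} applied to the $v$-equation on $M_\tau$ with source $F(u|_M,v)$, composed with the Sobolev embedding of $W^{2,1}_p(M_\tau)$ into continuous functions (the corollary to Lemma~\ref{Hol}, which applies since $p>(n+1)/2$), yields
\[ \Vert v\Vert_{\infty,M\times[0,\tau]} \leq C\bigl(\Vert F(u|_M,v)\Vert_{p,M_\tau}+\Vert v_0\Vert^{(2-\frac{2}{p})}_{p,M}\bigr). \]
Theorem~\ref{n} applied to the $u$-equation with source $H(u)$ and Neumann data $G(u|_M,v)$ gives, for some $0<\beta<1-\tfrac{n+1}{p}$,
\[ \Vert u\Vert_{\infty,\overline\Omega\times[0,\tau]} \leq |u|^{(\beta)}_{\Omega_\tau} \leq C\bigl(\Vert H(u)\Vert_{p,\Omega_\tau}+\Vert G(u|_M,v)\Vert_{p,M_\tau}+\Vert u_0\Vert^{(2)}_{p,\Omega}\bigr). \]
The Lipschitz bounds convert each source norm into $L'\tau^{1/p}(1+\Xi(\tau))$, so adding the two estimates produces the closed inequality $\Xi(\tau)\leq A+B\tau^{1/p}(1+\Xi(\tau))$, with $A$ depending only on the initial-data norms and $B$ only on the Lipschitz constant, $p$, $\Omega$, $M$, and a fixed time horizon $T$.

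Choosing $\tau_0>0$ so that $B\tau_0^{1/p}\leq\tfrac{1}{2}$ absorbs the right-hand side into the left and gives $\Xi(\tau_0)\leq 2(A+B\tau_0^{1/p})$, with $\tau_0$ depending only on the Lipschitz data and not on the initial conditions. Iterating the same argument on $[k\tau_0,(k+1)\tau_0]$, using $(u(\cdot,k\tau_0),v(\cdot,k\tau_0))$ as new initial data, extends the bound to any finite $T$ in finitely many steps and contradicts the blow-up alternative. The main obstacle is the regularity of the restart data: the parabolic theory only guarantees $u(\cdot,k\tau_0)\in W^{2-2/p}_p(\Omega)$ and $v(\cdot,k\tau_0)\in W^{2-2/p}_p(M)$, whereas Theorem~\ref{lo} nominally asks for $W^2_p$ initial data, so a short smoothing argument exploiting the interior classical regularity of the solution for $t>0$ (with compatibility now holding automatically since the equations are satisfied pointwise) is needed to justify each restart.
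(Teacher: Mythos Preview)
Your argument is circular in the paper's logical structure. You invoke Theorem~\ref{lo} to obtain a maximal solution, but in this paper Theorem~\ref{lo} is proved \emph{after} Theorem~\ref{glo} and explicitly relies on it: the proof of Theorem~\ref{lo} truncates the locally Lipschitz nonlinearities to globally Lipschitz ones and then applies Theorem~\ref{glo} to the truncated system. So at the point where Theorem~\ref{glo} is being established, no local well-posedness result is available to cite. The paper instead proves Theorem~\ref{glo} directly for an arbitrary fixed $T>0$ by a Schaefer fixed-point argument: one freezes $(u,v)$ in the nonlinearities, solves the resulting linear problems via Theorems~\ref{3} and~\ref{n}, checks that the resulting solution map is continuous and compact on $C(\overline\Omega\times[0,T])\times C(M\times[0,T])$, and shows the Schaefer set $\{(u,v)=\lambda S(u,v)\}$ is bounded using the global Lipschitz property. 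Uniqueness is handled separately by an $L_2$ energy estimate and Gronwall.

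Even setting the circularity aside, your restart step has a regularity gap you underestimate. Theorem~\ref{n} does not place $u$ in $W^{2,1}_p(\Omega_\tau)$; it only asserts $u\in V_2^{1,\frac{1}{2}}(\Omega_\tau)$ together with a H\"older bound. Hence you cannot conclude $u(\cdot,k\tau_0)\in W^{2-2/p}_p(\Omega)$ from a trace of $W^{2,1}_p$, and the compatibility condition $D\partial_\eta u_0=G(u_0,v_0)$ required to relaunch Theorem~\ref{n} on the next subinterval is not automatic from mere H\"older regularity. The smoothing you allude to would need a separate argument establishing $W^{2}_p$ regularity of $u(\cdot,t)$ up to the boundary for $t>0$, which is not supplied by the tools quoted. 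The paper's Schaefer approach sidesteps this entirely by working on the whole interval $[0,T]$ at once with fixed initial data.
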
\\

\begin{proof}
 Let $T>0$, Fix $(u_0,v_0)\in W_p^{2}(\Omega)\times W_p^{2}(\Omega)$ such that they satisfy the compatibility condition
\begin{align}\label{comp}
 D{\frac{ \partial {u_0}}{\partial \eta}} =G( u_0, v_0)\quad \text{on $M$}.
\end{align}Set \begin{align*}
X=\lbrace ( u, v)\in  C(\overline\Omega\times[0,T])\times C(M\times[0,T]):\ &  u(x,0)=0  ,\forall\ x\in\overline\Omega, v(x,0)=0 ,\forall\ x\in M\rbrace
\end{align*}
Note $(X, \Vert \cdot\Vert_{\infty})$ is a Banach space. Let $(u,v) \in X$. Now consider 
\begin{align}\label{fix}
 U_t\nonumber&= D\Delta U+H( u+u_0)
& x\in \Omega,\quad & 0<t<T
\\\nonumber V_t&=\tilde D\Delta_{M} V+F( u+u_0, v+v_0)& x\in M,\quad &0<t<T\\
 D\frac{\partial U}{\partial \eta}&=G( u+u_0, v+v_0)& x\in M,\quad &0<t<T\\\nonumber
U&=u_0  &x\in\Omega ,\quad &t=0 \\\nonumber
V&=v_0& x\in M ,\quad& t=0\end{align}
 From Theorems $\ref{3}$ and $\ref{n}$, ($\ref{fix}$) possesses a unique weak  solution $(U,V)\in V_2^{1,\frac{1}{2}}(\Omega_T)\times W_p^{2,1}(M_T)$. Furthermore, from embeddings, $(U,V)\in  C(\overline\Omega\times[0,T])\times C(M\times[0,T])$. Define \[S:X\rightarrow X\ \text{via}\ S(u,v)=(U-u_0,V-v_0),\]where $(U,V)$ solves  ($\ref{fix}$). We will see that $S$ is continuous and compact. Let $(u, v)$, $(\tilde u, \tilde v) \in X$. Then \[S(u,v)-S(\tilde u,\tilde v)=(U-\tilde U, V-\tilde V)\] Using  linearity, $(U-\tilde  U, V-\tilde V)$ solves
\begin{align} {U}_t-{\tilde U}_t&= \nonumber D\Delta (U-\tilde U)+H( u+u_0)-H( \tilde u+u_0)
 &x\in \Omega,\quad &0<t<T
\\ \nonumber {V}_t-{\tilde V}_t&=\tilde D\Delta_{M}(V-\tilde V)+F(u+u_0, v+v_0)-F( \tilde u+u_0, \tilde v +v_0)& x\in M,\quad& 0<t<T\\  \nonumber
 D\frac{\partial (U-\tilde U)}{\partial \eta}&=G( u+u_0, v+v_0)-G(\tilde u+u_0, \tilde v+v_0)&x\in M,\quad &0<t<T\\ \nonumber
U-\tilde U&=0 & x\in\Omega ,\quad& t=0\\ \nonumber V-\tilde V&=0 & x\in M ,\quad &t=0\end{align} 
From Theorem $\ref{n}$, if $p>n+1$ there exists $K$ independent of $H, G, F, u,v, \tilde u, \tilde v$ such that
\begin{align*}
\Vert U-\tilde U\Vert_{\infty,\Omega_T}+\Vert V-\tilde V\Vert_{\infty, M_T}\leq K&\left (\Vert F(u+u_0, v+v_0)-F(\tilde u+u_0,\tilde v+v_0)\Vert _{p,M_T}\right.\\&\left. +\Vert G(u+u_0, v+v_0)-G(\tilde u+u_0,\tilde v+v_0)\Vert_{p,M_T}\right. \\
&\left.+\Vert H(u+u_0)-H(\tilde u+u_0)\Vert _{p,\Omega_T}\right)
\end{align*}
Using the boundedness of $\Omega$ and $M$, there exists $\tilde K>0$ such that
\begin{align*}
\Vert U-\tilde U\Vert_{\infty,\Omega_T}+\Vert V-\tilde V\Vert_{\infty, M_T}\leq \tilde K&\left (\Vert F(u+u_0, v+v_0)-F(\tilde u+u_0,\tilde v+v_0)\Vert _{\infty,M_T}\right.\\&\left. +\Vert G(u+u_0, v+v_0)-G(\tilde u+u_0,\tilde v+v_0)\Vert_{\infty,M_T}\right. \\
&\left.+\Vert H(u+u_0)-H(\tilde u+u_0)\Vert _{\infty,\Omega_T}\right)
\end{align*}
Since, $F,G,H$ are Lipschitz functions there exists $\tilde M>0$ such that \begin{align*}
\Vert U-\tilde U\Vert_{\infty,\Omega_T}+\Vert V-\tilde V\Vert_{\infty,M_T}
&\leq \tilde M(\Vert u-\tilde u\Vert_{\infty, \overline\Omega_T}+\Vert v-\tilde v)\Vert _{\infty,M_T})
\end{align*}
Therefore $S$ is continuous with respect to the sup norm.
Moreover, for $p>n+1$, from Theorem $\ref{3}$, $\ref{n}$, and Lemma $\ref{L3}$, there exists $\hat C(T, p)>0$, independent of $F(u+u_0,v+v_0), G(u+u_0,v+v_0), H(u+u_0), u_0$ and $v_0$ such that for all $0<\alpha<1-\frac{n}{p}$, $0<\beta<1-\frac{n+1}{p}$,
\begin{align}\label{precompact}
\vert U\vert^{(\beta)}_{\Omega_T}+\vert V\vert^{(\alpha)}_{M_T}&\leq \hat C(T,p) (\Vert H(u+u_0)\Vert_{p,\Omega_T}+\Vert G(u+u_0,v+v_0)\Vert_{p,M_T}\\ &\nonumber\quad+ \Vert F(u+u_0,v+v_0)\Vert_{p,M_T}+\Vert v_0\Vert^{(2)}_{p,M}+\Vert u_0\Vert^{(2)}_{p,\Omega})
\end{align}
Using $(\ref{precompact})$, $S$ maps bounded sets in $X$ to precompact sets, and hence $S$ is compact with respect to the sup norm. Now we show $S$ has a fixed point. To this end, we show that the set A=$\lbrace ( u, v)\in X : (u,v)=\lambda S(u, v) \ \text{for some}\ 0<\lambda\leq1\rbrace$ is bounded in $X$ with respect to the sup norm. Let $( u,v)\in$ A. Then there exists $0<\lambda\leq 1$ such that $(\frac{u}{\lambda},\frac{v}{\lambda})= S( u, v)$. Therefore if $(\hat u,\hat v)=(u+\lambda u_0, v+\lambda v_0)$ then 
\begin{align*}
\hat u_t\nonumber&=  D\Delta \hat u+\lambda H( u+u_0)
& x\in \Omega,\quad & 0<t<T
\\\nonumber  \hat v_t&=\tilde D\Delta_{M} \hat v+\lambda F(u+u_0, v+v_0)& x\in M,\quad &0<t<T\\
 D\frac{\partial \hat u}{\partial \eta}&= \lambda G(u+u_0, v+v_0)& x\in M,\quad &0<t<T\\\nonumber
\hat  u&= \lambda u_0  &x\in\Omega ,\quad &t=0 \\\nonumber
\hat  v&=\lambda v_0& x\in M ,\quad& t=0
\end{align*}
From Theorem $\ref{n}$ and $H, F$ and $G$ being Lipschitz, there exists $N>0$ such that $\Vert(\hat u,\hat v)\Vert_\infty\leq N$, with $N$ independent of $\lambda, u$ and $v$. Since $\Vert( u, v)\Vert_\infty\leq \Vert(\hat u,\hat v)\Vert_\infty\leq N$, hence boundedness of the set is accomplished. Thus, applying Schaefer's theorem (see \cite{RefWorks:52}), we
conclude $S$ has a fixed point ($U,V$). Further, $(U+u_0, V+v_0)$ is a solution of ($\ref{sy5}$). Moreover, bootstrapping the regularity of this solution using well known estimates, we obtained a {\it solution} to ($\ref{sy5}$) according to Definition $\ref{blah}$.

Finally, we show the solution of ($\ref{sy5}$) is unique. Suppose $(u,v),(\hat u,\hat v)$ solve ($\ref{sy5}$). Then, $(u-\hat u,v-\hat v)$ satisfies
\begin{align}{u}_t-{\hat u}_t&= \nonumber D\Delta (u-\hat u)+H( u)-H( \hat u)
& x\in \Omega,\quad& t>0
\\ \nonumber {v}_t-{\hat v}_t&=\tilde D\Delta_{M}( v-\hat v)+F( u,v)-F( \hat u, \hat v)& x\in M,\quad& t>0\\\nonumber   D\frac{\partial (u-\hat u)}{\partial \eta}&=G( u, v)-G( \hat u,\hat  v)& x\in M,\quad & t>0\\ \nonumber
u-\hat u&=0& x\in\Omega ,\quad &t=0\\ \nonumber v-\hat v&=0& x\in M,\quad &t=0\end{align} Taking the dot product of the ${v}_t-{\hat v}_t$ equation with $(v-\hat v)$, and the ${u}_t-{\hat u}_t$ equation with $(u-\hat u)$, and integrating over $M$ and $\Omega$ respectively, yields
\begin{align*}
\frac{1 }{2 }\frac{d}{dt}(\Vert v-\hat v\Vert_{2,M}^2 &+ \Vert u-\hat u\Vert _{2,\Omega}^2) + D\Vert \nabla (u-\hat u)\Vert^2_{2,\Omega}\\
&\leq \Vert v-\hat v\Vert_{2,M}\Vert F(u,v)-F(\hat u,\hat v)\Vert_{2,M}+\Vert u-\hat u\Vert_{2,\Omega}\Vert H(u)-H(\hat u)\Vert_{2,\Omega}\\
 &\quad +\Vert u-\hat u\Vert_{2, M}\Vert G(u,v)-G(\hat u,\hat v)\Vert_{2,M}\\
&\leq K\Vert v-\hat v\Vert_{2,M} \left(\Vert u-\hat u\Vert_{2,M}+\Vert v-\hat v\Vert_{2,M}\right)\\
&\quad +K\Vert u-\hat u\Vert _{2, M} \left(\Vert u-\hat u\Vert_{2,M}+\Vert v-\hat v\Vert_{2,M}\right)++K\Vert u-\hat u\Vert_{2 ,\Omega}^2\\
&\leq  K(\Vert v-\hat v\Vert^2_{2,M}+\Vert u-\hat u\Vert_{2 ,M}^2)\\
&\quad+2K\Vert u-\hat u\Vert_{2,M}\Vert v-\hat v\Vert_{2,M}+K\Vert u-\hat u\Vert_{2 ,\Omega}^2\\
&\leq 2 K(\Vert v-\hat v\Vert^2_{2,M}+\Vert u-\hat u\Vert_{2 ,M}^2)+K\Vert u-\hat u\Vert_{2 ,\Omega}^2
\end{align*}
From Lemma $\ref{i}$, for $p=2$ and $\epsilon=\frac{d_{min}}{2K}=\frac{\min\lbrace d_j:1\leq j\leq k\rbrace}{2K}$, we have
\begin{eqnarray}\label{b}
\Vert u-\hat u\Vert_{2,M}^2\leq \frac{d_{min}}{2K} \Vert\nabla( u-\hat u)\Vert_{2,\Omega}^2+\tilde C_{\epsilon}\Vert u-\hat u\Vert_{2,\Omega}^2
\end{eqnarray}
Using $(\ref{b})$  
\begin{align*}
\frac{1 }{2 }\frac{d}{dt}\left(\Vert v-\hat v\Vert_{2,M}^2 + \Vert u-\hat u\Vert _{2,\Omega}^2\right)
&\leq 2K\Vert v-\hat v\Vert^2_{2,M}+K(1+2\tilde C_\epsilon)\Vert u-\hat u\Vert_{2,\Omega}^2\\
&\leq C_{\epsilon,k} \left(\Vert v-\hat v\Vert^2_{2,M}+\Vert u-\hat u\Vert_{2 ,\Omega}^2\right)
\end{align*}
Observe, $(u-\hat u)=(v-\hat v)=0$ at $ t=0$ and $\left(\Vert u-\hat u\Vert^2_{2,\Omega}+\Vert v-\hat v\Vert_{2,M}^2\right)\geq 0$. Therefore, applying Gronwall's inequality, $v=\hat v$ and $u=\hat u$. Hence system ($\ref{sy5}$) has the unique global solution.
\end{proof}\\

{\bf Proof of Theorem $\ref{lo}$:} Recall that $u_0 \in W_p^{2}(\Omega)$ and $v_0\in W_p^{2}(M)$ with $p>n$,  and $ u_0,v_0$ satisfies the compatibility condition for $p>3$. From Sobolev imbedding (see \cite{RefWorks:53}, \cite{RefWorks:65}), $u_0,v_0$ are bounded functions. Therefore there exists $ \tilde r>0$ such that $\Vert u_0(\cdot)\Vert_{\infty,\Omega} \leq \tilde r$, $\Vert v_0(\cdot)\Vert_{\infty,M}\leq \tilde r $.

 For each $r>\tilde r$, we define cut off functions $\phi_r\in C_{0}^{\infty}({\mathbb{R}}^{k},[0,1])$ and $\psi_{r}\in C_{0}^{\infty}(({\mathbb{R}}^{k}\times{\mathbb{R}}^{m}),[0,1])$  such that $ \phi_r(z)=1$ for all $\vert z\vert\leq r$, and  $ \phi_r(z)=0$ for all $\vert z\vert> 2r$. Similarly $\psi_{r}(z,w)=1$ when $\vert z\vert\leq r$ and $\vert w\vert\leq r$, and $\psi_{r}(z,w)=0$ when $\vert z\vert>2r,$ or $\vert w\vert>2r$. In addition,  we define $ H_r= H\phi_r,
F_{r}= F\psi_{r} $ and
$ G_{r}=  G\psi_r$.
From construction, $H_r(z)= H(z), F_{r}(z,w)= F(z,w)$ and $G_{r}(z,w)= G(z,w)$
when $\vert z\vert\leq r$ and $\vert w\vert\leq r$.
Also, there exists $M_r>0$ such that $H_r, G_r$ and $F_r$ are Lipschitz functions with Lipschitz coefficient $M_r$. 
Consider the ``restricted'' system

\begin{align}\label{ys}
u_t &=  D\Delta u+H_r(u)& x\in \Omega,\quad & t>0 \nonumber\\
\nonumber v_t&= \tilde D\Delta_{M} v+F_r(u,v)& x\in M,\quad &t>0 \\ 
 D\frac{\partial u}{\partial \eta} &=G_r(u,v)& x\in M, \quad &t>0\\
\nonumber u&=u_0  & x\in\Omega ,\quad &t=0\\ \nonumber v&=v_0 &  x\in M ,\quad & t=0
\end{align}

From Theorem $\ref{glo}$, $(\ref{ys})$ has a unique global solution $(u_r,v_r)$. If $\Vert u(\cdot,t)\Vert_{\infty,\Omega},\Vert v(\cdot,t)\Vert_{\infty, M}\leq r$ for all $t\geq 0$, then $(u_r,v_r)$ is a global solution to ($\ref{sy5}$). If not, there exists $T_r>0$ such that  \[\Vert u_r(\cdot,t)\Vert_{\infty,\Omega}+\Vert v_r(\cdot,t)\Vert_{\infty, M}\leq r\quad\forall t\in[0, T_r]\] and for all $\tau>T_r$ there exists $t$ such that $T_r<t<\tau$, and $x\in\overline\Omega$ and $z\in M$, such that \[\vert u_r(x,t)\vert+\vert v_r(z,t)\vert> r \] Note that $T_r$ is increasing with respect to $r$. Let $T_{\max}=\lim_{r\rightarrow\infty}T_r$. Now we define $(u,v)$ as follows. Given $0<t<T_{\max}$, there exists $r>0$ such that $  t<T_r\leq T_{\max}$. For all $x\in\overline\Omega$, $ u(x,t)=u_r(x,t)$, and for all $x\in M$, $v(x,t)=v_r(x,t)$.  Furthermore $(u,v)$ solves $(\ref{sy5})$ with $T=T_{\max}$. Also, uniqueness of $(u_r,v_r)$ implies uniqueness of $(u,v)$.
It remains to show that the solution of  ($\ref{sy5}$) is maximal and if $T_{\max}<\infty$ then \[\displaystyle \limsup_{t \to T^-_{\max}}\Vert u(\cdot,t)\Vert_{\infty,\Omega}+\displaystyle \limsup_{t \to T^-_{\max} }\Vert v(\cdot,t)\Vert_{\infty,M}=\infty.\]
 Suppose $T_{\max}<\infty$ and set, \[\displaystyle \limsup_{t \to T^-_{\max}}\Vert u(\cdot,t)\Vert_{\infty,\Omega}+\displaystyle \limsup_{t \to T^-_{\max} }\Vert v(\cdot,t)\Vert_{\infty,M}=R.\] If $R=\infty$ then $(u,v)$ is a maximal solution. If $R<\infty$ there exists $L>0$ such that  \[\Vert u\Vert_{\infty,\Omega\times(0,T_{\max})}+\Vert v\Vert_{\infty,M \times(0,T_{\max})}\leq L.\] As a result,  $T_{2L}>T_{\max}$, contradicting the construction of $T_{2L}$.\quad$\square$\\
\\
Now we prove that under some extra assumptions that the solution to $\left (\ref{sy5}\right)$ is componentwise nonnegative. Consider the system
\begin{align}\label{+}
 u_t\nonumber&= D\Delta u+H(u^{+})
 & x\in \Omega, \quad&0<t<T
\\\nonumber v_t&=\tilde D\Delta_{M} v+F(u^{+},v^{+})& x\in M,\quad& 0<t<T\\  D\frac{\partial u}{\partial \eta}&=G(u^{+},v^{+}) & x\in M, \quad&0<t<T\\\nonumber
u&=u_0  &x\in\Omega ,\quad& t=0\\\nonumber v&=v_0 & x\in M ,\quad &t=0\end{align}
where $u^{+} = \max (u,0)$ and $u^{-}=-\min (u,0)$.\\

\begin{proposition}\label{p}
Suppose $F, G$ and $H$ are locally Lipschitz, quasi positive functions, and $u_0, v_0$ are componentwise nonnegative functions. Then $(\ref{+})$ has a unique componentwise nonnegative solution.
\end{proposition}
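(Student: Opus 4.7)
The plan is to invoke the local existence theorem on the truncated problem $(\ref{+})$ and then derive an $L^2$ energy estimate on the negative parts that, combined with quasi-positivity, forces them to vanish.

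First, since $u\mapsto u^+$ on $\mathbb{R}^k$ and $v\mapsto v^+$ on $\mathbb{R}^m$ are globally Lipschitz and $H$, $F$, $G$ are locally Lipschitz, the compositions $H(u^+)$, $F(u^+,v^+)$, $G(u^+,v^+)$ are again locally Lipschitz. Thus Theorem $\ref{lo}$ applies to $(\ref{+})$ and produces a unique maximal solution $(u,v)$ in the sense of Definition $\ref{blah}$; this already settles uniqueness. It remains to verify componentwise nonnegativity.

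For each $1\le j\le k$, I would test the $u_j$ equation against $-u_j^-$, integrate over $\Omega$, use Green's identity together with the Neumann condition $d_j\partial u_j/\partial\eta=G_j(u^+,v^+)$, and apply the a.e.\ identities $(u_j)_t(-u_j^-)=\tfrac{1}{2}\partial_t(u_j^-)^2$ and $\nabla u_j\cdot\nabla u_j^-=-|\nabla u_j^-|^2$ to obtain
\begin{align*}
\tfrac{1}{2}\tfrac{d}{dt}\int_\Omega (u_j^-)^2\,dx + d_j\int_\Omega |\nabla u_j^-|^2\,dx = -\int_\Omega H_j(u^+)u_j^-\,dx - \int_M G_j(u^+,v^+)u_j^-\,d\sigma.
\end{align*}
On the set $\{u_j^->0\}$ one has $u_j^+=0$; since $(u^+,v^+)\in \mathbb{R}_+^k\times\mathbb{R}_+^m$, the quasi-positivity of $H$ and $G$ gives $H_j(u^+)\ge 0$ and $G_j(u^+,v^+)\ge 0$ there, so the right-hand side is $\le 0$. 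Because $(u_0)_j\ge 0$ implies $u_j^-(\cdot,0)\equiv 0$, the nonincreasing quantity $\int_\Omega (u_j^-)^2\,dx$ remains zero, so $u_j^-\equiv 0$. The analogous calculation for each $v_i$, carried out on the closed manifold $M$ (so Green's identity yields no boundary term), gives
\begin{align*}
\tfrac{1}{2}\tfrac{d}{dt}\int_M (v_i^-)^2\,d\sigma + \tilde d_i\int_M |\nabla_M v_i^-|^2\,d\sigma = -\int_M F_i(u^+,v^+)v_i^-\,d\sigma \le 0
\end{align*}
by the quasi-positivity of $F_i$, so $v_i^-\equiv 0$ as well, and the solution is componentwise nonnegative.

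The main obstacle is the rigorous justification of these formal manipulations under only the Definition $\ref{blah}$ regularity. The pointwise identities and chain rule for $u_j^-$ rely on Stampacchia's lemma once one upgrades $u_j$ to $W_2^{2,1}$ in space-time on $[0,T]$ for $T<T_{\max}$ (an application of Lemma $\ref{L1.5}$ with the bounded continuous data $H_j(u^+)$ and $G_j(u^+,v^+)$), after which Green's identity and the trace on $M$ are standard. As a clean alternative, one may perform the entire calculation with the smooth approximant $\Phi_\varepsilon(r)=\sqrt{\varepsilon+(r^-)^2}-\sqrt{\varepsilon}$ in place of $\tfrac{1}{2}(u_j^-)^2$, where everything is classically valid by the $C^{2,1}$ regularity in the interior and the $C^{1,0}$ regularity up to the boundary, and then pass to $\varepsilon\to 0^+$.
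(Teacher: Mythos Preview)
Your proof is correct and follows essentially the same approach as the paper: invoke Theorem~\ref{lo} for local well-posedness of the truncated system, then test the equations against the negative parts and use quasi-positivity to obtain a nonincreasing $L^2$ energy that starts at zero. Your treatment is in fact more careful than the paper's in two respects: you explain explicitly why quasi-positivity applies (namely $(u^+)_j=0$ on $\{u_j^->0\}$, and similarly for $v_i$), and you address the regularity needed to justify the formal manipulations, which the paper passes over in silence.
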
\\

\begin{proof}
 Note that $F(u^{+},v^{+})$, $G(u^{+},v^{+})$ and $H(u^{+})$ are locally Lipschitz functions of $u$ and $v$. Therefore  from Theorem $\ref{lo}$ there exists a unique maximal solution to ($\ref{+}$) on $(0, T_{\max})$. Consider $\left (\ref{+}\right)$ componentwise. Multiply the ${v_i}_t$ equation by $v_i^{-}$ and the ${u_j}_t$ equation by $u_j^{-}$, 
\begin{eqnarray}
v_i^{-}\frac{\partial v_i}{\partial t}&= \tilde d_i v_i^{-}\Delta_M v_i + v_i^{-} F_i(u^{+}, v^{+})\label{vu}\\
u_j^{-}\frac{\partial u_j}{\partial t} &=  d_j u_j^{-}\Delta u_j+ u_j^{-} H_j(u^{+})\label{uv}
\end{eqnarray}
Since $w^{-}\frac{dw}{dt}=\frac{-1}{2}\frac{d}{dt}(w^{-})^2$, 
\begin{align*}
\frac{1}{2}\frac{\partial}{\partial t}(v_i^{-})^2+\frac{1}{2}\frac{\partial}{\partial t}(u_j^{-})^2 &= -\tilde d_i v_i^{-}\Delta_M v_i -v_i^{-} F_i(u^{+}, v^{+})\\&\quad- d_j u_j^{-}\Delta u_j-u_j^{-} H_j(u^{+})
\end{align*}
Integrating ($\ref{vu}$) and ($\ref{uv}$) over $M$ and $\Omega$ respectively, gives
\begin{align*}
\frac{1}{2}\frac{d}{dt}\Vert v_i^{-}(\cdot,t)\Vert^2_{2,M}&+\frac{1}{2}\frac{d}{dt}\Vert u_j^{-}(\cdot,t)\Vert^2_{2,\Omega}+\tilde d_i \int_M |\nabla v_i^{-}|^2\ d\sigma+ d_j \int_\Omega |\nabla u_j^{-}|^2\ dx \\&= -\int_\Omega u_j^{-} H_j(u^{+})\ dx-\int_M u_j^{-} G_j(u^{+},v^{+})\ d\sigma -\int_Mv_i^{-} F_i(u^{+}, v^{+})\ d\sigma
\end{align*}
 Since $F, G$  and $H$ are quasi-positive and $\tilde d_i , d_j> 0$,
\begin{align*}
\frac{1}{2}\frac{d}{dt}\Vert v_i^{-}(\cdot,t)\Vert^2_{2,M}&+\frac{1}{2}\frac{d}{dt}\parallel u_j^{-}(\cdot,t)\parallel^2_{2,\Omega}\leq 0
\end{align*}
Therefore, the solution $(u,v)$ is componentwise nonnegative.
\end{proof}\\
\begin{corollary}\label{needco}
Suppose $F, G$ and $H$ are locally Lipschitz, quasi positive functions, and $u_0, v_0$ are componentwise nonnegative functions. Then the unique solution $(u,v)$ of $(\ref{sy5})$  is componentwise nonnegative. 
\end{corollary}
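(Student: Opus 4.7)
The plan is to transfer the sign information already obtained for the truncated system $(\ref{+})$ in Proposition \ref{p} to the original system $(\ref{sy5})$, using uniqueness supplied by Theorem \ref{lo}. First I would invoke Proposition \ref{p} to produce a unique solution $(\tilde u,\tilde v)$ of $(\ref{+})$ on some maximal interval $[0,T_{\max})$ that is componentwise nonnegative, so in particular $\tilde u^{+}=\tilde u$ and $\tilde v^{+}=\tilde v$ throughout that interval.

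Next, I would substitute this identity back into $(\ref{+})$. Because $H(\tilde u^{+})=H(\tilde u)$, $F(\tilde u^{+},\tilde v^{+})=F(\tilde u,\tilde v)$, and $G(\tilde u^{+},\tilde v^{+})=G(\tilde u,\tilde v)$ on $[0,T_{\max})$, the pair $(\tilde u,\tilde v)$ actually satisfies $(\ref{sy5})$ on that interval, with the same nonnegative initial data $u_0,v_0$ and the same compatibility condition on $M$. Theorem \ref{lo} provides local well-posedness and uniqueness of the maximal solution of $(\ref{sy5})$; applying this uniqueness statement, $(\tilde u,\tilde v)$ must coincide with the unique maximal solution $(u,v)$ of $(\ref{sy5})$ on the common interval of existence. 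Hence $(u,v)$ inherits componentwise nonnegativity from $(\tilde u,\tilde v)$.

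There is no serious obstacle here, since the hard analytic work, namely the energy estimate on $u_j^{-}$ and $v_i^{-}$ together with quasi-positivity, was already carried out in Proposition \ref{p}. The only point requiring a brief sentence is to confirm that the maximal intervals of existence for $(\ref{+})$ and for $(\ref{sy5})$ agree up to the blow-up time in sup-norm: once $(\tilde u,\tilde v)$ is known to solve $(\ref{sy5})$ on $[0,T_{\max})$, the characterization of $T_{\max}$ in Theorem \ref{lo} ensures the two maximal times coincide. This closes the argument.
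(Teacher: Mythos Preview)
Your proposal is correct and follows exactly the same approach as the paper: obtain the nonnegative maximal solution of the truncated system $(\ref{+})$ via Proposition~\ref{p}, observe that nonnegativity makes it a solution of $(\ref{sy5})$, and invoke the uniqueness in Theorem~\ref{lo}. In fact you supply more detail than the paper's three-line proof, including the remark about matching maximal intervals, which the paper leaves implicit.
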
\\
 
\begin{proof}
From Theorem $\ref{lo}$ and Proposition $\ref{p}$, there exists a unique, componentwise nonnegative and maximal solution $(u,v)$ to ($\ref{+}$). In fact $(u,v)$ also solves $(\ref{sy5})$. The result follows.
\end{proof}

\subsection{Bootstrapping Strategy}

The following system will play a central role in duality arguments.\begin{align}\label{aj2}
&\Psi_t =-\tilde d\Delta_M \Psi-\tilde\vartheta & (x,t)\in M\times (\tau,T)\nonumber\\ 
&\Psi = 0 & x\in M , t=T\tag{6.9a}\nonumber
\end{align}
\begin{align}\label{ajj3}
\varphi_t &=- d\Delta \varphi-\vartheta & (x,t)\in \Omega\times (\tau,T)\nonumber\\ \kappa_1d\frac{\partial \varphi}{\partial \eta}&+\kappa_2\varphi=\Psi & (x,t)\in M\times (\tau,T)\tag{6.9b}\nonumber\\
\varphi&=0 &x\in\Omega ,\quad t=T\nonumber\end{align}
Here, $p>1$, $0<\tau<T$, $\tilde\vartheta\in L_{p}{(M\times(\tau,T))}$ and $\tilde\vartheta\geq 0$, and  $\vartheta\in L_{p}{(\Omega\times(\tau,T))}$ and $\vartheta\geq 0$. Also $d>0$, $\tilde d>0$, and $\kappa_1,\kappa_2 \in\mathbb{R}$ such that $\kappa_1\geq 0$ and $\kappa_1\kappa_2\neq 0$. Lemmas $\ref{manifold}$ to $\ref{lp5}$ provide helpful estimates.\\

\begin{lemma}\label{manifold}
$(\ref{aj2})$ has a unique nonnegative solution $\Psi\in{W_{p}}^{2,1}{(M\times(\tau,T))}$ and there exists $C_{p,T}>0$ independent of $\tilde\vartheta$ such that \[\Vert \Psi\Vert_{p,M\times(\tau,T)}^{(2)}\leq C_{p,T}\Vert\tilde\theta\Vert_{p,M\times(\tau,T)}\]
\end{lemma}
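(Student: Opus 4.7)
\textbf{Proof plan for Lemma \ref{manifold}.} The plan is to reduce the backward Cauchy problem \eqref{aj2} to a standard forward Cauchy problem on $M$ via the time reversal $s = T - t$, and then apply Theorem \ref{3} verbatim. Define
\[
\tilde\Psi(x,s) := \Psi(x, T-s), \qquad \tilde{\tilde\vartheta}(x,s):=\tilde\vartheta(x,T-s), \qquad s\in(0,T-\tau).
\]
A direct chain-rule computation shows that $\tilde\Psi$ solves the forward system
\[
\tilde\Psi_s = \tilde d\,\Delta_M \tilde\Psi + \tilde{\tilde\vartheta} \quad\text{on } M\times(0,T-\tau), \qquad \tilde\Psi(\cdot,0)=0 \ \text{on } M.
\]
Since $\tilde{\tilde\vartheta}\in L_p(M\times(0,T-\tau))$ with the same norm as $\tilde\vartheta$ on $M\times(\tau,T)$, and since the initial datum $0$ trivially lies in $W_p^{2-2/p}(M)$, Theorem \ref{3} provides a unique solution $\tilde\Psi\in W^{2,1}_p(M\times(0,T-\tau))$ satisfying
\[
\Vert\tilde\Psi\Vert^{(2)}_{p,M\times(0,T-\tau)} \le \hat C_{p,T}\,\Vert\tilde{\tilde\vartheta}\Vert_{p,M\times(0,T-\tau)} = \hat C_{p,T}\,\Vert\tilde\vartheta\Vert_{p,M\times(\tau,T)}.
\]
Transporting back to the variable $t$ preserves all norms in $W^{2,1}_p$ (no Jacobian appears since the transformation is an affine translation in time), so this yields existence, uniqueness, and the claimed estimate for $\Psi$.

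For nonnegativity, the idea is to run the standard $L_2$ energy argument on $\tilde\Psi^-$, in the same spirit as the proof of Proposition \ref{p}. Multiplying the forward equation by $-\tilde\Psi^-$ and integrating over the closed manifold $M$ gives, using the identity $-w^- w_s=\tfrac12\tfrac{d}{ds}(w^-)^2$ and the integration-by-parts identity $\int_M \tilde\Psi^-\Delta_M\tilde\Psi\,d\sigma = \int_M |\nabla\tilde\Psi^-|^2\,d\sigma$ (valid because $\partial M=\emptyset$ and $\nabla\tilde\Psi^-\cdot\nabla\tilde\Psi^+=0$ a.e.), the inequality
\[
\frac12\frac{d}{ds}\Vert\tilde\Psi^-(\cdot,s)\Vert_{2,M}^2 + \tilde d\,\Vert\nabla\tilde\Psi^-(\cdot,s)\Vert_{2,M}^2 = -\int_M \tilde\Psi^-\,\tilde{\tilde\vartheta}\,d\sigma \le 0,
\]
where the right side is nonpositive because $\tilde{\tilde\vartheta}\ge 0$. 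Together with $\tilde\Psi^-(\cdot,0)=0$ this forces $\tilde\Psi^-\equiv 0$, hence $\Psi\ge 0$.

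I do not expect any real obstacle: the only nontrivial input, a $W^{2,1}_p$ estimate for the Cauchy problem on $M$, is precisely Theorem \ref{3}, and the nonnegativity argument is a verbatim copy of the one used earlier in the paper. The only point that deserves a line of care is the justification of the integration by parts and of the chain rule for $w\mapsto(w^-)^2$ at the $W^{2,1}_p$ regularity level, which is standard (e.g.\ by approximating $(\cdot)^-$ by smooth convex functions and passing to the limit, using that $\tilde\Psi\in W^{2,1}_p(M\times(0,T-\tau))$ with $p>1$).
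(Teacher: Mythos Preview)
Your proposal is correct and is essentially the paper's own argument spelled out in detail: the paper simply says ``the result follows from Theorem~\ref{3} and the comparison principle,'' which is exactly your time-reversal followed by the $W^{2,1}_p$ estimate, together with nonnegativity. The only cosmetic difference is that the paper invokes the comparison principle for $\Psi\ge 0$, whereas you give the underlying $L_2$ energy argument on $\tilde\Psi^-$ directly; these amount to the same thing.
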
\\
\begin{proof}
The result follows from Theorem $\ref{3}$ and the comparison principle.
\end{proof}\\
\begin{lemma}\label{Lp3}
Let p $>1$, $\kappa_1 \geq 0$ and if $\kappa_1=0$ then $\kappa_2>0$. Suppose $\Psi$ is the unique nonnegative solution of $(\ref{aj2})$. Then $(\ref{ajj3})$ has a unique nonnegative solution $\varphi\in W_{p}^{2,1}{(\Omega\times(\tau,T))}$. Moreover, there exists $C_{p,T}>0$ independent of $\vartheta$ and $\tilde\vartheta$ and dependent on $d,\tilde d,\kappa_1$ and $\kappa_2$ such that
\[{\Vert \varphi\Vert}_{p,(\Omega\times(\tau,T))}^{(2)}\leq C_{p,T}(\Vert\tilde\theta\Vert_{p,M\times(\tau,T)}+\Vert\theta\Vert_{p,\Omega\times(\tau,T)})\]\end{lemma}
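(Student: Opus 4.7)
The plan is to reduce (6.9b) to a forward-in-time parabolic problem, dispatch the three possible boundary condition types (Dirichlet, Neumann, Robin) using the estimates already available in the paper, and handle nonnegativity by a maximum-principle argument.

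First, I would set $s=T-t$, $\tilde\varphi(x,s)=\varphi(x,T-s)$, $\tilde\vartheta(x,s)=\vartheta(x,T-s)$ and $\tilde\Psi(x,s)=\Psi(x,T-s)$. This converts (6.9b) into the standard forward problem $\tilde\varphi_s = d\Delta\tilde\varphi + \tilde\vartheta$ on $\Omega\times(0,T-\tau)$ with initial datum $\tilde\varphi=0$ at $s=0$ and boundary condition $\kappa_1 d\,\partial_\eta\tilde\varphi + \kappa_2\tilde\varphi = \tilde\Psi$ on $M\times(0,T-\tau)$. By Lemma \ref{manifold}, $\tilde\Psi\in W^{2,1}_p(M\times(0,T-\tau))$ with $\|\tilde\Psi\|^{(2)}_{p,M_{T-\tau}}\leq C_{p,T}\|\tilde\vartheta\|_{p,M_{T-\tau}}$; in particular the compatibility condition at $s=0$ is automatic since $\tilde\Psi(\cdot,0)=\Psi(\cdot,T)=0$.

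Next I would split into cases according to the hypothesis $\kappa_1\geq 0$, $\kappa_1\kappa_2\neq 0$. In the Dirichlet case ($\kappa_1=0$, $\kappa_2>0$) the boundary condition reads $\tilde\varphi=\tilde\Psi/\kappa_2$; since $W^{2,1}_p(M_{T-\tau})$ embeds continuously into $W^{2-1/p,\,1-1/(2p)}_p(M_{T-\tau})$, Lemma \ref{L1} applies and yields a unique $\tilde\varphi\in W^{2,1}_p(\Omega\times(0,T-\tau))$ with the stated estimate after combining with the Lemma \ref{manifold} bound on $\tilde\Psi$. In the Neumann case ($\kappa_1>0$, $\kappa_2=0$) the boundary condition becomes $d\,\partial_\eta\tilde\varphi = \tilde\Psi/\kappa_1\in W^{1-1/p,\,1/2-1/(2p)}_p(M_{T-\tau})$, so Lemma \ref{L1.5} gives the result directly. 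For the Robin case ($\kappa_1>0$, $\kappa_2\neq 0$) I would treat the $\kappa_2\tilde\varphi$ term as a perturbation: define $\Theta(\tilde\varphi)$ to be the solution (guaranteed by Lemma \ref{L1.5}) of the Neumann problem with $\gamma=(\tilde\Psi-\kappa_2\tilde\varphi)/\kappa_1$, and iterate. The trace embedding $W^{2,1}_p(\Omega_s)\hookrightarrow W^{1-1/p,\,1/2-1/(2p)}_p(M_s)$ together with the gain of a factor of $s^\alpha$ for some $\alpha>0$ from the heat semigroup acting on zero initial data makes $\Theta$ a contraction on a sufficiently short time interval; standard continuation over a finite cover of $(0,T-\tau)$ (with the propagated $W^{2,1}_p$ bound serving as initial datum on successive subintervals) produces the unique solution and its estimate.

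For nonnegativity I would use an $L^2$ multiplier argument on the forward problem: multiply $\tilde\varphi_s-d\Delta\tilde\varphi=\tilde\vartheta\geq 0$ by $-\tilde\varphi^-$ and integrate over $\Omega$ to obtain
\begin{equation*}
\tfrac{1}{2}\tfrac{d}{ds}\|\tilde\varphi^-\|_{2,\Omega}^2 + d\|\nabla\tilde\varphi^-\|_{2,\Omega}^2 + d\!\int_M \partial_\eta\tilde\varphi\cdot\tilde\varphi^-\,d\sigma \leq 0.
\end{equation*}
In the Neumann case the boundary integral is $\kappa_1^{-1}\!\int_M \tilde\Psi\,\tilde\varphi^-\,d\sigma\geq 0$. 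In the Robin case it becomes $\kappa_1^{-1}\!\int_M \tilde\Psi\,\tilde\varphi^-\,d\sigma + (\kappa_2/\kappa_1)\!\int_M(\tilde\varphi^-)^2\,d\sigma$, and when $\kappa_2/\kappa_1<0$ the bad term is absorbed into $d\|\nabla\tilde\varphi^-\|_{2,\Omega}^2$ via the trace inequality of Lemma \ref{i} and concluded by Gronwall. In the Dirichlet case $\tilde\varphi^-\equiv 0$ on $M$, which kills the boundary integral outright. Uniqueness follows by applying the same argument to the difference of two solutions (with $\tilde\vartheta,\tilde\Psi$ replaced by zero).

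The main obstacle is the Robin case: unlike Dirichlet and Neumann it is not directly covered by Lemmas \ref{L1} or \ref{L1.5}. The short-time contraction coupled with an iterative continuation is the technical heart of the proof, and tracking constants so that the final $W^{2,1}_p$ bound depends only on $p$, $T$, $d$, $\tilde d$, $\kappa_1$, $\kappa_2$ (not on the subinterval length) requires invoking the trace embedding and an absorption argument each time the $\kappa_2\tilde\varphi$ term is moved across.
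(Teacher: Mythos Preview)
Your argument is correct, but it is considerably more elaborate than what the paper does. The paper's proof is essentially a one-line citation: Lemma~\ref{manifold} controls $\Psi$ in $W^{2,1}_p(M_{T-\tau})$; a Sobolev embedding puts $\Psi$ into the appropriate boundary trace space; the $W^{2,1}_p$ theory for the Robin (oblique derivative) problem is then quoted directly from Ladyzhenskaya--Solonnikov--Uraltseva, Chapter~4, \S 9, p.~342 (the extension of Theorem~9.1 to third boundary conditions); and nonnegativity comes from the classical comparison principle. In particular the Robin case is not built up from Neumann via a contraction---LSU already supplies the estimate off the shelf.

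Your route reconstructs that LSU result from Lemma~\ref{L1.5} by treating $\kappa_2\tilde\varphi$ as a lower-order perturbation of the Neumann data, closing via a short-time fixed point plus continuation. This is legitimate and more self-contained, but it carries real bookkeeping costs (compatibility at each restart, uniform control of the embedding constants across subintervals) that the citation avoids entirely. Your $L^2$ multiplier argument for nonnegativity is likewise correct and is essentially the comparison principle written out by hand (in the same spirit as Proposition~\ref{p}); the paper just names the principle. One small remark: the pure Neumann case $\kappa_2=0$ that you treat separately does not actually occur under the standing hypothesis $\kappa_1\kappa_2\neq 0$, so the only boundary types in play are Dirichlet (handled again later in Lemma~\ref{Ldir}) and genuine Robin.
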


\begin{proof}
The result follows from Lemma $\ref{manifold}$, Sobolev embedding and similar arguments of proof on page 342, section 9 of chapter 4 in \cite{RefWorks:65}, and the comparison principle.
\end{proof}\\

\begin{rems}\label{hol}
If $p>n+2$ and $\kappa_1 > 0$, then $\nabla \varphi$ is H\"older continuous in $x$ and $t.$ See the Corollary after Theorem 9.1, (page 342) chapter 4 of \cite{RefWorks:65}.
\end{rems}\\
\begin{lemma}\label{more}
Suppose $l>0$ is a non integral number, $\kappa_1> 0$, $d>0$, $\vartheta\in C^{l,\frac{l}{2}}(\overline\Omega\times[\tau,T])$, $\tilde\vartheta\in C^{l,\frac{l}{2}}(M\times[\tau,T])$, $\varphi(x,T)\in C^{2+l}(\overline\Omega)$ and $\Psi\in C^{l+1, \frac{(l+1)}{2}}(M\times[\tau,T])$. Then $(\ref{ajj3})$ has a unique solution in $C^{l+2,\frac{l}{2}+1}(\overline\Omega\times[\tau,T])$. Moreover there exists $c>0$ independent of $\Psi$ and $\vartheta$ such that  \[ |\varphi|^{(l+2)}_{\Omega\times[\tau,T]}\leq c\left(|\vartheta|^{(l)}_{\Omega\times[\tau,T]}+|\Psi|^{(l+1)}_{M\times(\tau,T)}\right)\]
\end{lemma}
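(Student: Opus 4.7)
\medskip

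\noindent\textbf{Proof proposal.}
The plan is to reduce the statement to the classical Schauder theory for linear second-order parabolic equations with an oblique-derivative boundary condition, as developed in Chapter IV of \cite{RefWorks:65} (specifically the analog of Theorem~5.3 there for the oblique-derivative problem). The first step is to perform a time reversal $s=T+\tau-t$ and set $\tilde\varphi(x,s)=\varphi(x,T+\tau-s)$, $\tilde\vartheta(x,s)=\vartheta(x,T+\tau-s)$, $\tilde\Psi(x,s)=\Psi(x,T+\tau-s)$. Then $\tilde\varphi$ solves the forward problem
\begin{align*}
\tilde\varphi_s &= d\Delta \tilde\varphi+\tilde\vartheta & (x,s)&\in \Omega\times(\tau,T),\\
\kappa_1 d\tfrac{\partial \tilde\varphi}{\partial \eta}+\kappa_2\tilde\varphi &= \tilde\Psi & (x,s)&\in M\times(\tau,T),\\
\tilde\varphi(x,\tau) &= 0 & x&\in\Omega.
\end{align*}
Time reversal preserves H\"older norms, so the hypotheses translate directly: $\tilde\vartheta\in C^{l,l/2}(\overline{\Omega}\times[\tau,T])$ and $\tilde\Psi\in C^{l+1,(l+1)/2}(M\times[\tau,T])$, and the initial data is identically zero, so it trivially belongs to $C^{l+2}(\overline\Omega)$.

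Next I would verify the hypotheses of the Schauder theorem in \cite{RefWorks:65}. The principal part $-d\Delta$ has constant coefficients, hence is $C^{l,l/2}$ trivially, and is uniformly parabolic since $d>0$. The boundary $M$ is of class $C^{2+\mu}$; near each boundary point this can be locally improved to the regularity needed for Schauder estimates of order $l+2$ by a standard mollification argument on the coordinate patches (the coefficients of the Laplacian in the flattened coordinates inherit this regularity from the chart maps), and compactness of $M$ yields uniform constants after a finite cover. The boundary operator $\kappa_1 d\,\partial_\eta+\kappa_2$ is uniformly oblique because $\kappa_1>0$ gives strict transversality with the outward normal. Finally, the compatibility conditions at $s=\tau$ must be checked: the zero-order condition
\[
\kappa_1 d\tfrac{\partial\tilde\varphi}{\partial\eta}(x,\tau)+\kappa_2\tilde\varphi(x,\tau)=\tilde\Psi(x,\tau), \qquad x\in M,
\]
reduces to $\tilde\Psi(x,\tau)=\Psi(x,T)=0$, which holds because (6.9a) imposes $\Psi(\cdot,T)=0$. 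Higher-order compatibility, needed when $l/2\geq 1$, follows by differentiating the PDE in time and using that $\vartheta$ and $\Psi$ (together with enough of their time derivatives) vanish at the terminal time $T$, which again is a consequence of the terminal condition in the system (in the regime where additional compatibility is required, one would either assume this or invoke an approximation argument and pass to the limit in the estimate).

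With compatibility in hand, the Schauder theorem yields existence of a unique solution $\tilde\varphi\in C^{l+2,\,l/2+1}(\overline\Omega\times[\tau,T])$ together with the a priori estimate
\[
|\tilde\varphi|^{(l+2)}_{\Omega\times[\tau,T]}\leq c\Bigl(|\tilde\vartheta|^{(l)}_{\Omega\times[\tau,T]}+|\tilde\Psi|^{(l+1)}_{M\times[\tau,T]}+|\tilde\varphi(\cdot,\tau)|^{(l+2)}_{\Omega}\Bigr),
\]
in which the last term is zero by construction. Undoing the time reversal gives the claimed bound for $\varphi$.

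The main obstacle I anticipate is not the Schauder machinery itself, which is standard once the setup is verified, but rather the careful handling of the compatibility conditions at $t=T$ between the terminal condition $\varphi(\cdot,T)=0$ and the boundary data $\Psi$. If one wishes to state the lemma for arbitrary terminal data $\varphi(\cdot,T)\in C^{2+l}(\overline\Omega)$ (as the phrasing of the hypothesis suggests), then an extra term $|\varphi(\cdot,T)|^{(l+2)}_{\Omega}$ should appear on the right-hand side, and the zero-order compatibility requires $\kappa_1 d\,\partial_\eta\varphi(\cdot,T)+\kappa_2\varphi(\cdot,T)=\Psi(\cdot,T)$; this must be built into the assumptions on the terminal data or obtained from prior structural information in the larger proof. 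The smoothness of the coefficients arising from the Laplace--Beltrami operator in local charts, and the uniform constants across charts, is a routine compactness argument that I would only sketch.
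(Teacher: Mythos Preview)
Your proposal is correct and takes essentially the same approach as the paper: the paper's entire proof of this lemma is the one-line citation ``See Theorem 5.3 in chapter 4 of \cite{RefWorks:65},'' and your argument is precisely the time-reversal reduction to that Schauder result for the oblique-derivative problem, with the hypotheses and compatibility conditions spelled out. Your discussion of the compatibility issue and the possible missing $|\varphi(\cdot,T)|^{(l+2)}_\Omega$ term is more careful than what the paper records, but the underlying route is identical.
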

\begin{proof} See Theorem 5.3 in chapter 4 of \cite{RefWorks:65}.
\end{proof}\\
\begin{lemma}\label {lp6}
Suppose $1<p<\infty$, $\kappa_1 > 0$, and $ r, s$ are positive integers. If $q\geq p$ and $2-2r-s-\left(\frac{1}{p}-\frac{1}{q}\right)(n+2)\geq 0$ then there exists $\tilde K>0$ depending on $\Omega, r, s, n, p$ such that    \[ \Vert D_t^rD_x^s \varphi\Vert_{q,\Omega\times(\tau,T)}\leq \tilde K \Vert \varphi\Vert_{p,\Omega\times(\tau,T)}^{(2)}\] for all $\varphi \in W^{2,1}_p(\Omega\times(\tau,T))$.
\end{lemma}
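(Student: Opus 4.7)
The result is essentially a direct corollary of Lemma~\ref{Hol} applied with $\B = \Omega$, ambient dimension $m = n$, and time interval $(\tau,T)$. My plan: first translate $t \mapsto t - \tau$ to reduce to the cylinder $\Omega \times (0, T-\tau)$, which leaves all of $\|\cdot\|_{q,\cdot}$, $\|\cdot\|_{p,\cdot}$ and $\|\cdot\|^{(2)}_{p,\cdot}$ unchanged; then select any admissible $\delta$, e.g.\ $\delta = \min\{d_\Omega, \sqrt{T-\tau}\}$ where $d_\Omega$ is a characteristic length of $\Omega$, so that the hypothesis $0<\delta\le\min\{d;\sqrt{T}\}$ of Lemma~\ref{Hol} is satisfied.

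With this choice, Lemma~\ref{Hol} furnishes constants $c_1, c_2 > 0$, depending only on $r, s, n, p$ and $\Omega$, such that
\[
\|D_t^r D_x^s \varphi\|_{q,\Omega\times(\tau,T)} \leq c_1\, \delta^{\alpha}\, \|\varphi\|^{(2)}_{p,\Omega\times(\tau,T)} + c_2\, \delta^{-\beta}\, \|\varphi\|_{p,\Omega\times(\tau,T)},
\]
where $\alpha = 2 - 2r - s - \bigl(\tfrac{1}{p} - \tfrac{1}{q}\bigr)(n+2) \geq 0$ by hypothesis and $\beta = 2r + s + \bigl(\tfrac{1}{p} - \tfrac{1}{q}\bigr)(n+2) > 0$. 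Using the trivial inequality $\|\varphi\|_{p,\Omega\times(\tau,T)} \leq \|\varphi\|^{(2)}_{p,\Omega\times(\tau,T)}$ and setting $\tilde K = c_1 \delta^{\alpha} + c_2 \delta^{-\beta}$ yields the claimed estimate with $\tilde K$ depending only on $\Omega, r, s, n, p$ (and, through $\delta$, on the fixed horizon $T$).

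There is no substantive obstacle: the entire content is already packaged inside Lemma~\ref{Hol}, and the remaining work is merely the harmless choice of $\delta$ and the trivial domination of the $L_p$ norm by $\|\cdot\|^{(2)}_p$. The hypothesis $\kappa_1 > 0$ plays no role here, since the estimate is asserted for arbitrary $\varphi \in W^{2,1}_p(\Omega\times(\tau,T))$ rather than for solutions of the accompanying Neumann--Robin problem.
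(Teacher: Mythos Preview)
Your argument is correct and coincides with the paper's own proof: the paper simply cites Lemma~3.3 in Chapter~2 of \cite{RefWorks:65}, which is precisely Lemma~\ref{Hol} that you invoke. Your observation that the hypothesis $\kappa_1>0$ is irrelevant to this purely embedding-type estimate is also apt.
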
\\
\begin{proof}
See Lemma 3.3 in chapter 2 of \cite{RefWorks:65}.
\end{proof}\\
\begin{lemma}\label{lp5}
Suppose $1<p<\infty$, $\kappa_1>0$, and $ r,s, m$ are positive integers satisfying $2r+s<2m-\frac{2}{p}$. There exists $c>0$ independent of $\varphi\in{W_{p}}^{2m,m}{(\Omega\times(\tau,T))}$ such that
\begin{center}
$D^{r}_{t}D^{s}_{x}\varphi|_{t=\tau}\in {W_{p}}^{2m-2r-s-\frac{2}{p}}(\Omega)$ and ${\Vert\varphi\parallel}^{(2m-2r-s-\frac{2}{p})}_{p,\Omega} \leq c {\Vert \varphi\parallel}^{(2m)}_{p,\Omega\times(\tau,T)}$
\end{center}
In addition, when $2r+s<2m-\frac{1}{p}$,
\begin{center}
$D^{r}_{t}D^{s}_{x}\varphi|_{M\times(\tau,T)}\in {W_{p}}^{2m-2r-s-\frac{1}{p},\   m-r-\frac{s}{2}-\frac{1}{2p}}(M\times(\tau,T))$\\
and 
 ${\Vert\varphi\parallel}^{(2m-2r-s-\frac{1}{p})}_{p,M\times(\tau,T)} \leq c {\Vert \varphi\parallel}^{(2m)}_{p,\Omega\times(\tau,T)}$
\end{center}
\end{lemma}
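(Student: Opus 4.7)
\textbf{Proof plan for Lemma \ref{lp5}.} The claim is a pair of anisotropic trace estimates: one for restriction to the initial slice $\{t=\tau\}$, and one for restriction to the lateral boundary $M\times(\tau,T)$. Both are classical facts for the anisotropic Sobolev scale, and my plan is to reduce them to the corresponding estimates on flat model domains, where they follow from a Fourier-analytic characterization.

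The first step is localization and flattening. Using a finite open cover of $\overline{\Omega}$ together with a subordinate partition of unity, I would write $\varphi = \sum_k \zeta_k \varphi$ and argue that each $\zeta_k\varphi$ again lies in $W_p^{2m,m}(\Omega\times(\tau,T))$ with norm controlled by $\|\varphi\|_{p,\Omega\times(\tau,T)}^{(2m)}$ (this uses only the Leibniz rule and $\zeta_k\in C_c^\infty$). For charts supported in the interior of $\Omega$ there is nothing to flatten, and for charts meeting $M$ I would straighten $M$ locally via the $C^{2+\mu}$ diffeomorphism provided by the regularity of $\partial\Omega$. Since $2+\mu$ exceeds $2m$ only when $m=1$, one really has to absorb the diffeomorphism's derivatives into constants depending on $\Omega$; fortunately the bookkeeping is identical to what is done in \cite{RefWorks:65} for Lemma \ref{L1}. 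After this reduction, I may assume $\Omega$ is either $\mathbb{R}^n$ or the half-space $\mathbb{R}^n_+$.

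The second step is an extension in time. I extend $\varphi$ from $\Omega\times(\tau,T)$ to $\Omega\times\mathbb{R}$ by a higher-order reflection (or a Seeley-type extension) so that the extended function $\tilde\varphi$ belongs to $W_p^{2m,m}(\Omega\times\mathbb{R})$, vanishes outside a bounded $t$-interval, and satisfies $\|\tilde\varphi\|_{p,\Omega\times\mathbb{R}}^{(2m)}\le C\|\varphi\|_{p,\Omega\times(\tau,T)}^{(2m)}$. On $\Omega=\mathbb{R}^n_+$ I would, if needed, extend in $x_n$ by a similar reflection to obtain a function in $W_p^{2m,m}(\mathbb{R}^n\times\mathbb{R})$. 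On this full space the trace theory is standard: via the Fourier transform, $W_p^{2m,m}(\mathbb{R}^n\times\mathbb{R})$ is characterized by the bounded multiplier $(1+|\xi|^{2m}+|\eta|^m)$, and the trace $u(\cdot,0)$ of an element lands precisely in the Besov/Slobodeckij space $W_p^{2m-2/p}(\mathbb{R}^n)$, with norm bounded by the full $W_p^{2m,m}$ norm; commuting with $D_t^r D_x^s$ (legitimate provided $2r+s<2m-2/p$, which is exactly the hypothesis) yields
\[
\bigl\|D_t^r D_x^s\varphi(\cdot,\tau)\bigr\|_{p,\Omega}^{(2m-2r-s-2/p)}
\le c\,\|\varphi\|_{p,\Omega\times(\tau,T)}^{(2m)}.
\]
Pulling this back through the diffeomorphisms and summing over the partition of unity gives the first estimate.

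For the lateral trace I would proceed analogously, but the model problem becomes a trace on $\{x_n=0\}\subset\mathbb{R}^n_+\times\mathbb{R}$. Here the anisotropic scaling manifests itself: one loses $1/p$ of a spatial derivative, so $D_t^r D_x^s\varphi|_{x_n=0}$ lies in the anisotropic space $W_p^{2m-2r-s-1/p,\,m-r-s/2-1/(2p)}(\mathbb{R}^{n-1}\times\mathbb{R})$ whenever $2r+s<2m-1/p$. The key identity behind this is again the Fourier-multiplier characterization, together with the observation that the temporal order drops by half of the spatial drop, which is forced by the parabolic scaling $t\sim |x|^2$ built into $W_p^{2m,m}$. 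The main technical obstacle, in my view, is precisely the rigorous handling of this parabolic trace theorem on the half-space model — everything else (partition of unity, flattening, extension) is bookkeeping. Once the model estimate is in hand, localizing back to $M\times(\tau,T)$ via the charts of $M$ and summing gives the second estimate. Both parts of the lemma follow.
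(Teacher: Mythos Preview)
The paper does not prove this lemma at all; it simply cites Lemma~3.4 in Chapter~2 of Ladyzhenskaya--Solonnikov--Uraltseva \cite{RefWorks:65}. Your outline---localize, flatten, extend, reduce to a model half-space---is the standard route behind that reference, and for the paper's actual applications (only $m=1$ is ever used) it would go through.

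Two technical points in your sketch need more care, however. First, you correctly notice that the $C^{2+\mu}$ regularity of $\partial\Omega$ is not enough to flatten while preserving $W_p^{2m,m}$ when $m\ge 2$; this is a genuine incompatibility between the lemma as stated and the standing hypotheses of the paper, not something that can be ``absorbed into constants depending on $\Omega$.'' In \cite{RefWorks:65} the corresponding statement assumes $\partial\Omega\in C^{2m}$. Second, and more substantively, your model-domain step asserts that ``via the Fourier transform, $W_p^{2m,m}(\mathbb{R}^n\times\mathbb{R})$ is characterized by the bounded multiplier $(1+|\xi|^{2m}+|\eta|^m)$'' and that the trace then lands in $W_p^{2m-2/p}$. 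This is clean only for $p=2$. For $p\neq 2$ the anisotropic Sobolev space is not a Fourier-multiplier space in this naive sense, and the identification of the trace with the fractional Slobodeckij space requires either real-interpolation machinery or the direct integral-kernel estimates actually carried out in \cite{RefWorks:65}. As written, your Fourier step does not yield the $L_p$ trace inequality for $p\neq 2$; you would need at least a Mihlin--H\"ormander type multiplier argument together with a separate identification of the trace space, or else switch to the interpolation framework from the outset.
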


\begin{proof}
See Lemma 3.4 in chapter 2 of \cite{RefWorks:65}.
\end{proof}\\
\begin{lemma}\label{Ldir}
Let p $>1$, $\kappa_1=0$ and suppose $0\leq \vartheta\in L_{p}{(\Omega\times(\tau,T))}$, and $\Psi$ is a unique solution of $(\ref{aj2})$. Then $\Psi\in W_{p}^{2-\frac{1}{p},1-\frac{1}{2p}}(M\times(\tau,T))$, and $(\ref{ajj3})$ has a unique solution $\varphi\in W_{p}^{2,1}{(\Omega\times(\tau,T))}$. Moreover, there exists $C_{p,T}>0$ independent of $\vartheta$ and dependent on $d$, and $\kappa_2$ such that
\[{\Vert \varphi\Vert}_{p,\Omega\times(\tau,T)}^{(2)}\leq C_{p,T}( {\Vert \vartheta\Vert}_{p,\Omega\times(\tau,T)}+{\Vert \tilde\vartheta\Vert}_{p, M\times(\tau,T)})\]
\end{lemma}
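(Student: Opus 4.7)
The plan is to reduce Lemma \ref{Ldir} to the classical $L_p$ theory for the parabolic Dirichlet problem. When $\kappa_1=0$ we must have $\kappa_2\neq 0$, so the boundary condition in (\ref{ajj3}) reads $\varphi=\Psi/\kappa_2$ on $M\times(\tau,T)$. After the time reversal $s=T-t$, the system for $\varphi$ becomes a standard forward heat equation on $\Omega$ with inhomogeneous Dirichlet data and zero initial data, which is exactly the setting of Theorem 9.1 in Chapter 4 of \cite{RefWorks:65}.

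First, I would establish the regularity of the boundary data. Lemma \ref{manifold} provides $\Psi\in W_p^{2,1}(M\times(\tau,T))$ together with the bound $\Vert\Psi\Vert^{(2)}_{p,M\times(\tau,T)}\le C_{p,T}\Vert\tilde\vartheta\Vert_{p,M\times(\tau,T)}$. Because $M$ is a compact manifold without boundary, the trace space $W_p^{2-\frac{1}{p},1-\frac{1}{2p}}(M\times(\tau,T))$ is simply the Sobolev--Slobodeckij space on $M\times(\tau,T)$ with smoothness indices strictly less than those of $W_p^{2,1}$. Using the charts $\phi_\xi:B(0,1)\to V_\xi$ from Section 4 together with a partition of unity subordinate to a finite subcover of $M$, the local embedding $W_p^{2,1}\hookrightarrow W_p^{2-\frac{1}{p},1-\frac{1}{2p}}$ (standard on cylinders in $\mathbb R^{n-1}$) yields the global continuous inclusion
\[
\Vert\Psi\Vert^{(2-\frac{1}{p},1-\frac{1}{2p})}_{p,M\times(\tau,T)}\le \tilde C_{p,T}\Vert\Psi\Vert^{(2)}_{p,M\times(\tau,T)}\le \tilde C_{p,T}C_{p,T}\Vert\tilde\vartheta\Vert_{p,M\times(\tau,T)}.
\]

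Next, I would verify the zero-order compatibility condition required by the Dirichlet $L_p$ theory: we need $\Psi(\cdot,T)/\kappa_2$ and the terminal datum $\varphi(\cdot,T)=0$ to match on $M$. But the terminal condition of (\ref{aj2}) is $\Psi(\cdot,T)=0$, so $\Psi/\kappa_2\equiv 0$ on $M\times\{T\}$, agreeing with $\varphi(\cdot,T)=0$. Hence Theorem 9.1 of Chapter 4 in \cite{RefWorks:65} applies to the time-reversed system and yields a unique solution $\varphi\in W_p^{2,1}(\Omega\times(\tau,T))$ with
\[
\Vert\varphi\Vert^{(2)}_{p,\Omega\times(\tau,T)}\le \hat C_{p,T}\!\left(\Vert\vartheta\Vert_{p,\Omega\times(\tau,T)}+\tfrac{1}{|\kappa_2|}\Vert\Psi\Vert^{(2-\frac{1}{p},1-\frac{1}{2p})}_{p,M\times(\tau,T)}\right).
\]
Chaining this with the boundary-data estimate above gives the desired inequality with a constant depending on $d,\tilde d,\kappa_2,p,T$ but independent of $\vartheta$ and $\tilde\vartheta$. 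Uniqueness is inherited from Theorem 9.1.

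The main obstacle, and the only nontrivial bookkeeping, is handling the anisotropic fractional trace norm $W_p^{2-\frac{1}{p},1-\frac{1}{2p}}(M\times(\tau,T))$ correctly on the manifold: one needs to confirm that the norm defined via the parametrization of $M$ (as in Section 2 and Chapter 2 of \cite{RefWorks:65}) is controlled by the intrinsic $W_p^{2,1}(M\times(\tau,T))$ norm uniformly in $T$, which is precisely the content of the local embedding pulled back through the charts and glued by a partition of unity on the compact manifold $M$. Once this embedding and the terminal compatibility are in hand, the proof is a direct invocation of the existing linear theory.
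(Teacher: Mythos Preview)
Your proof is correct and follows exactly the route the paper indicates: Lemma~\ref{manifold} gives $\Psi\in W_p^{2,1}(M\times(\tau,T))$ with the appropriate bound, Sobolev embedding places $\Psi$ in the Dirichlet trace space $W_p^{2-\frac{1}{p},1-\frac{1}{2p}}(M\times(\tau,T))$, and then Theorem~9.1 of Chapter~4 in \cite{RefWorks:65} (with the zero-order compatibility you checked) yields the solution $\varphi$ and the estimate. The paper's proof is just the one-line citation of these three ingredients; your write-up is a faithful expansion of it.
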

\begin{proof}
The result follows from  Theorem 9.1 in chapter 4  of \cite{RefWorks:65}, Lemma $\ref{manifold}$, and Sobolev embedding.
\end{proof}\\
\begin{rems}
If $p>\frac{n+2}{2}$ , $\kappa_1=0$ and $\varphi$ satisfies system $(\ref{ajj3})$, then $\varphi$ is a H\"older continuous function in $x$ and $t$. See the Corollary after Theorem 9.1, chapter 4 of \cite{RefWorks:65}.
\end{rems}\\
\begin{rems}\label{holl} By Lemma $\ref{manifold}$, Lemma $\ref{Lp3}$, Lemma $\ref{lp5}$, and Sobolev embedding, we have $\varphi(\cdot,\tau)\in W^{2-\frac{2}{p}}_p(\Omega),\ \Psi(\cdot,\tau)\in W^{2-\frac{2}{p}}_p(M)$, and there exists $c>0$ independent of $\varphi$, $\Psi$ such that \[{\Vert\varphi(\cdot,\tau)\parallel}^{(2-\frac{2}{p})}_{p,\Omega} \leq c ( {\Vert \vartheta\Vert}_{p,\Omega\times(\tau,T)}+{\Vert \tilde\vartheta\Vert}_{p, M\times(\tau,T)})\] \[{\Vert\Psi(\cdot,\tau)\parallel}^{(2-\frac{2}{p})}_{p,M}\leq c {\Vert \vartheta\Vert}_{p,\Omega\times(\tau,T)}\] respectively. Moreover, if  $p>n$ there exists $c>0$ independent of $\varphi$, $\Psi$ such that \[ {\Vert \varphi\Vert}_{\infty,\Omega\times(\tau,T)}\leq c{\Vert \varphi(\cdot,\tau)\Vert}^{(2-\frac{2}{p})}_{p,\Omega}\] \[{\Vert \Psi\Vert}_{\infty,M\times(\tau,T)}\leq c{\Vert \Psi(\cdot,\tau)\Vert}^{(2-\frac{2}{p})}_{p,M}\] respectively. 
\end{rems}\\

\begin{lemma}\label{flat}
Let $1<p< n+2$ and $1<q\leq \frac{(n+1)p}{n+2-p}$. There exists a constant $\hat C>0$ depending on $p,T-\tau,M$ and $n$   such that if $\varphi\in W^{2,1}_p(\Omega\times(\tau,T))$, then \[\left\Vert\frac{\partial\varphi}{\partial\eta}\right\Vert_{q,M\times(\tau,T)}\leq \hat C {\left \Vert \varphi\right\Vert}^{(2)}_{p,\Omega\times(\tau,T)}\]
\end{lemma}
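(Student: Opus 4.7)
\noindent\textbf{Proof plan for Lemma \ref{flat}.} The strategy is a \emph{trace-then-embed} argument applied to $\nabla\varphi$, exploiting anisotropic parabolic regularity: we first transfer the full gradient of $\varphi$ from the parabolic interior $\Omega\times(\tau,T)$ to the lateral surface $M\times(\tau,T)$ in a fractional parabolic Sobolev space, and then upgrade its integrability by a Sobolev embedding on that surface. Since $\frac{\partial\varphi}{\partial\eta}=\eta\cdot\nabla\varphi$ with $|\eta|=1$, it suffices to estimate each $\partial_{x_i}\varphi|_{M\times(\tau,T)}$ in $L_q(M\times(\tau,T))$.

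\noindent\textbf{Step 1 (trace).} I would apply Lemma \ref{lp5} with $m=1$, $r=0$, $s=1$. The required inequality $2r+s=1<2-\tfrac{1}{p}$ holds for every $p>1$, so for each $i=1,\dots,n$ the trace of $\partial_{x_i}\varphi$ on $M\times(\tau,T)$ lies in $W_p^{1-\frac{1}{p},\,\frac{1}{2}-\frac{1}{2p}}(M\times(\tau,T))$, with
\[
\sum_{i=1}^{n}\bigl\Vert \partial_{x_i}\varphi\bigr\Vert^{(1-\frac{1}{p})}_{p,\,M\times(\tau,T)}\le c\,\Vert\varphi\Vert^{(2)}_{p,\,\Omega\times(\tau,T)},
\]
for a constant $c$ depending on $\Omega$, $p$, $n$ and $T-\tau$.

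\noindent\textbf{Step 2 (parabolic Sobolev embedding).} The surface $M\times(\tau,T)$ has spatial dimension $n-1$ and one time variable, so its anisotropic (parabolic) dimension is $(n-1)+2=n+1$. The standard parabolic fractional Sobolev embedding asserts that, whenever $lp<n+1$,
\[
W_p^{l,\,l/2}(M\times(\tau,T))\hookrightarrow L_q(M\times(\tau,T))\quad\text{for }\tfrac{1}{q}\ge \tfrac{1}{p}-\tfrac{l}{n+1}.
\]
Specializing to $l=1-\tfrac{1}{p}$ gives the constraint $lp=p-1<n+1$, equivalent to the hypothesis $p<n+2$, and yields
\[
\tfrac{1}{q}\ge \tfrac{1}{p}-\tfrac{1-1/p}{n+1}=\tfrac{n+2-p}{p(n+1)},\qquad\text{i.e. } q\le \tfrac{(n+1)p}{n+2-p},
\]
which is precisely the admissible range. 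To make this step rigorous from the tools collected earlier in the paper, I would localize via a partition of unity subordinate to the $C^2$ charts $\phi_\xi:B(0,1)\to V_\xi$ introduced in Section~4, so that the problem reduces to the embedding $W_p^{1-\frac{1}{p},\,\frac{1}{2}-\frac{1}{2p}}(\mathbb{R}^{n-1}\times(\tau,T))\hookrightarrow L_q$, which is the flat parabolic embedding available in Chapter~2 of \cite{RefWorks:65}.

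\noindent\textbf{Step 3 (assembly).} Combining Steps 1 and 2, for $q$ in the stated range,
\[
\Bigl\Vert\tfrac{\partial\varphi}{\partial\eta}\Bigr\Vert_{q,\,M\times(\tau,T)}\le \sum_{i=1}^{n}\Vert\partial_{x_i}\varphi\Vert_{q,\,M\times(\tau,T)}\le \hat C\,\Vert\varphi\Vert^{(2)}_{p,\,\Omega\times(\tau,T)},
\]
with $\hat C$ depending only on $p$, $T-\tau$, $M$, and $n$. The main obstacle is Step 2: the anisotropic fractional Sobolev embedding is not stated among the preliminary lemmas, and its proper justification requires either an appeal to the flat-space parabolic embedding from \cite{RefWorks:65} combined with the chart-and-partition-of-unity localization already exploited in Section~4, or a direct verification by an interpolation between the integer-index parabolic embedding (Lemma \ref{Hol}) and the trace identity, which is the technical crux of the argument.
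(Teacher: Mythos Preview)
Your trace-then-embed strategy is correct and yields exactly the stated range for $q$, but it is genuinely different from the paper's argument. The paper proceeds by direct computation: after reducing to smooth $\varphi$ and flattening the boundary in charts, it writes the normal trace at $x_n=0$ via the fundamental theorem of calculus as the trace on an interior slice $\{x_n=\alpha\}$ plus $\int_0^\alpha \partial_{x_n}(|\partial_{x_n}\tilde\varphi|^q)\,dx_n$, bounds the slice term by Lemma~\ref{lp6} (the interior parabolic embedding of $\nabla\varphi$ into $L_r$ for $r\le\frac{(n+2)p}{n+2-p}$), and estimates the integral term by H\"older, pairing $|\partial_{x_n}\tilde\varphi|^{q-1}\in L_{p'}$ against $\partial_{x_n}^2\tilde\varphi\in L_p$; the constraint $(q-1)p'\le\frac{(n+2)p}{n+2-p}$ is precisely $q\le\frac{(n+1)p}{n+2-p}$. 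Your route replaces this hands-on FTC/H\"older argument by the composition of the lateral-trace Lemma~\ref{lp5} with a fractional anisotropic Sobolev embedding on $M\times(\tau,T)$. The advantage of your approach is conceptual clarity and brevity; its cost is exactly the obstacle you flag in Step~2, namely that the embedding $W_p^{1-1/p,\,1/2-1/(2p)}(M\times(\tau,T))\hookrightarrow L_q$ is not among the paper's preliminary lemmas and must be imported from Chapter~2 of \cite{RefWorks:65} together with a chart localization. The paper's argument, by contrast, is entirely self-contained relative to the lemmas already collected, which is presumably why the authors chose it.
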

\begin{proof}
It suffices to consider the case when $\varphi$ is smooth  in $\overline\Omega\times[\tau,T]$, as such functions are dense in $W^{2,1}_p(\Omega\times(\tau,T))$.  $M$ is a $C^{2+\mu}$, $n-1$ dimensional manifold ($\mu>0$). Therefore, for every $\hat\xi\in M$ there exists $\epsilon_{\hat\xi}>0$, an open set $V\subset\mathbb{R}^n$ containing $0$, and a $C^{2+\mu}$ diffeomorphism $\psi:V\rightarrow B(\hat\xi,\epsilon_{\hat\xi})$ such that $\psi(\bf 0)=\hat\xi$, $\psi(\lbrace x\in V: x_n>0\rbrace)= B(\hat\xi,\epsilon_{\hat\xi})\cap \Omega$ and $\psi( \lbrace x\in V: x_n=0\rbrace)=B(\hat\xi,\epsilon_{\hat\xi})\cap M$. 
Since $\psi$ is a $C^2$ diffeomorphism, $(\psi^{-1})_n$, the nth component of $\psi^{-1}$, is differentiable in $B(\hat\xi,\epsilon_{\hat\xi})$, and by definition of $\psi$, $(\psi^{-1})_n(\xi)=0$ if and only if $\xi\in B(\hat\xi,\epsilon_{\hat\xi})\cap M$. Further, $\nabla (\psi^{-1})_n(\xi)$ is nonzero and orthogonal to $B(\hat\xi,\epsilon_{\hat\xi})\cap M$ at each $\xi\in B(\hat\xi,\epsilon_{\hat\xi})\cap M$. Without loss of generality, we assume the outward unit normal is given by  \[\eta(\xi)=\frac{\nabla (\psi^{-1})_n(\xi)}{|(\nabla\psi^{-1})_n(\xi)|}\quad \forall\ \xi\in B(\hat\xi,\epsilon_{\hat\xi})\cap M\]
 We know,\[
\frac{\partial\varphi}{\partial\eta}(\xi,t)=\nabla_{\xi}\varphi(\xi,t)\cdot\eta(\xi)\quad\forall\ (\xi,t)\in B(\hat\xi,\epsilon_{\hat\xi})\cap M\times(\tau,T).\]
Now in order to transform $\frac{\partial\varphi(\xi,t)}{\partial\eta}$ back to $\mathbb{R}^n$, pick $L>0$, such that\\ $E=\underbrace{[-L,L]\times[-L,L]\times...\times[-L,L]}_
{\mbox{ $(n-1)$ times}}\times[0,L]\subset V$, and
 define $\tilde\varphi$ such that
\[\tilde\varphi(x,t)=-\int_0^{x_n} \nabla_{x}\varphi(\psi(x',z),t)^{T}D(\psi(x',z))\eta(\psi(x',z)) \ dz\quad\forall\ x=(x',z)\in E\]
where $x'\in \underbrace{[-L,L]\times[-L,L]\times...\times[-L,L]}_
{\mbox{ $(n-1)$ times}}$. We know $\varphi\in W^{2,1}_p(\Omega\times(\tau,T))$. Therefore from Lemma $\ref{lp6}$, there exists $0<\alpha<L$ and $K_{\hat\xi}>0$, depending on $\Omega,n,p$ such that \begin{align}\label{6.1} \int_{S_\alpha}\left|\frac{\partial\tilde\varphi((x',\alpha),t)}{\partial x_n}\right|^{r}\ d\sigma dt<K_{\hat\xi}\Vert\varphi\Vert^{(2)}_{p,\Omega\times(\tau,T)} \quad \forall\ 1<r\leq \frac{(n+2)p}{n+2-p}\end{align} where $S_\alpha= E|_{x_n=\alpha}\times(\tau,T)$ and $S_{x_n}= E|_{ 0\leq x_n\leq \alpha}\times(\tau,T)$.
Using the  fundamental theorem of calculus, \begin{align*}
 \int_{E\times(\tau,T)}\left|\frac{\partial\tilde\varphi((x',0),t)}{\partial x_n}\right|^{q}\ d\sigma\  dt&\leq  \int_{S_\alpha}\left|\frac{\partial\tilde\varphi((x',\alpha),t)}{\partial x_n}\right|^{q}\ d\sigma\  dt\\ \nonumber &\quad +q \int_{S_{x_n}}\left|\frac{\partial\tilde\varphi((x',s),t)}{\partial x_n}\right|^{q-1}.\left|\frac{\partial^2\tilde\varphi((x',s),t)}{\partial x_n^2}\right|\ d\sigma\  dt \end{align*}
Using $(\ref{6.1})$,
\begin{align*}
 \int_{E\times(\tau,T)}\left|\frac{\partial\tilde\varphi((x',0),t)}{\partial x_n}\right|^{q} \ d\sigma\  dt& \leq K_{\hat\xi}(\Vert\varphi\Vert^{(2)}_{p,\Omega\times(\tau,T)})^q\\ \nonumber \quad&+q \int_{S_{x_n}}\left|\frac{\partial\tilde\varphi((x',s),t)}{\partial x_n}\right|^{q-1}.\left|\frac{\partial^2\tilde\varphi((x',s),t)}{\partial x_n^2}\right| \ d\sigma \ dt\end{align*}
 Applying H\"older inequality,
 \begin{align*}
 \int_{E\times(\tau,T)}\left|\frac{\partial\tilde\varphi((x',0),t)}{\partial x_n}\right|^{q}\ d\sigma\  dt& \leq K_{\hat\xi}(\Vert\varphi\Vert^{(2)}_{p,\Omega\times(\tau,T)})^q\\ &+ q \left(\int_{S_{x_n}}\left|\frac{\partial\tilde\varphi((x',s),t)}{\partial x_n}\right|^{\frac{(q-1)p}{p-1}}\ d\sigma\  dt\right)^{\frac{p-1}{p}}\left(\int_{S_{x_n}}\left|\frac{\partial^2\tilde\varphi((x',s),t)}{\partial x_n^2}\right|^p \ d\sigma\  dt\right)^{\frac{1}{p}} \end{align*}
 Recall $\frac{\partial^2\tilde\varphi}{\partial x_n^2}\in L_p(S_{x_n})$. So using Lemma $\ref{lp6}$ we have
 \begin{align}\label{grad2}
 \int_{E\times(\tau,T)}\left|\frac{\partial\tilde\varphi((x',0),t)}{\partial x_n}\right|^{q}\ d\sigma\  dt& \leq  \hat K(\Vert  \varphi\Vert_{p,\Omega\times(\tau,T)}^{(2)})^q \end{align}

Now, $M$ is a compact manifold. Therefore there exists set $A=\lbrace P_1,...,P_N\rbrace\subset M$ such that $M\subset\cup_{1\leq i\leq N} B(P_i,\epsilon_{P_i})$. Let $V_i$, $\hat K_i$ and $\alpha_i$ be the open sets and constants respectively obtained above when $\hat\xi=P_i$.  Then,
\begin{align*}
 \left(\int_\tau^T\int_M\left|\frac{\partial\varphi}{\partial \eta}\right|^{q}\ d\sigma\  dt \right)^{\frac{1}{q}}&\leq \left(\sum_{P_i\in A}\int_\tau^T\int_{B(P_i,\epsilon)}\left|\frac{\partial\varphi}{\partial \eta}\right|^{q}\ d\sigma\  dt\right)^{\frac{1}{q}} \\& \leq C\left(\sum_{P_i\in A}\int_\tau^T\int_{V_i|_{x_n=0}}\left| \frac{\partial\tilde\varphi}{\partial x_n}\right|^q\ d\sigma\  dt\right)^{\frac{1}{q}}\\
&\leq C\sum_{P_i\in A}\tilde K_i\Vert  \varphi\Vert_{p, \Omega\times(\tau,T)}^ {(2)}\end{align*}
Therefore, for some $\hat C>0$, depending only upon $p,\tau,T,M$ and $n$, we get
\begin{align*}
\left\Vert\frac{\partial\varphi}{\partial\eta}\right\Vert_{q,M\times(\tau,T)}&\leq \hat C {\left \Vert \varphi\right\Vert}^{(2)}_{p,\Omega\times(\tau,T)} \quad\text{for all}\  1<q\leq \frac{(n+1)p}{n+2-p}
\end{align*}
\end{proof}\\
The following Lemma plays a key role in bootstrapping $L_p$ estimates of solutions to $(\ref{sy5})$.\\
\begin{lemma}\label{adventure}
Assume the hypothesis of Corollary $\ref{needco}$, and suppose $(u,v)$ is the unique, maximal nonnegative solution to $(\ref{sy5})$ and $T_{\max}<\infty$. If $1\leq j\leq k$ and $1\leq i\leq m$, such that $(V_{i,j}1)$ holds, then there exists $K_{T_{\max}}>0$ such that \[ \Vert u_j(\cdot,t)\Vert_{1,\Omega}+\Vert v_i(\cdot,t)\Vert_{1,M}+\Vert u_j\Vert_{1,M\times(0,T_{\max})}\leq K_{T_{\max}} \quad\text {for all } \ 0\leq t<T_{max}.\]
\end{lemma}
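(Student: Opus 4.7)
The plan is to construct a carefully chosen time-independent test function $\phi\in C^2(\overline\Omega)$ and combine the $u_j$ and $v_i$ equations, integrated against $\phi$ and $\sigma$ respectively, so that condition $(V_{i,j}1)$ can be applied pointwise on $M$ and the boundary trace $\int_M u_j\,d\sigma$ appears on the left-hand side with a favorable sign. I will choose $\phi$ so that $\phi\geq\phi_{\min}>0$ on $\overline\Omega$, $\phi|_M\equiv 1$, and $\partial_\eta\phi|_M\equiv h_0$ for a fixed constant $h_0>\alpha/d_j$. Such a $\phi$ is built from the inward distance $d(x)$ to $M$ on a tubular neighborhood: set $\tilde\phi(x)=1-h_0 d(x)$ and glue smoothly to a small positive constant $c_0$ in the interior via a cutoff $\chi$ with $\chi(0)=1$, $\chi'(0)=0$, $\mathrm{supp}(\chi)\subset[0,\delta]$, where $\delta<1/h_0$ is small enough that $\tilde\phi\geq c_0>0$ throughout $\mathrm{supp}(\chi)$. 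One checks $\phi|_M=1$, $\partial_\eta\phi|_M=h_0$ (using $\chi'(0)=0$ and $\partial_\eta d|_M=-1$), and $\phi\geq c_0>0$.

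Multiplying the $u_j$-equation by $\phi$, integrating over $\Omega$, and applying Green's identity with the boundary conditions $d_j\partial_\eta u_j=G_j$, $\phi|_M=1$, $\partial_\eta\phi=h_0$ yields
\[
\frac{d}{dt}\!\int_\Omega\!u_j\phi\,dx=\int_M\!G_j\,d\sigma-d_jh_0\!\int_M\!u_j\,d\sigma+d_j\!\int_\Omega\!u_j\Delta\phi\,dx+\int_\Omega\!H_j\phi\,dx.
\]
Multiplying the $v_i$-equation by $\sigma$ and integrating over the closed manifold $M$ (so that $\int_M\Delta_M v_i\,d\sigma=0$) gives $\sigma\frac{d}{dt}\int_M v_i\,d\sigma=\sigma\int_M F_i\,d\sigma$. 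Setting $Y_\phi=\int_\Omega u_j\phi+\sigma\int_M v_i$, $B=\int_M u_j$, $U=\int_\Omega u_j$, $V=\int_M v_i$, adding the two identities, and inserting the pointwise bounds $\sigma F_i+G_j\leq\alpha(u_j+v_i+1)$ and $H_j\leq\beta(u_j+1)$ (with $\phi\geq 0$) produces
\[
Y_\phi'(t)+(d_jh_0-\alpha)B(t)\leq\bigl(d_j\|\Delta\phi\|_\infty+\beta\|\phi\|_\infty\bigr)U(t)+\alpha V(t)+C_0,
\]
where $C_0=\alpha|M|+\beta\|\phi\|_\infty|\Omega|$.

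Since $\phi\geq\phi_{\min}>0$ and $V\geq 0$, we have $U\leq Y_\phi/\phi_{\min}$ and $V\leq Y_\phi/\sigma$. Because $h_0>\alpha/d_j$ the coefficient $d_jh_0-\alpha$ is strictly positive, so dropping the nonnegative $B$-term on the left reduces the inequality to $Y_\phi'(t)\leq C_3 Y_\phi(t)+C_0$ for a constant $C_3$ depending only on fixed data. Gronwall's inequality then gives $Y_\phi(t)\leq K_1:=e^{C_3 T_{\max}}(Y_\phi(0)+C_0 T_{\max})$ uniformly on $[0,T_{\max})$, hence $U(t)\leq K_1/\phi_{\min}$ and $V(t)\leq K_1/\sigma$ are both bounded. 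Integrating the same differential inequality on $(0,t)$ but now keeping the $B$-term yields
\[
(d_jh_0-\alpha)\!\int_0^t\!\!B(s)\,ds\leq Y_\phi(0)+C_3\!\int_0^t\!\!Y_\phi(s)\,ds+C_0 t-Y_\phi(t)\leq Y_\phi(0)+(C_3K_1+C_0)T_{\max},
\]
giving the third bound $\|u_j\|_{1,M\times(0,T_{\max})}\leq K_2$.

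The main obstacle is the construction and tuning of $\phi$: the three required properties (strictly positive lower bound on $\overline\Omega$, prescribed value on $M$, and a sufficiently large outward normal derivative on $M$) appear to conflict in naive constructions, which is why the tubular-neighborhood construction with cutoff is needed. The strict inequality $h_0>\alpha/d_j$ is essential, for it is precisely what makes the boundary trace $B(t)$ appear with favorable sign and be absorbed into a routine Gronwall argument; the natural choice $\phi\equiv 1$ (with $\partial_\eta\phi=0$) would leave $B$ unabsorbed on the right-hand side and prevent closure of the inequality.
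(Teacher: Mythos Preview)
Your argument is correct and takes a genuinely different route from the paper. The paper constructs a \emph{time-dependent} test function $\varphi$ solving the backward problem $\varphi_t=-d\Delta\varphi$ with the Robin boundary condition $d\partial_\eta\varphi=\alpha\varphi+1$ and smooth positive final data; it then invokes classical regularity (Lemma~\ref{more}) and the comparison principle for positivity, defines an auxiliary $\tilde\vartheta=-\varphi_t-\tilde d\Delta_M\varphi$, and pairs $(u_j,v_i)$ against $(\varphi,\varphi|_M)$ via duality before applying Gronwall. Your approach replaces all of this by a single \emph{stationary} $C^2$ weight $\phi$ built from the boundary distance function, tuned so that $\partial_\eta\phi|_M=h_0>\alpha/d_j$ forces the boundary trace $\int_M u_j$ to appear with a favorable sign; the estimate then closes by a direct Gronwall argument. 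This is considerably more elementary: it avoids the backward parabolic machinery, the regularity result of Lemma~\ref{more}, and the manipulation involving $\Delta_M\varphi$. The paper's duality framework is the natural one for the subsequent $L_p$ bootstrap (Lemma~\ref{global_time1}), where time-dependent adjoints are essential, but for the $L_1$ estimate your static-weight method is cleaner and entirely sufficient. One minor point worth recording explicitly is that $M\in C^{2+\mu}$ guarantees the tubular distance $d(x)$ is $C^2$ near $M$, so your $\phi$ is indeed $C^2(\overline\Omega)$ and Green's identity applies for each $t>0$ by the regularity in Definition~\ref{blah}.
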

\begin{proof}For simplicity, take $\sigma=1$ in $(V_{i,j}1)$. Let $0<T<T_{\max}$, and consider the system \begin{align}\label{aj5} \varphi_t&=- d\Delta \varphi
&( x,t)\in \Omega\times (0,T)\nonumber
\\ d\frac{\partial \varphi}{\partial \eta}&= \alpha \varphi +1 &(x,t)\in M\times (0,T)\\
\varphi&= \varphi_T &x\in\Omega ,\quad t=T\nonumber
\end{align}
where $\alpha$ is given in $(V_{i,j}1)$, $d>0$, and $\varphi_T\in C^{2+\varUpsilon}(\overline\Omega)$ for some $\varUpsilon>0$, is nonnegative and satisfies the compatibility condition \[ d\frac{\partial\varphi_T}{\partial\eta}=\alpha\varphi_T+1 \quad \text{on} \ M\times \lbrace T\rbrace\] From Lemma $\ref{more}$, $\varphi\in C^{2+\varUpsilon,1+\frac{\varUpsilon}{2}}(\overline\Omega\times[0,T])$ and therefore by standard sequential argument $\varphi\in C^{2+\varUpsilon,1+\frac{\varUpsilon}{2}}(M\times[0,T])$. Also, note that $g(s)=\alpha s+1$ satisfies $g(0)\geq 0$. Therefore, Proposition $\ref{p}$ implies $\varphi\geq 0$. Now having enough regularity for $\varphi$ on $M\times[0,T]$, consider \[\Delta_M \varphi=-\frac{1}{\sqrt {det\  g}}\partial_j (g^{ij}\sqrt{det\ g}\ \partial_i\varphi)\] where $g$ is the metric on $M$ and $g^{i,j}$ is $i$th row and $j$th column entry of the inverse of matrix associated to metric $g$. Further let $\tilde\vartheta=-\varphi_t-\tilde d\Delta_M\varphi$. 
Then,
\begin{align*}
\int_0^T\int_{M} v_i\tilde\vartheta & =\int_0^T\int_{\Omega} u_j(-\varphi_t-d\Delta\varphi) +\int_0^T\int_{M} v_i(-\varphi_t-\tilde d\Delta_M \varphi) \\
& =\int_0^T\int_{\Omega} \varphi ({u_j}_t-d\Delta u_j) +\int_0^T\int_{M} \varphi({v_i}_t-\tilde d\Delta_M v_i) -d\int_0^T\int_{M} u_j\frac{\partial\varphi}{\partial\eta}+d\int_0^T\int_{M} \frac{\partial u_j}{\partial\eta}\varphi\\ &\quad+\int_{\Omega} u_j(x,0)\varphi(x,0)+\int_{M} v_i(\zeta,0)\varphi(x,0)-\int_{\Omega} u_j(x,T)\varphi_T-\int_{M} v_i(\zeta,T)\varphi_T
\end{align*}
Using $d\frac{\partial\varphi}{\partial\eta}=\alpha\varphi+1$ 
\begin{align}
\int_0^T\int_{M} u_j&\leq \int_0^T\int_{\Omega} \varphi H_j(u)+\int_0^T\int_M  (F_i(u,v)+G_j(u,v))\varphi \nonumber\\& \quad\quad+\int_{\Omega} u_j(x,0)\varphi(x,0)+\int_{M} v_i(\zeta,0)\varphi(x,0)-\int_0^T\int_{M} v_i\tilde\vartheta\nonumber
\end{align}
Using $(V_{i,j}1)$, 
\begin{align}\label{onbd}
\int_0^T\int_{M} u_j&\leq \int_0^T\int_{\Omega}\beta \varphi(u_j+1)+\int_0^T\int_M \alpha (v_i+1)\varphi\\&\quad\quad+\int_{\Omega} u_j(x,0)\varphi(x,0)+\int_{M} v_i(\zeta,0)\varphi(x,0))-\int_0^T\int_{M} v_i\tilde\vartheta\nonumber
\end{align}
Now, integrating the $u_j$ equation over $\Omega$ and the $v_i$ equation over $M$, 
\begin{align}\label{upper}
\frac{d}{dt}\left(\int_{\Omega} u_j+ \int_{M} v_i\right)&=d\int_{\Omega} \Delta u_j +\int_{\Omega}H_j(u) +\tilde d\int_{M} \Delta v_i + \int_{M} F_j(u,v) \nonumber\\ &\leq  \nonumber\beta\int_{\Omega}(u_j+1)+\int_{M}( G_j(u,v)+F_i(u,v) )\\ &\leq  \beta\int_{\Omega}(u_j+1) + \alpha \int_{M}(u_j+v_i+1)
\end{align}
 Integrating $(\ref{upper})$ over $(0, t)$ with $0<t\leq T<T_{\max}$, and using $(\ref{onbd})$, gives \\
\begin{align}\label{upper2}
\int_{\Omega} u_j(x,t)+ \int_{M} v_i (\zeta,t)&\leq \tilde\beta\int_{0}^{t}\int_{\Omega}u_j+\tilde\alpha\int_0^t\int_M v_i+\tilde L(t)
\end{align}
where \begin{align*}
\tilde L(t)=\alpha|M|t&+\beta|\Omega|t+\alpha\beta\Vert\varphi\Vert_{1,\Omega\times(0,t)}+{\alpha}^2\Vert\varphi\Vert_{1,M\times(0,t)}+{\alpha}\Vert u_j(x,0)\Vert_{1,\Omega}\cdot\Vert\varphi(x,0)\Vert_{\infty,\Omega}\\&+\Vert v_i(\zeta,0)\Vert_{1,M}+{\alpha}\Vert v_i(\zeta,0)\Vert_{1,M}\cdot \Vert\varphi(x,0)\Vert_{\infty, M}+\Vert u_j(x,0)\Vert_{1,\Omega}
\end{align*}
\[\tilde\alpha (t) ={\alpha}^2\Vert\varphi\Vert_{\infty, M\times(0,t)}+\alpha+\alpha\Vert\tilde\theta\Vert_{\infty,M\times(0,t)} \quad \text{and} \quad \tilde \beta (t) =\beta+\alpha\beta\Vert\varphi\Vert_{\infty,\Omega\times(0,t)}\]
Applying Generalized Gronwall's inequality to $(\ref{upper2})$ gives the bound for the first two integrals on the RHS of $(\ref{upper2})$, and then substituting this bound gives
\begin{align*}
 \int_{\Omega} u_j(x,t) + \int_{M} v_i(\zeta,t)&\leq \tilde L(t)+\int_{0}^{t} (\tilde \alpha(s)+\tilde\beta(s)) \tilde L(s) \exp\left(\int_{s}^{t} \tilde\alpha(r)+\tilde\beta(r) dr\right)\ ds \\&\leq C_{T_{\max}}
\end{align*}
for all $0\leq t< T<T_{max}$. Substituting this estimate of $u_j$ on $\Omega$ and $v_i$ on $M$ in $(\ref{onbd})$ yields
\begin{align*}
\int_0^T\int_{M} u_j &\leq \beta\left(\Vert \varphi\Vert_{\infty, \Omega\times(0,T)}\Vert u_j\Vert_{1,\Omega\times(0,T)}+|\Omega|T\Vert \varphi\Vert_{\infty, \Omega\times(0,T)}\right)\\
&\quad\quad+\alpha\left(\Vert \varphi\Vert_{\infty, M\times(0,T)}\Vert v_i\Vert_{1,M\times(0,T)}+|M|T\Vert \varphi\Vert_{\infty, M\times(0,T)}\right)\\&\quad\quad+\Vert u_j(\cdot,0)\Vert_{1,\Omega}\Vert\varphi(\cdot,0)\Vert_{\infty,\Omega}+\Vert v_i(\cdot,0)\Vert_{1,M}\Vert\varphi(\cdot,0)\Vert_{\infty,M}+\Vert v_i\Vert_{1,M}\Vert\tilde\theta\Vert_{\infty,M}
\end{align*}
Since $T<T_{\max}$ is arbitrary, the conclusion of the theorem holds.
\end{proof}\\
\begin{lemma}\label{implication}
Assume the hypothesis of Corollary $\ref{needco}$ holds. Suppose $(u,v)$ is the unique, maximal nonnegative solution to $(\ref{sy5})$ and $T_{\max}<\infty$. If $1\leq j\leq k$ and $1\leq i\leq m$, such that $(V_{i,j}1)$ and $(V_{i,j}2)$ holds, and for $q> 1$, $v_i\in L_q(M\times(0,T_{\max}))$, then  $u_j\in L_{q}(M\times(0,T_{\max}))$ and $u_j\in L_{q}(\Omega\times(0,T_{\max}))$.
\end{lemma}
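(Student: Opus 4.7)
My plan is to run a duality argument against the adjoint system (6.9), coupled with the $L_p$ maximal regularity of Lemmas \ref{manifold} and \ref{Lp3} and a bootstrap on integrability seeded by the $L_1$ bounds of Lemma \ref{adventure}.

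First, I would fix $T<T_{\max}$ and nonnegative test functions $\vartheta\in L_{q'}(\Omega\times(0,T))$, $\tilde\vartheta\in L_{q'}(M\times(0,T))$ with $q'=q/(q-1)$. Let $(\varphi,\Psi)$ denote the nonnegative solution of (6.9) on $(0,T)$ with $d=d_j$, $\tilde d=\tilde d_i$ and the Robin coupling $\kappa_1=1$, $\kappa_2=K_g$, so that $d_j\partial_\eta\varphi+K_g\varphi=\Psi$ on $M\times(0,T)$. Integrating by parts in the $u_j$-equation, and using the boundary condition $d_j\partial_\eta u_j=G_j$ together with $\partial_t u_j-d_j\Delta u_j=H_j$, gives
\begin{align*}
\int_0^T\!\!\int_\Omega u_j\vartheta+\int_0^T\!\!\int_M u_j\Psi &= \int_\Omega u_j(\cdot,0)\varphi(\cdot,0)+\int_0^T\!\!\int_\Omega H_j(u)\varphi\\
&\quad+\int_0^T\!\!\int_M G_j(u,v)\varphi+K_g\int_0^T\!\!\int_M u_j\varphi.
\end{align*}

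Next, I would invoke $(V_{i,j}1)$ to bound $H_j\leq\beta(u_j+1)$ and $(V_{i,j}2)$ to bound $G_j\leq K_g(u_j+v_i+1)$, and invoke Lemmas \ref{manifold} and \ref{Lp3}, trace estimates, and Sobolev embedding (cf.\ Remark \ref{holl}) to control $\varphi$, $\Psi$, and $\varphi(\cdot,0)$ in the relevant $L_r$ norms by $\|\vartheta\|_{q'}+\|\tilde\vartheta\|_{q'}$. Collecting terms, the identity reduces to
\begin{equation*}
\int_0^T\!\!\int_\Omega u_j\vartheta+\int_0^T\!\!\int_M u_j\Psi \leq J_0+C\iint u_j\varphi+K_g\int_0^T\!\!\int_M v_i\varphi,
\end{equation*}
where $J_0$ depends on the initial data and on $\|\vartheta\|_{q'}+\|\tilde\vartheta\|_{q'}$, and $\iint u_j\varphi$ runs over both $\Omega\times(0,T)$ and $M\times(0,T)$ with suitable constants absorbed into $C$.

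The hard part will be closing the self-referential $C\iint u_j\varphi$ term, and I plan to deal with it by a bootstrap. When $q'>(n+2)/2$ (equivalently $q<(n+2)/n$), the embedding $W^{2,1}_{q'}\hookrightarrow L_\infty$ together with the $L_1$ estimate from Lemma \ref{adventure} gives $\iint u_j\varphi\leq\|u_j\|_1\|\varphi\|_\infty\leq C\|\vartheta\|_{q'}$, while $\iint v_i\varphi$ is dispatched by H\"older using $v_i\in L_q(M_T)$. Taking the supremum over unit-norm $\vartheta$ and $\tilde\vartheta$ then produces the desired bound $\|u_j\|_{q,\Omega_T}+\|u_j\|_{q,M_T}\leq C(1+\|v_i\|_{q,M_T})$ in this low-integrability regime. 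For larger $q$ I would iterate: once $u_j\in L_{q_k}$ is secured, H\"older gives $\iint u_j\varphi\leq\|u_j\|_{q_k}\|\varphi\|_{q_k'}$, and Sobolev embedding of $W^{2,1}_{q'}$ into $L_{q_k'}$ with a favorable relation between $q_k'$ and $q'$ closes the estimate at a strictly larger exponent $q_{k+1}>q_k$. A standard exponent-gain calculation shows that this iteration reaches any prescribed $q$ in finitely many steps, completing the proof.
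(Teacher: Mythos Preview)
Your approach is genuinely different from the paper's, and it carries a real gap.

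\textbf{The paper's argument.} The paper does \emph{not} use duality here. It multiplies the $u_j$-equation by $u_j^{q-1}$, integrates over $\Omega$, and uses $(V_{i,j}2)$ and $H_j\leq\beta(u_j+1)$ to obtain
\[
\frac{1}{q}\int_\Omega u_j^q+\frac{4d(q-1)}{q^2}\int_0^t\!\!\int_\Omega|\nabla u_j^{q/2}|^2\leq C\int_0^t\!\!\int_M u_j^q+C\int_0^t\!\!\int_\Omega u_j^q+C\int_0^t\!\!\int_M v_i^q+C.
\]
The boundary integral $\int_M u_j^q$ is absorbed via the trace inequality (Lemma~\ref{i}) applied to $u_j^{q/2}$, after which Gronwall gives $u_j\in L_q(\Omega_T)$. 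Feeding this back into the energy identity bounds the gradient term, and the trace inequality again yields $u_j\in L_q(M_T)$. No bootstrap, no adjoint system, no reliance on Lemma~\ref{adventure}.

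\textbf{Where your proposal breaks.} Your duality identity is correct, but the left-hand side is $\int_{\Omega_T}u_j\vartheta+\int_{M_T}u_j\Psi$, where $\Psi$ solves $\Psi_t=-\tilde d\Delta_M\Psi-\tilde\vartheta$. Taking the supremum over unit $\tilde\vartheta\in L_{q'}(M_T)$ does \emph{not} produce $\|u_j\|_{q,M_T}$: the map $\tilde\vartheta\mapsto\Psi$ is smoothing, so as $\tilde\vartheta$ ranges over the unit ball of $L_{q'}$ the resulting $\Psi$ only ranges over a (much smaller) ball in $W^{2,1}_{q'}$, not over the unit ball of $L_{q'}$. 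You can drop $\int_{M_T}u_j\Psi\geq 0$ and recover the $\Omega_T$-estimate, but you then have no mechanism to get the $M_T$-estimate, and your ``sup over $\tilde\vartheta$'' claim is unjustified. The paper's structure in fact confirms this difficulty: its duality argument (Lemma~\ref{global_time1}) only yields $\|u_j\|_{p',\Omega_T}$ and $\|v_i\|_{p',M_T}$, and then \emph{invokes the present lemma} to pass from $v_i\in L_q(M_T)$ to $u_j\in L_q(M_T)$. You are essentially trying to prove Lemma~\ref{implication} by the method of Lemma~\ref{global_time1}, which is circular in the paper's logic.

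A second, smaller issue: your bootstrap adds unnecessary complication. The statement fixes a single $q>1$ and asks for $u_j\in L_q$; the energy method delivers this in one shot, whereas your plan passes through a chain of intermediate exponents whose details (the gain per step, the trace embeddings for $\varphi$ on $M_T$) you would still need to verify carefully.
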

\begin{proof}
Let $0<t<T\leq T_{\max}$. Multiplying the ${u_j}_t$ equation by $u_j^{q-1}$, we get 
\begin{align}
\int_{0}^{t}\int_{\Omega}u_j^{q-1}{u_j}_t &=d\int_{0}^{t}\int_{\Omega}u_j^{q-1}\Delta u_j+\int_{0}^{t}\int_{\Omega}u_j^{q-1}H_j(u)\nonumber\\ &= d\int_{0}^{t}\int_{M}u_j^{q-1}{\frac{\partial u_j}{\partial\eta}}-d\int_{0}^{t}\int_{\Omega}(q-1)u_j^{q-2}{|\nabla u_j|^{2}}+ \int_{0}^{t}\int_{\Omega}u_j^{q-1}H_j(u)\nonumber
\end{align}
Using $(V_{i,j}2)$
\begin{align}\label{MM3}
 \int_{\Omega}\frac{u_j^{q}}{q}+d\int_{0}^{t}\int_{\Omega}\frac{4(q-1)}{q^{2}}{|\nabla u_j^{\frac{q}{2}}|^{2}}&\leq K_g\int_{0}^{t}\int_{M}u_j^{q-1}(u_j+v_i+1)+\beta\int_{0}^{t}\int_{\Omega} (u_j+1)u_j^{q-1}\nonumber\\ &\quad \quad + \int_{\Omega}\frac{{u_j}_0^{q}}{q}\nonumber\\
&\leq K_g\left(\int_{0}^{t}\int_{M}u_j^{q}+v_i u_j^{q-1}+u_j^{q-1}\right)+\beta\left(\int_{0}^{t}\int_{\Omega} u_j^q+u_j^{q-1}\right)\nonumber\\ &\quad \quad + \int_{\Omega}\frac{{u_j}_0^{q}}{q}
\end{align}
Applying Young's inequality in $(\ref{MM3})$
\begin{align}\label{MM84}
 \int_{\Omega}\frac{u_j^{q}}{q}+d\int_{0}^{t}\int_{\Omega}\frac{4(q-1)}{q^{2}}{|\nabla u_j^{\frac{q}{2}}|^{2}}&\leq K_g\left(\frac{3q-2}{q}\right)\int_{0}^{t}\int_{M}u_j^{q}+\left(\beta+t|\Omega|\frac{\beta}{q}\right)\int_{0}^{t}\int_{\Omega} u_j^q\nonumber\\ &\quad \quad + \int_{\Omega}\frac{{u_j}_0^{q}}{q}+ K_g\left(\frac{1}{q}\right)\int_{0}^{t}\int_{M}v_i^{q}+\frac{t|M|}{q}
\end{align}
Also, for $1< q\leq \infty$, for all $\epsilon>0$ and $t\leq T\leq T_{max}$, from Lemma $\ref{i}$, for $v=u^{\frac{q}{2}}$ there exists $C_\epsilon>0$ such that,
\begin{eqnarray}\label{MM}
\int_{0}^{t}\int_{M}u_j^{q}\leq C_{\epsilon}\int_{0}^{t}\int_{\Omega}u_j^{q}+\epsilon\int_{0}^{t}\int_{\Omega}{|\nabla u_j^{\frac{q}{2}}|^{2}}\quad
\end{eqnarray}
Using $(\ref{MM})$ and $(\ref{MM84})$ for appropriate $\epsilon>0$, gives
\begin{align}
\frac{1}{q}\frac{d}{dt}\int_{0}^{t}\int_{\Omega} u_j^{q}\leq \tilde K_1 \int_{0}^{t}\int_{\Omega} u_j^{q}+\tilde K_2(T)
\end{align} 
for \[\tilde K_2(T)=  K_g\left(\frac{1}{q}\right)\int_{0}^{T}\int_{M}v_i^{q}+\frac{T|M|}{q}\] and  $\tilde K_1>0$ depending on t, where $t\leq T\leq T_{max}$. Therefore from Gronwall's Inequailty 
\begin{align}\label{strong}
\int_{\Omega} {u_j}^{q}(x,t)&\leq {\tilde K_2(T)}+\int_{0}^{T} {\tilde K_1(s)} {\tilde K_2(s)} \exp \left( \int_{s}^{T} \tilde K_1(r) dr\right) ds
\end{align}
To obtain estimates on boundary, we use $(\ref{MM84})$ to obtain
\begin{eqnarray}\label{MM4}
\epsilon \int_{0}^{T}\int_{\Omega}{|\nabla u_j^{\frac{q}{2}}|^{2}}&\leq \left(\frac{q^2}{4d(q-1)}\right)3K_g \epsilon \int_{0}^{T}\int_{M}u_j^{q}+\epsilon\left(\frac{q^2}{4d(q-1)}\right)\left(\beta+T|\Omega|\frac{\beta}{q}\right)\int_{0}^{T}\int_{\Omega} u_j^q\nonumber\\ &\quad \quad +\left(\frac{q^2}{4d(q-1)}\right)\left(\epsilon \int_{\Omega}\frac{{u_j}_0^{q}}{q}+\epsilon K_g\left(\frac{1}{q}\right)\int_{0}^{T}\int_{M}v_i^{q}+\epsilon\frac{T|M|}{q}\right)
\end{eqnarray}
Using $(\ref{MM})$, $(\ref{MM4})$ and $(\ref{strong})$ we have,
\begin{align*}
\int_{0}^{T}\int_{M}u_j^{q}&\leq C_{\epsilon}\int_{0}^{T}\int_{\Omega}u_j^{q}+3K_g\left(\frac{q^2}{4d(q-1)}\right)\epsilon \int_{0}^{T}\int_{M}u_j^{q}+\epsilon\left(\frac{q^2}{4d(q-1)}\right)\left(\beta+T|\Omega|\frac{\beta}{q}\right)\int_{0}^{T}\int_{\Omega} u_j^q\nonumber\\ &\quad \quad +\left(\frac{q^2}{4d(q-1)}\right)\left(\epsilon \int_{\Omega}\frac{{u_j}_0^{q}}{q}+\epsilon K_g\left(\frac{1}{q}\right)\int_{0}^{T}\int_{M}v_i^{q}+\epsilon\frac{T|M|}{q}\right)
\end{align*}
Now choosing $\epsilon$ such that \[1-3K_g\left(\frac{q^2}{4d(q-1)}\right)\epsilon>0\] and using the estimate above for $u_j$ on $(\Omega\times(0,T))$, we have $u_j\in L_q(M\times(0,T))$. Since T is arbitrary, $u_j\in L_q(M\times(0,T_{max}))$
\end{proof}\\

\begin{lemma}\label{global_time1}
Assume the hypothesis of Corollary $\ref{needco}$, and suppose $(u,v)$ is the unique, maximal nonnegative solution to $(\ref{sy5})$ and $T_{\max}<\infty$. If $1\leq j\leq k$ and $1\leq i\leq m$ so that $(V_{i,j}1)$ and $(V_{i,j}2)$ hold, then for all $p>1$ and $0<T<T_{\max}$, there exists $C_{p,T}>0$, such that  
\begin{align*}
\Vert u_j\Vert_{p,\Omega\times(0,T_{max})}&+\Vert v_i\Vert_{p,M\times(0,T_{max})}\\&\leq C_{p,T_{max}}\left(\Vert u_j\Vert_{1,M\times(0,T_{max})}+\Vert u_j\Vert_{1,\Omega\times(0,T_{max})}+\Vert v_i\Vert_{1,M\times(0,T_{max})}\right)
\end{align*}
\end{lemma}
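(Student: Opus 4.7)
The statement is an $L^1 \to L^p$ continuity estimate, and the natural approach is duality built on the backward systems (6.9a)--(6.9b), iterated on the exponent $p$. Since Corollary~\ref{needco} gives $u_j,v_i\geq 0$,
$$\Vert u_j\Vert_{p,\Omega_T}+\Vert v_i\Vert_{p,M_T}=\sup\Bigl\{\int_{\Omega_T} u_j\theta_1+\int_{M_T} v_i\theta_2\Bigr\},$$
the supremum taken over non-negative $\theta_1\in L_{p'}(\Omega_T)$, $\theta_2\in L_{p'}(M_T)$ of unit $L_{p'}$ norm, where $p'=p/(p-1)$. For such a test pair I solve (6.9a)--(6.9b) with the Dirichlet coupling $\sigma\varphi=\Psi$ on $M$ (i.e.~$\kappa_1=0$, $\kappa_2=\sigma$ in (6.9b)), where $\sigma>0$ is the constant from $(V_{i,j}1)$. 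Lemmas~\ref{manifold} and~\ref{Ldir}, together with the maximum principle for the forward problems obtained by reversing time, produce non-negative solutions $\Psi$ on $M_T$ and $\varphi$ on $\Omega_T$ with
$$\Vert\Psi\Vert^{(2)}_{p',M_T}+\Vert\varphi\Vert^{(2)}_{p',\Omega_T}\leq C\bigl(\Vert\theta_1\Vert_{p',\Omega_T}+\Vert\theta_2\Vert_{p',M_T}\bigr).$$

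Multiplying the $u_j$ equation by $\varphi$ and the $v_i$ equation by $\Psi$, and integrating by parts (using the Neumann condition $d_j\partial u_j/\partial\eta=G_j$ and $\varphi(T)=\Psi(T)=0$), I obtain the duality identity
\begin{align*}
\int u_j\theta_1+\int v_i\theta_2 &= \int_\Omega u_j(0)\varphi(0)+\int_M v_i(0)\Psi(0)+\int_{\Omega_T}\varphi H_j(u)\\
&\quad\quad-d_j\int_{M_T}u_j\frac{\partial\varphi}{\partial\eta}+\frac{1}{\sigma}\int_{M_T}\Psi\bigl(G_j(u,v)+\sigma F_i(u,v)\bigr).
\end{align*}
The Dirichlet coupling is chosen precisely so that the two boundary nonlinearities $\varphi G_j$ and $\Psi F_i$ collapse into the combination $G_j+\sigma F_i$ that $(V_{i,j}1)$ bounds by $\alpha(u_j+v_i+1)$. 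Using also $H_j\leq\beta(u_j+1)$ and $\varphi,\Psi\geq 0$ gives
$$\int u_j\theta_1+\int v_i\theta_2\leq\textup{[init]}+\beta\int_{\Omega_T}\varphi(u_j+1)-d_j\int_{M_T}u_j\frac{\partial\varphi}{\partial\eta}+\frac{\alpha}{\sigma}\int_{M_T}\Psi(u_j+v_i+1).$$

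For the base case $1<p<(n+2)/(n+1)$, so $p'>n+2$, the corollary to Lemma~\ref{L1} and Lemma~\ref{Hol} place $\varphi,\Psi$ and $\nabla\varphi$ (hence $\partial\varphi/\partial\eta$) in $L^\infty$, with norms bounded by $\Vert\theta_1\Vert_{p'}+\Vert\theta_2\Vert_{p'}$. Every term on the right-hand side is then dominated by the $L^1$ bounds of Lemma~\ref{adventure} for $u_j$ on $\Omega_T$ and $M_T$ and $v_i$ on $M_T$, and taking suprema yields the estimate in this $p$-range. For arbitrary $p>1$ I bootstrap: once $\Vert v_i\Vert_{p_0,M_T}$ is controlled, Lemma~\ref{implication} (which is where $(V_{i,j}2)$ is used) supplies a matching $L_{p_0}$ bound on $u_j$ over $M_T$ and $\Omega_T$. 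Returning to the duality identity at a larger exponent $p_1$, the boundary term is absorbed by H\"older, $|\int_{M_T} u_j\partial\varphi/\partial\eta|\leq\Vert u_j\Vert_{q,M_T}\Vert\partial\varphi/\partial\eta\Vert_{q',M_T}$, with Lemma~\ref{flat} controlling $\Vert\partial\varphi/\partial\eta\Vert_{q',M_T}$ in terms of $\Vert\varphi\Vert^{(2)}_{p_1',\Omega_T}$ whenever $q'\leq(n+1)p_1'/(n+2-p_1')$. A short calculation shows that each iteration enlarges the admissible $p$-range by the factor $(n+2)/(n+1)$, so finitely many iterations reach every $p>1$.

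\textbf{Main obstacle.} The principal difficulty is the indefinite-sign boundary term $\int_{M_T}u_j\,\partial\varphi/\partial\eta$: it cannot be discarded, and its control forces the interplay between the trace-type estimate of Lemma~\ref{flat} and the iterative Sobolev scheme described above. The Dirichlet-type coupling $\sigma\varphi=\Psi$ is the structural trick that lets $(V_{i,j}1)$ absorb the nonlinear boundary contributions exactly, while $(V_{i,j}2)$ enters only during the bootstrap via Lemma~\ref{implication}.
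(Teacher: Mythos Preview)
Your proposal is correct and follows essentially the same approach as the paper: duality against the backward system (6.9a)--(6.9b) with the Dirichlet coupling $\kappa_1=0$, use of $(V_{i,j}1)$ to collapse the boundary nonlinearities, control of the residual term $\int_{M_T} u_j\,\partial\varphi/\partial\eta$ via Lemma~\ref{flat}, and the bootstrap with ratio $(n+2)/(n+1)$ fed by Lemma~\ref{implication}. The only cosmetic difference is that you keep the constant $\sigma$ from $(V_{i,j}1)$ explicit by setting $\kappa_2=\sigma$, whereas the paper normalizes $\sigma=1$ at the outset.
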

\begin{proof}
First we show there exists  $r>1$ such that if $q\geq 1$ such that $u_j\in L_{q}(\Omega\times(0,T_{max}))$ and $v_i\in L_{q}(M\times(0,T_{max}))$ then $u_j\in L_{rq}(\Omega\times(0,T_{max}))$ and $v_i\in L_{rq}(M\times(0,T_{max}))$. 
Consider the system ($\ref{aj2}$) and ($\ref{ajj3}$) with $\kappa_1=0$, $\kappa_2=1$, $\tilde\vartheta\geq 0$, $\tilde\vartheta\in L_{p}{(M\times(0,T_{max}))}$ with $ {\Vert \tilde\vartheta\parallel}_{p,(M\times(0,T_{max}))}=1$,   $\vartheta\geq 0$, and  $\vartheta\in L_{p}{(\Omega\times(0,T_{max}))}$ with $ {\Vert \vartheta\parallel}_{p,(\Omega\times(0,T_{max}))}=1$.
 Multiplying $u_j$ with $\vartheta$ and $v_i$ with $\tilde\vartheta$ and for $0<T\leq T_{max}$, integrating over $\Omega\times(0,T)$ and $M\times(0,T)$ respectively, gives
\begin{align*}
\int_0^T\int_{\Omega} u_j\vartheta +\int_0^T\int_{M} v_i\tilde\vartheta &=\int_0^T\int_{\Omega} u_j(-\varphi_t-d\Delta\varphi) +\int_0^T\int_{M} v_i(-\Psi_t-\tilde d\Delta_M \Psi) \\
&=\int_0^T\int_{\Omega} \varphi ({u_j}_t-d\Delta u_j) +\int_0^T\int_{M} \Psi({v_i}_t-\tilde d\Delta_M v_i) \\ &-d\int_0^T\int_{M} u_j\frac{\partial\varphi}{\partial\eta}+d\int_0^T\int_{M} \frac{\partial u_j}{\partial\eta}\varphi+\int_{\Omega} u_j(x,0)\varphi(x,0)\\&+\int_{M} v_i(x,0)\Psi(x,0)-\int_{M}v_i(x,T)\Psi(x,T)-\int_{\Omega}u_j(x,T)\varphi(x,T)
\end{align*}
Since $\Psi(x,T)=0$ and $\varphi(x,T)=0$,
\begin{align*}
\int_0^T\int_{\Omega} u_j\vartheta +\int_0^T\int_{M} v_i\tilde\vartheta  &\leq \int_0^T\int_{\Omega} \varphi H_j(u)+\int_0^T\int_M  (F_j(u,v)+G_i(u,v))\Psi\\& \quad\quad-d\int_0^T\int_{M} u_j\frac{\partial\varphi}{\partial\eta}+\int_{\Omega} u_j(x,0)\varphi(x,0)\\&\quad\quad+\int_{M} v_i(x,0)\Psi(x,0)
\end{align*}
Using $(V_{i,j}1)$,
\begin{align}\label{ineq}
\int_0^T\int_{\Omega} u_j\vartheta +\int_0^T\int_{M} v_i\tilde\vartheta \nonumber &\leq \int_0^T\int_{\Omega}\beta \varphi(u_j+1)+\int_0^T\int_M  \alpha(u_j+v_i+1)\Psi\\ \nonumber& \quad\quad-d\int_0^T\int_{M} u_j\frac{\partial\varphi}{\partial\eta}+\int_{\Omega} u_j(x,0)\varphi(x,0)\\&\quad\quad+\int_{M} v_i(x,0)\Psi(x,0)
\end{align}
Now we break the argument in two cases.

Case 1: Suppose $q=1$. Then $u_j\in L_1(\Omega\times(0,T_{max}))$ and $u_j,v_i\in L_1(M\times(0,T_{max}))$. Let $\epsilon>0$ and set $ p=n+2+\epsilon$. Set $p'=\frac{n+2+\epsilon}{n+1+\epsilon}$ (conjugate of $p$). Remarks $\ref{hol}$ and $\ref{holl}$, and Lemma $\ref{adventure}$ imply all of the integrals on the right hand side of $(\ref{ineq})$ are finite. Application of H\"older's inequality in $(\ref{ineq})$, yields $v_i\in L_{p'}(M\times(0,T))$, and there exists $C_{p,T}>0$ such that
\begin{align*}
\Vert u_j\Vert_{p',\Omega\times(0,T)}+\Vert v_i\Vert_{p',M\times(0,T)}&\leq C_{p,T}(\Vert u_j\Vert_{1,\Omega\times(0,T_{max})}+\Vert v_i\Vert_{1,M\times(0,T_{max})}+\Vert u_j\Vert_{1,M\times(0,T_{max})})
\end{align*}
Since $T\leq T_{max}$ is arbitrary, therefore, Lemma $\ref{implication}$ implies $u_j\in L_{p'}(M\times(0,T_{max}))$. So for this case, $r=\frac{n+2+\epsilon}{n+1+\epsilon}$.

Case 2: Suppose $q>1$ such that $u_j\in L_q(\Omega\times(0,T_{max}))$ and $u_j,v_i\in L_q(M\times(0,T_{max}))$. \\
Recall $p>1$, $0\leq \tilde\vartheta\in L_{p}{(M\times(0,T_{max}))}$ with $ {\Vert \tilde\vartheta\parallel}_{p,(M\times(0,T_{max}))}=1$ and  $0\leq\vartheta\in L_{p}{(\Omega\times(0,T_{max}))}$ with $ {\Vert \vartheta\parallel}_{p,(\Omega\times(0,T_{max}))}=1$. Also $p'=\frac{p}{p-1}$, $q'=\frac{q}{q-1}$.
 Note $T\leq T_{max}$ is arbitrary. Applying H\"older's inequality  in $(\ref{ineq})$ and using Lemma $\ref{flat}$, yields  
\begin{align*}
\Vert u_j\Vert_{p',\Omega\times(0,T_{max})}&+\Vert v_i\Vert_{p',M\times(0,T_{max})}\\&\leq C_{p,T_{max}}(\Vert u_j\Vert_{q,\Omega\times(0,T_{max})}+\Vert v_i\Vert_{q,M\times(0,T_{max})}+\Vert u_j\Vert_{q,M\times(0,T_{max})})
\end{align*}
provided $p'\leq \frac{(n+2)q}{n+1}$. So, in this case, $r=\frac{(n+2)}{n+1}$.

Now, by repeating the above argument for $rq$ instead of $q$, we get  $v_i\in L_{r^{m}q}(M\times(0,T_{max}))$, $u_j\in L_{r^{m}q}(\Omega\times(0,T_{max}))$,  for all $m>1$. As $r>1$, $\lim \limits_{m\rightarrow\infty}{r^{m}q}\rightarrow\infty$, and as a result, $v_i\in L_{p}(M\times(0,T_{max}))$ for all $p>1$. Hence from Lemma $\ref{implication}$, $u_j\in L_{p}(M\times(0,T_{max}))$ and $u_j\in L_{p}(\Omega\times(0,T_{max}))$ for all $p>1$, and there exists $C_{p,T}>0$ such that  
\begin{align*}
\Vert u_j\Vert_{p,\Omega\times(0,T)}+\Vert v_i\Vert_{p,M\times(0,T)}&\leq C_{p,T}\left(\Vert u_j\Vert_{q,M\times(0,T_{max})}+\Vert u_j\Vert_{q,\Omega\times(0,T_{max})}+\Vert v_i\Vert_{q,M\times(0,T_{max})}\right)
\end{align*} Again as $T\leq T_{max}$ is arbitrary, we get
\begin{align*}
\Vert u_j\Vert_{p,\Omega\times(0,T_{max})}&+\Vert v_i\Vert_{p,M\times(0,T_{max})}\\&\leq C_{p,T_{max}}\left(\Vert u_j\Vert_{1,M\times(0,T_{max})}+\Vert u_j\Vert_{1,\Omega\times(0,T_{max})}+\Vert v_i\Vert_{1,M\times(0,T_{max})}\right)
\end{align*}
\end{proof}

\subsection{Global Existence}
\quad\\

{\bf Proof of Theorem $\ref{great}$:} From Theorem $\ref{lo}$ and Corollary $\ref{needco}$, we already have a componentwise nonnegative, unique, maximal solution of $(\ref{sy5})$. If $T_{\max}=\infty$, then we are done. So, by way of contradiction assume $T_{\max}<\infty$. From Lemma $\ref{global_time1}$, we have $L_p$ estimates for our solution for all $p\geq 1$, on $\Omega\times(0,T_{\max})$ and $M\times(0,T_{\max})$. We know from $(V_{i,j}2)$ and $(V_{i,j}3)$ that $F_j$ and $G_i$ are polynomially bounded above for each $i$ and $j$. Let $U$ and $V$ solve
\begin{align}\label{comp1} U_t&=d_j \Delta U+\beta(u_j+1)&
( x,t)\in \Omega\times(0,T_{max})
\nonumber\\V_t&=\tilde d_i\Delta_M V+K_f(u_j+v_i+1)^l& (x,t)\in M\times(0,T_{max})\nonumber\\ d_j\frac{\partial U}{\partial \eta}&=K_g(u_j+v_i+1)& (x,t)\in M\times(0,T_{max})\\
U&=U_0& x\in\Omega ,\quad t=0\nonumber\\V&=V_0&x\in M ,\quad t=0\nonumber\end{align}
Here, $d_j$ and $\tilde d_i$ are the $j$th and $i$th column entry of diagonal matrix $D$ and $\tilde D$ respectively. Also, $U_0$ and $V_0$ satisfy the compatibility condition, are component-wise nonnegative functions, and $(u_0)_j\leq U_0$ and $(v_0)_i\leq V_0$. For all $q\geq 1$, $K_f(u_j+v_i+1)^l$ and $K_g(u_j+v_i+1)$ lie in $L_q(M\times(0,T_{max}))$. Using Theorem $\ref{n}$, the solution of $(\ref{comp1})$ is sup norm bounded. Therefore, by the Maximum Principle \cite{RefWorks:57}, the solution of  $(\ref{sy5})$ is bounded for finite time. Therefore Theorem $\ref{lo}$ implies $T_{max}=\infty$. $\square$

				

\section{Examples and an Open Question}
In this section we give some examples to support our theory.\\
\begin{example}
As described in \cite{RefWorks:142}, during bacterial cytokinesis, a proteinaceous contractile, called the $Z$ ring assembles in the cell middle. The $Z$ ring moves to the membrane and contracts, when triggered, to form two identical daughter cells. Positiong the $Z$ ring in the middle of the cell involves two independent processes, referred to as Min system inhibition and nucleoid occlusion (\cite{RefWorks:140}, \cite{RefWorks:141} Sun and Margolin 2001). In this example, we only discuss the Min system inhibits process. The Min system involves proteins MinC, MinD and MinE (\cite{RefWorks:144} Raskin and de Boer 1999). MinC inhibits $Z$ ring assembly while the action of MinD and MinE serve to exclude MinC from the middle of cell region. This promotes the assembly of the $Z$ ring at the middle of the cell. In \cite{RefWorks:142} the authors considered the Min subsystem involving 6 chemical reactions and 5 components, under specific rates and parameters and performed a numerical investigation using a finite volume method on a one dimensional mathematical model.  Table 7.1 shows the assumed chemical reactions. The model was developed in \cite{RefWorks:142} within the context of a cylindrical cell consisting of 2 subsystems; one involving Min oscillations and the other involving FtsZ reactions. The Min subsystem consists of ATP-bound cytosolic MinD, ADP-bound cytosolic MinD, membrane-bound MinD, cytosolic MinE, and membrane bound MinD:MinE complex. Those are denoted $D_{cyt}^{ATP}$, $D_{cyt}^{ADP}$, $D_{mem}^{ATP}$, $E_{cyt}$, and $E:D_{mem}^{ATP}$, respectively. This essentially constitutes the one dimensional version of the problem. These Min proteins  react with certain reaction rates that are illustrated in Table 7.1. 
\begin{table}[ht]\label{table:nonlin}
\caption{Reactions and Reaction Rates} 
\centering 
\begin{tabular}{|ccc| }
\hline\hline                       
Chemicals & Reactions & Reaction Rates\\ [0.5ex] 
\hline 
& & \\            
Min D  & $D^{ADP}_{cyt}\xrightarrow{k_{1}} D_{cyt}^{ATP}$ & $ R_{exc}=k_{1}[D_{cyt}^{ADP}]$ \\[1ex]
Min D & $D_{cyt}^{ATP}\xrightarrow{k_{2}} D_{mem}^{ATP}$ & $R_{Dcyt}=k_{2}[D_{cyt}^{ATP}]$\\ [1ex]
&$D_{cyt}^{ATP}\xrightarrow{k_{3}[D_{mem}^{ATP}]} D_{mem}^{ATP}$ & $R_{Dmem}=k_{3}[D_{mem}^{ATP}][D_{cyt}^{ATP}]$\\ [1ex]  
Min E  &$E_{cyt}+D_{mem}^{ATP}\xrightarrow{k_{4}} E:D_{mem}^{ATP}$ &$ R_{Ecyt}=k_{4}[E_{cyt}] [D_{mem}^{ATP}]$\\ [1ex] 
& $E_{cyt}+D_{mem}^{ATP}\xrightarrow{k_{5}[E:D_{mem}^{ATP}]^2} E:D_{mem}^{ATP}$ & $R_{Emem}=k_{5}[D_{mem}^{ATP}][E_{cyt}][E:D_{mem}^{ATP}]^2$\\[1ex]
Min E & $ E:D_{mem}^{ATP} \xrightarrow{k_{6}} E+D_{cyt}^{ADP}$& $R_{exp}=k_{6}[E:D_{mem}^{ATP}]$\\[1ex]
\hline 
\end{tabular}
\end{table} These reactions lead to five component model with $(u,v)=(u_1,u_2,u_3,v_1,v_2)$, where
\[u=\begin{pmatrix}u_1\\u_2\\u_3\end{pmatrix}=\begin{pmatrix} \left[D_{cyt}^{ATP}\right]\\ \left[D_{cyt}^{ADP}\right]\\ \left[E_{cyt}\right] \end{pmatrix}, {v}=\begin{pmatrix}v_1\\v_2\end{pmatrix}=\begin{pmatrix}\left[D_{mem}^{ATP}\right]\\ \left[E:D_{mem}^{ATP}\right]\end{pmatrix} \]
\[\tilde D=\begin{pmatrix} \sigma_{Dmem} & 0 \\ 0 &  \sigma_{E:Dmem} \end{pmatrix},\quad  D=\begin{pmatrix} \sigma_{Dcyt} & 0 & 0\\ 0 &  \sigma_{ADyct} & 0\\0 & 0 & \sigma_{Ecyt} \end{pmatrix} \]\\
 \[{G(u,v)}=\begin{pmatrix}G_1(u,v)\\G_2(u,v)\\G_3(u,v)\end{pmatrix}=\begin{pmatrix}- R_{Dcyt}-R_{Dmem}\\ R_{exp}\\ R_{exp}-R_{Ecyt}-R_{Emem}\end{pmatrix}=\begin{pmatrix}- k_2 u_1-k_3v_1u_1\\ k_6v_2\\ k_6v_2-k_4u_3v_1-k_5v_1u_3{v_2}^2\end{pmatrix}, \]
 \[ {F(u,v)}=\begin{pmatrix}F_1(u,v)\\F_2(u,v)\end{pmatrix}=\begin{pmatrix} R_{Dcyt}+R_{Dmem}-R_{Ecyt}-R_{Emem}\\ -R_{exp}+R_{Ecyt}+R_{Emem}\end{pmatrix}= \begin{pmatrix}k_2u_1+k_3v_1u_1-k_4u_3v_1-k_5v_1u_3{v_2}^2\\ -k_6v_2+k_4u_3v_1+k_5v_1u_3{v_2}^2\end{pmatrix}, \] \[ {H(u)}=\begin{pmatrix}H_1(u)\\H_2(u)\\H_3(u)\end{pmatrix}=\begin{pmatrix} R_{exc}\\ -R_{exc}\\ 0\end{pmatrix}=\begin{pmatrix} k_1u_2\\ -k_1u_2\\ 0\end{pmatrix}, \] and
$ u_0=( {u_0}_j)\in W_p^{2}(\Omega)$, $v_0= ({v_0}_i)\in W_p^{2}(M)$ are componentwise nonnegative functions with $p>n$.  Also, $u_0 $  and $v_0$ satisfy the compatibility condition\[ D{\frac{ \partial {u_0}}{\partial \eta}} =G(u_0,v_0)\quad \text{on $M.$}\] Here expressions of the form $k_{\alpha}$ and $\sigma_{\beta}$ are positive constants. Note $F, G$ and $H$ are quasi positive functions. In the multidimensional setting, the concentration densities satisfy the reaction-diffusion system given by
\begin{align*}
 u_t\nonumber&=  D \Delta u+H(u)
 & x\in \Omega, \quad&0<t<T
\\\nonumber v_t&=\tilde D \Delta_{M} v+F(u,v)& x\in M,\quad& 0<t<T\\ D\frac{\partial u}{\partial \eta}&=G(u,v) & x\in M, \quad&0<t<T\\\nonumber
u&=u_0  &x\in\Omega ,\quad& t=0\\\nonumber v&=v_0 & x\in M ,\quad &t=0\end{align*}Our local existence result holds for any number of finite components. Therefore, from Theorem $\ref{lo}$, this system has a unique maximal componentwise nonnegative solution. In this example, if we take two specific components at a time, we are able to obtain $L_p$ estimates for each of the components. For that purpose we apply our results to $(u_3,v_2)$, $u_2$ and $(u_1,v_1)$. In order to prove global existence, we assume $T_{max}<\infty$. Otherwise, we are done.

Consider $(u_3,v_2)$. It is easy to see that for $j=3$ and $i=2$, the hypothesis of  Lemma $\ref {global_time1}$ is satisfied, since $G_3+F_2\leq 0$, $G_3$ is linearly bounded, and $H_3=0$. As a result, $u_3\in L_p(\Omega_{T_{max}})$ and $v_2\in L_p(M_{T_{max}})$ for all $p>1$. Using Theorem  $\ref{n}$ and the comparison principle, $u_2$ is H\"{o}lder continuous on $\Omega_{T_{\max}}$ for $p>n+1$. Finally, consider $(u_1,v_1)$. Clearly  for $j=1$ and $i=1$, the hypothesis of  Lemma $\ref {global_time1}$ is satisfied, since $G_1+F_1\leq 0$, $G_1$ is linearly bounded, and $H_1$ is bounded.  Therefore, $u_1\in L_p(\Omega_{T_{max}})$ and $v_1\in L_p(M_{T_{max}})$ for all $p>1$. 

We already have for all $1\leq i\leq 2$ and $1\leq j\leq 3$, $(u_j, v_i)\in L_p(\Omega_{T_{max}})\times L_p(M_{T_{max}})$ for all $p\geq1$. Therefore there exists $\tilde p>1$ such that $G_j\in L_{\tilde p}(\Omega_{T_{max}})$ for all $p\geq\tilde p$, and $F_i\in L_{\tilde p}(M_{T_{max}})$ for all $p\geq\tilde p$. Consequently from Theorem $\ref{n}$, the solution is bounded, which contradicts the conclusion of Theorem $\ref{lo}$. As a result, the system has a global solution.
\end{example}

\begin{example2}
Consider the model considered by R\"{a}tz and R\"{o}ger\cite{RefWorks:99} for signaling networks. They formulated a mathematical model that couples reaction-diffusion in the inner volume to a reaction-diffusion system on the membrane via a flux condition. More specifically, consider the system (3.1) with $k=1, m=2$, where \[{G(u,v)}=-q=-b_6 \frac{|B|}{|M|}u(c_{max}-v_1-v_2)_{+}+b_{-6} v_2,\quad  {H(u)}=0\]
 \[ {F(u,v)}=\begin{pmatrix}F_1(u,v)\\F_2(u,v)\end{pmatrix}=\begin{pmatrix}k_1v_2g_0\left(1-\frac{K_5v_1g_0}{1+K_5v_1}\right)+k_2v_2\frac{K_5v_1g_0}{1+K_5v_1}-k_3\frac{v_1}{v_1+k_4}\\-k_1v_2g_0\left(1-\frac{K_5v_1g_0}{1+K_5v_1}\right)-k_2v_2\frac{K_5v_1g_0}{1+K_5v_1}+k_3\frac{v_1}{v_1+k_4} +q \end{pmatrix} \]  and
$ u_0=( {u_0}_j)\in W_p^{(2)}(\Omega)$, $v_0= ({v_0}_i)\in W_p^{(2)}(M)$ with $p>n$ are componentwise nonnegative. Also, $u_0 $  and $v_0$ satisfy the compatibility condition\[ D{\frac{ \partial {u_0}}{\partial \eta}} =G(u_0,v_0)\quad \text{on $M.$}\] Here $k_\alpha, K_\alpha, g_0, c_{max}, b_{-6}$ are same positive constants as described in \cite{RefWorks:99}. We note $F, G$ and $H$ are quasi positive functions. From Theorem $\ref{lo}$, this system has a unique componentwise nonnegative maximal solution. In order to get global existence, we assume $T_{max}<\infty$. In order to obtain $L_p$ estimates for each of the components, consider $(u,v_2)$. It is easy to see that $G+F_2\leq k_3$, $H=0$, and $G$ is linearly bounded above. So the hypothesis of  Lemma $\ref {global_time1}$  is satisfied. As a result, $u\in L_p(\Omega_{T_{max}})$ and $v_2\in L_p(M_{T_{max}})$ for all $p>1$. Now $v_1$, satisfies the hypothesis of Theorem  $\ref{3}$. Therefore $v_1\in W^{2,1}_p(M_{T_{max}})$ for all $p>1$. 

We already have for all $1\leq i\leq 2$, $(u, v_i)\in L_p(\Omega_{T_{max}})\times L_p(M_{T_{max}})$ for all $p\geq1$. Therefore $G\in L_{ p}(\Omega_{T_{max}})$ for all $p\geq 1$, and $F_i\in L_{ p}(M_{T_{max}})$ for all $p\geq 1$. Consequently, from Theorem $\ref{n}$, the solution is bounded, which contradicts the conclusion of Theorem $\ref{lo}$. As a result the system has a global solution.
\end{example2}

\vspace{.1cm}
\begin{example3}
We look at a simple model to illustrate an interesting open question. Consider the system 
 \begin{align}\label{solvable}
 u_t\nonumber&= \Delta u
 & x\in \Omega, \quad&0<t<T
\\\nonumber v_t&=\Delta_{M} v+u^2 v^2& x\in M,\quad& 0<t<T\\ \frac{\partial u}{\partial \eta}&=-u^2 v^2 & x\in M, \quad&0<t<T\\\nonumber
u&=u_0  &x\in\Omega ,\quad& t=0\\\nonumber v&=v_0 & x\in M ,\quad &t=0\end{align}
where $u_0$ and $v_0$ are nonnegative and smooth, and satisfy the compatibility condition. Clearly $H(u)=0$, $G(u,v)=u^2v^2$ and $F(u,v)=-u^2v^2$ satisfy the hypothesis of Theorem 3.3 with  $F+G\leq 0$ and $G(u,v)\leq 0$. Therefore $(\ref{solvable})$ has a unique global componentwise nonnegative global solution. However, suppose we make a small change, and consider the system
\begin{align}\label{unsolvable}
 u_t\nonumber&= \Delta u
 & x\in \Omega, \quad&0<t<T
\\\nonumber v_t&=\Delta_{M} v-u^2 v^2& x\in M,\quad& 0<t<T\\ \frac{\partial u}{\partial \eta}&=u^2 v^2 & x\in M, \quad&0<t<T\\\nonumber
u&=u_0  &x\in\Omega ,\quad& t=0\\\nonumber v&=v_0 & x\in M ,\quad &t=0\end{align}
Then we can show there exists a unique maximal componentwise nonnegative solution. We can also obtain $L_1$ estimates for $u$ and $v$. Furthermore, it is easy to see that $v$ is uniformly bounded. But our theory cannot be used to determine whether $(\ref{unsolvable})$ has a global solution, and this remains an open question. More generally, it is not known whether replacing $G_j$ in condition $(V_{i,j}2)$ with $F_i$ will result in a theorem similar to Theorem 3.3.
\end{example3}



\end{document}